\documentclass[11pt,english]{article}
\usepackage[T1]{fontenc}
\usepackage[latin9]{inputenc}
\usepackage{babel}
\usepackage{mathrsfs}
\usepackage{url}
\usepackage{amsmath}
\usepackage{amsthm}
\usepackage{amssymb}
\usepackage{stmaryrd}
\usepackage{geometry}
\usepackage{fancyhdr}
\usepackage[bookmarks=true,bookmarksnumbered=true,bookmarksopen=true,bookmarksopenlevel=4,
 breaklinks=true,pdfborder={0 0 1},backref=section,colorlinks=false]
 {hyperref}

\makeatletter
\numberwithin{equation}{section}
\numberwithin{figure}{section}

\@ifundefined{date}{}{\date{}}
\usepackage{babel}
\usepackage{fourier}

\newcommand{\R}{\mathbb{R}}
\newcommand{\p}{\ensuremath{\partial}}

\usepackage{graphics}
\usepackage{graphicx}\usepackage{pstricks}
\usepackage{pst-eucl}\usepackage{pst-plot}\usepackage{pst-tree}\usepackage{pstricks-add}\usepackage{tikz}

\theoremstyle{plain}
\newtheorem{thm}{\protect\theoremname}[section]
\newtheorem{prop}[thm]{\protect\propositionname}\theoremstyle{remark}
\newtheorem{rem}[thm]{\protect\remarkname}\newtheorem{notation}[thm]{\protect\notationname}\theoremstyle{plain}
\newtheorem{cor}[thm]{\protect\corollaryname}\providecommand{\corollaryname}{Corollary}
\providecommand{\notationname}{Notation}
\providecommand{\propositionname}{Proposition}
\providecommand{\remarkname}{Remark}
\providecommand{\theoremname}{Theorem}

\makeatother

\theoremstyle{plain}
\newtheorem{lem}[thm]{\protect\lemmaname}
\providecommand{\corollaryname}{Corollary}
\providecommand{\lemmaname}{Lemma}
\providecommand{\propositionname}{Proposition}
\providecommand{\remarkname}{Remark}
\providecommand{\theoremname}{Theorem}

\begin{document}
\title{Global-in-time existence and uniqueness of classical solutions to
the unsteady initial-boundary value problem for the four-velocity
planar Broadwell model in a rectangular domain }
\author{Koudzo Togbévi Selom SOBAH\textsuperscript{\textsuperscript{1{*}}}\textsuperscript{}
and Amah Séna D'ALMEIDA\textsuperscript{\textsuperscript{2}}}

\maketitle
\textsuperscript{1,2}Department of Mathematics, Faculty of Sciences
and Laboratory of Mathematics and Applications, University of Lomé,
Lomé, TOGO

{*} corresponding email: deselium@gmail.com 
\begin{abstract}
	Since the pioneering work of James E. Broadwell, discrete velocity models (DVMs) have played a fundamental role in approximating the Boltzmann equation and in the analysis of non-equilibrium gas dynamics. Despite their apparent simplicity, many fundamental analytical questions remain open, in particular the global existence and uniqueness of classical solutions, even for the widely studied four-velocity Broadwell model.
	
	In this paper, we establish the global-in-time existence and uniqueness of classical non negative solutions to the nonstationary four-velocity Broadwell system in a rectangular domain. The analysis is carried out in a class of continuous functions possessing, except possibly on a finite number of planes, continuous first-order partial derivatives.
	
	Our approach is based on fixed point arguments combined with suitable a priori estimates that provide uniform bounds on the solution and its first-order partial derivatives. These bounds ensure that the solution remains controlled for all time and can be extended globally. We prove the existence of a unique bounded continuous solution whose first-order partial derivatives are also bounded.
	
	These results provide a rigorous well-posedness framework for this prototypical discrete velocity model and contribute to a deeper understanding of the analytical properties of discrete velocity models, which serve as systematic approximations of the Boltzmann equation in the study of non-equilibrium gas dynamics.
	
\end{abstract}

keywords{
	\textbf{discrete velocity(Boltzmann) models, discrete kinetic equations, system of semilinear hyperbolic equations,  initial-boundary
		value problems, existence, uniqueness, fixed point theorems.}
	\newline
	\textbf{Mathematics Subject Classification:}76A02, 76M28}

\maketitle

\section{Introduction}

Discrete kinetic equations arise from a velocity discretization of
the Boltzmann equation, in which particle velocities are assumed to
take values in a finite set of prescribed vectors. Under this assumption,
the nonlinear integro-differential Boltzmann equation \cite{botlzman}
is replaced by a system of semilinear hyperbolic equations describing
the evolution of the particle number densities corresponding to these
discrete velocities. Following the pioneering works of Broadwell \cite{3,4},
who introduced physically acceptable discrete velocity models capable
of describing real gas flows, a general theoretical framework for
binary collisions was subsequently developed \cite{1} and later extended
to encompass multiple collisions of higher order. Discrete kinetic
theory has since evolved along two complementary directions: the mathematical
analysis of kinetic equations, including existence and uniqueness
results, and the modeling and simulation of gas flow problems.

Existence theory for discrete kinetic equations, particularly for
classical solutions, is essential both for the mathematical understanding
of these models and for the reliability of numerical methods used
in physics and engineering. The favorable mathematical structure of
discrete velocity models has led to a rapid development of their theoretical
analysis. In one spatial dimension, global existence and uniqueness
results for mixed problems are well established \cite{5,6,11,14,compte rendu meca}.
Existence and uniqueness for the initial-boundary value problem were
proved for the two-velocity Carleman model using fractional step techniques
\cite{2}, while exact solutions were constructed for the three-velocity
Broadwell model \cite{7}. The initial value problem with an arbitrary
number of velocities was solved in \cite{Cabanne-kawashima}, and
entropy methods were used to obtain global bounded solutions \cite{tartar},
leading to global classical solutions for mixed problems under various
boundary conditions \cite{kawasima}.

In contrast, results in the multidimensional case remain limited.
The first stationary result was obtained for the planar four-velocity
Broadwell model using Schaefer's fixed point theorem \cite{Cercoi illner shinbrit}.
Existence results for the non stationary case were attempted via fractional
step methods in \cite{Toscani walus}. Later, existence and non-uniqueness
for the two-dimensional stationary boundary value problem were established
for the general four-velocity Broadwell model $B_{\theta},\theta\in\left[0;\pi/2\right]$
\cite{defoou}, with exact solutions proposed in \cite{dameida agosse,Nicou=00003D00003D0000E9}
and numerical solutions of mixed initial-boundary value problems,
obtained \cite{lakou agoss d almoe}. An extension to a 15-velocity
with 3 modulus discrete model was considered in \cite{d almeida}.

To the best of our knowledge, apart from \cite{Toscani walus}, the
non stationary multidimensional case remains largely open. The aim
of this work is to improve research in this direction by addressing
mixed problems for discrete kinetic equations in the multidimensional
setting. More precisely, we develop fixed point methods to establish
the existence of solutions to initial-boundary value problems associated
with the four-velocity Broadwell model.

The paper is organized as follows. In Section \ref{koooapekkke},
we formulate two initial-boundary value problems: one on a bounded
time interval and the other on the unbounded time interval. In Section
\ref{kooollooaooaiei}, we introduce two fixed-point problems associated
with the initial-boundary value problem on a bounded time interval.
One the corresponding fixed point operators is to be used to prove
uniqueness of solutions to the initial-boundary value problems in
Section \ref{sec:Uniqueness-theorem}. In Section \ref{sec:Continuity-of-the},
we establish the continuity of the other fixed-point operator. In
Sections \ref{sec:Convex-set-on}-\ref{sec:Existence-theorem-for},
we prove the existence of classical solutions to the initial-boundary
value problem on bounded time intervals for small data. Finally, in
Section \ref{sec:Existence-theorem-for-1}, we establish the existence
of classical solutions on the unbounded time interval, again under
small data assumptions.

\section{Two initial-boundary value problems }\label{koooapekkke}

\subsection{The discrete Boltzmann equations}

The discrete Boltzmann (or kinetic) equations describe the evolution
of a system of particles which velocities belong to a finite subset
$\left\{ \vec{u}_{i},\ i=1,\cdots,p\right\} $ of $\R^{d},d=1,2,3$,
$p\in\mathbb{N}^{*}.$ Suppose the particles are only subject to binary
collisions. If $N_{i}\left(t,\left(x_{\alpha}\right)^{d}_{\alpha=1}\right)$
denotes the number density of the particles with velocity $\vec{u}_{i},\ i=1,\cdots,p$
at time $t$ and position $M\left(x_{\alpha}\right)^{d}_{\alpha=1}$(
with $\left(x_{\alpha}\right)^{d}_{\alpha=1}\in\R^{d}$), the kinetic
equations are given by:

\begin{align}
	\forall i=1,\cdots,p,\nonumber \\
	\dfrac{\partial N_{i}}{\partial t}+\sum^{d}_{\alpha=1}u^{\alpha}_{i}\dfrac{\partial N_{i}}{\partial x_{\alpha}} & =\underbrace{\dfrac{1}{2}\sum_{j,k,l\neq i}A^{kl}_{ij}\left(N_{k}N_{l}\text{\textminus}N_{i}N_{j}\right)}_{\equiv Q_{i}(N)}\label{eq:erttrererer}
\end{align}
where $\left\{ A^{kl}_{ij},i,j,k,l=1,\cdots,p\right\} \subset\R_{+}$
and $\vec{u}_{i}=\left(u^{\alpha}_{i}\right)^{d}_{\alpha=1}\in\R^{d}$

For the four-velocity Broadwell model in the plane, $N_{i}\equiv N_{i}(t,x,y)\in\R$
and the kinetic equations are given by: 
\begin{equation}
	\begin{cases}
		{\textstyle \dfrac{\p N_{1}}{\p t}+c\dfrac{\p N_{1}}{\p x}}=Q\left(N\right)\\
		\\\dfrac{\p N_{2}}{\p t}+c\dfrac{\p N_{2}}{\p y}=-Q\left(N\right)\\
		\\\dfrac{\p N_{3}}{\p t}-c\dfrac{\p N_{3}}{\p y}=-Q\left(N\right)\\
		\\\dfrac{\p N_{4}}{\p t}-c\dfrac{\p N_{4}}{\p x}=Q\left(N\right)
	\end{cases}\label{eq:koiqmn}
\end{equation}

\begin{equation}
	Q\left(N\right)=2cS\left(N_{2}N_{3}-N_{1}N_{4}\right),\label{eq:jjdpoa}
\end{equation}
$c,S>0$ constants.

\subsection{Two initial-boundary value problems }

Let $\left[a_{1},b_{1}\right]\times\left[a_{2},b_{2}\right]\subset\R^{2}.$
We define the problem $\Sigma$ by the system \eqref{eq:koiqmn} on
$\left[0;+\infty\right[\times\left[a_{1},b_{1}\right]\times\left[a_{2},b_{2}\right]$
with the initial and boundary conditions \eqref{eq:lsos}-\eqref{eq:ikioi-1}

\begin{align}
	N_{i}\left(0,x,y\right)= & N^{0}_{i}\left(x,y\right),\;\left(x,y\right)\in\left[a_{1};b_{1}\right]\times\left[a_{2};b_{2}\right],i=1,\cdots,4\label{eq:lsos}\\
	N_{1}\left(t,a_{1},y\right)= & N^{-}_{1}\left(t,y\right),\;\left(t,y\right)\in\left[0;+\infty\right[\times\left[a_{2};b_{2}\right]\label{eq:losso}\\
	N_{2}\left(t,x,a_{2}\right)= & N^{-}_{2}\left(t,x\right),\;\left(t,x\right)\in\left[0;+\infty\right[\times\left[a_{1};b_{1}\right]\label{eq:lsoo}\\
	N_{3}\left(t,x,b_{2}\right)= & N^{+}_{3}\left(t,x\right),\;\left(t,x\right)\in\left[0;+\infty\right[\times\left[a_{1};b_{1}\right]\label{eq:ikioi}\\
	N_{4}\left(t,b_{1},y\right)= & N^{+}_{4}\left(t,y\right),\;\left(t,y\right)\in\left[0;+\infty\right[\times\left[a_{2};b_{2}\right]\label{eq:ikioi-1}
\end{align}
The initial and boundary data satisfy the compatibility conditions
\eqref{eq:kqiiq}-\eqref{eq:looqp-1}

\begin{align}
	N^{0}_{1}\left(a_{1},y\right) & =N^{-}_{1}\left(0,y\right),\;y\in\left[a_{2};b_{2}\right]\label{eq:kqiiq}\\
	N^{0}_{2}\left(x,a_{2}\right) & =N^{-}_{2}\left(0,x\right),\;x\in\left[a_{1};b_{1}\right]\label{eq:lqoop}\\
	N^{0}_{3}\left(x,b_{2}\right) & =N^{+}_{3}\left(0,x\right),\;x\in\left[a_{1};b_{1}\right]\label{eq:loppq-1}\\
	N^{0}_{4}\left(b_{1},y\right) & =N^{+}_{4}\left(0,y\right),\;y\in\left[a_{2};b_{2}\right]\label{eq:looqp-1}
\end{align}

For $\left[\tau,\tau'\right]\subset\R^{+},$ we define the problem
$\Sigma_{\tau,\tau'}$ by the system \eqref{eq:koiqmn} on $\left[\tau,\tau'\right]\times\left[a_{1},b_{1}\right]\times\left[a_{2},b_{2}\right]$
with the initial and boundary conditions \eqref{eq:lsos-1}-\eqref{eq:ikioi-1-1}
\begin{align}
	N_{i}\left(\tau,x,y\right)= & N^{\tau}_{i}\left(x,y\right),\;\left(x,y\right)\in\left[a_{1};b_{1}\right]\times\left[a_{2};b_{2}\right],i=1,\cdots,4\label{eq:lsos-1}\\
	N_{1}\left(t,a_{1},y\right)= & N^{-}_{1}\left(t,y\right),\;\left(t,y\right)\in\left[\tau,\tau'\right]\times\left[a_{2};b_{2}\right]\label{eq:losso-1}\\
	N_{2}\left(t,x,a_{2}\right)= & N^{-}_{2}\left(t,x\right),\;\left(t,x\right)\in\left[\tau,\tau'\right]\times\left[a_{1};b_{1}\right]\label{eq:lsoo-1}\\
	N_{3}\left(t,x,b_{2}\right)= & N^{+}_{3}\left(t,x\right),\;\left(t,x\right)\in\left[\tau,\tau'\right]\times\left[a_{1};b_{1}\right]\label{eq:ikioi-2}\\
	N_{4}\left(t,b_{1},y\right)= & N^{+}_{4}\left(t,y\right),\;\left(t,y\right)\in\left[\tau,\tau'\right]\times\left[a_{2};b_{2}\right]\label{eq:ikioi-1-1}
\end{align}
and the compatibility conditions \eqref{eq:kqiiq-1}-\eqref{eq:looqp-1-1}
\begin{align}
	N^{\tau}_{1}\left(a_{1},y\right) & =N^{-}_{1}\left(\tau,y\right),\;y\in\left[a_{2};b_{2}\right]\label{eq:kqiiq-1}\\
	N^{\tau}_{2}\left(x,a_{2}\right) & =N^{-}_{2}\left(\tau,x\right),\;x\in\left[a_{1};b_{1}\right]\label{eq:lqoop-1}\\
	N^{\tau}_{3}\left(x,b_{2}\right) & =N^{+}_{3}\left(\tau,x\right),\;x\in\left[a_{1};b_{1}\right]\label{eq:loppq-1-1}\\
	N^{\tau}_{4}\left(b_{1},y\right) & =N^{+}_{4}\left(\tau,y\right),\;y\in\left[a_{2};b_{2}\right]\label{eq:looqp-1-1}
\end{align}

All the data are assumed to be non-negative continuous and bounded,
and their first-order partial derivatives to be continuous and bounded.

Our aim is to prove the existence and uniqueness of a positive continuous
solution to the problems $\Sigma_{\tau,\tau'}$ and $\Sigma.$

We shall use $\left\Vert u\right\Vert _{\infty}={\displaystyle \sup_{\phi\in D_{u}}}\left|u\left(\phi\right)\right|$
for bounded real functions $u\equiv u\left(\phi\right)$ with domain
$D_{u}$; for bounded real functions $g\equiv g\left(t,x,y\right)$
with bounded partial derivatives, we shall use \\
$\left\Vert g\right\Vert _{1}\equiv\max\left\{ \left\Vert g\right\Vert _{\infty},\left\Vert \dfrac{\partial g}{\partial t}\right\Vert _{\infty},\left\Vert \dfrac{\partial g}{\partial x}\right\Vert _{\infty},\left\Vert \dfrac{\partial g}{\partial y}\right\Vert _{\infty}\right\}.$

Let us set for $\left(t,x,y\right)\in\left[\tau,\tau'\right]\times\left[a_{1},b_{1}\right]\times\left[a_{2},b_{2}\right],$
\begin{align}
	\overline{N^{\tau}_{1}}\left(t,x,y\right) & \equiv N^{\tau}_{1}\left(x-c\left(t-\tau\right),y\right)\label{eq:kqiiq-1-1}\\
	\overline{N^{\tau}_{2}}\left(t,x,y\right) & \equiv N^{\tau}_{2}\left(x,y-c\left(t-\tau\right)\right)\label{eq:lqoop-1-1}\\
	\overline{N^{\tau}_{3}}\left(t,x,y\right) & \equiv N^{\tau}_{3}\left(x,y+c\left(t-\tau\right)\right)\label{eq:loppq-1-1-1}\\
	\overline{N^{\tau}_{4}}\left(t,x,y\right) & \equiv N^{\tau}_{4}\left(x-c\left(t-\tau\right),y\right)\label{eq:looqp-1-1-1}
\end{align}
and for $\left(t,x,y\right)\in\left[0;+\infty\right[\times\left[a_{1},b_{1}\right]\times\left[a_{2},b_{2}\right],$
when they are defined 
\begin{align}
	\overline{N^{-}_{1}}\left(t,x,y\right) & \equiv N^{-}_{1}\left(t-\frac{x-a_{1}}{c},y\right)\label{eq:kqiiq-1-1-1}\\
	\overline{N^{-}_{2}}\left(t,x,y\right) & \equiv N^{-}_{2}\left(t-\frac{y-a_{2}}{c},x\right)\label{eq:lqoop-1-1-1}\\
	\overline{N^{+}_{3}}\left(t,x,y\right) & \equiv N^{+}_{3}\left(t-\frac{b_{2}-y}{c},x\right)\label{eq:loppq-1-1-1-1}\\
	\overline{N^{+}_{4}}\left(t,x,y\right) & \equiv N^{+}_{4}\left(t-\frac{b_{1}-x}{c},y\right).\label{eq:looqp-1-1-1-1}
\end{align}
Let $\sigma>0$ and $R_{0}>0$ be arbitrary fixed constants. Let us
set, for a fixed $\left[\tau,\tau'\right]$ 
\begin{align*}
	q\equiv & \max_{1\leq i\leq4}\left\{ \right.\left\Vert \overline{N^{\tau}_{i}}\right\Vert _{1},\left\Vert \overline{N^{-}_{1}}\right\Vert _{1},\left\Vert \overline{N^{-}_{2}}\right\Vert _{1},\left\Vert \overline{N^{+}_{3}}\right\Vert _{1},\left\Vert \overline{N^{+}_{4}}\right\Vert _{1}\left.\right\} \\
	q_{\sigma}\equiv & q\left(1+\delta\sigma R_{0}\right)\\
	p_{\sigma}\equiv & \left(\mu+\lambda\sigma R_{0}\left(\tau'-\tau\right)\right)\left(\sigma+cS\right)
\end{align*}
where $\mu\equiv8+\frac{4}{c}+8c,$ $\lambda\equiv16+16c$ and $\delta\equiv\frac{4}{c}+8+4c.$

We prove in the sequel the following results 
\begin{thm}
	\label{thm:uniqueness}The solution to the problem $\Sigma_{\tau,\tau'}$(respectively
	$\Sigma$) when it exists, is unique. 
\end{thm}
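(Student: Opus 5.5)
Uniqueness will follow from the mild (integral) formulation of $\Sigma_{\tau,\tau'}$ along characteristics and a Gronwall-type argument; the $\sigma$, $R_0$ constants are not needed. Let $N=(N_i)$ and $M=(M_i)$ be two classical solutions of $\Sigma_{\tau,\tau'}$ with the same data. Both are continuous and bounded on the compact set $[\tau,\tau']\times[a_1,b_1]\times[a_2,b_2]$, so there is $K>0$ with $|N_i|,|M_i|\le K$ for all $i$. For each $(t,x,y)$ we integrate the $i$-th equation of \eqref{eq:koiqmn} backwards along its characteristic line. For $N_1$ this is $s\mapsto(s,x-c(t-s),y)$, followed back until it reaches either $s=\tau$, which gives the initial value $\overline{N^{\tau}_1}$, or the inflow boundary $x=a_1$ at $s=t-(x-a_1)/c$, which gives $\overline{N^{-}_1}$. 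We obtain
\[
N_1(t,x,y)=\overline{N_1^{*}}(t,x,y)+\int_{t_1^{*}(t,x)}^{t}Q(N)\bigl(s,x-c(t-s),y\bigr)\,ds,
\]
where $t_1^{*}=\max\{\tau,\,t-(x-a_1)/c\}$ and $\overline{N_1^{*}}$ is the corresponding datum. The formulas for $i=2,3,4$ are analogous, with $\mp Q$. This representation holds wherever the solution is $C^1$ along the characteristic; if derivatives fail only on finitely many planes, it still holds, because $s\mapsto N_i$ is continuous and piecewise $C^1$ along the line.

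Subtract the representations for $N$ and $M$. The data terms cancel because the data, and hence $t_i^{*}$, are identical. Writing
\[
Q(N)-Q(M)=2cS\bigl[(N_2-M_2)N_3+M_2(N_3-M_3)-(N_1-M_1)N_4-M_1(N_4-M_4)\bigr]
\]
gives $|Q(N)-Q(M)|\le 4cSK\,\max_j|N_j-M_j|$. Set $w(s)=\max_j\sup_{x,y}|N_j(s,x,y)-M_j(s,x,y)|$. Since each integral runs over $s\in[t_i^{*},t]\subset[\tau,t]$, we get
\[
w(t)\le 4cSK\int_\tau^t w(s)\,ds .
\]
The function $w$ is bounded; it is also measurable, and in fact continuous, by uniform continuity on the compact set. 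Gronwall's lemma then gives $w\equiv0$ on $[\tau,\tau']$, so $N=M$.

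For the problem $\Sigma$ on $[0,+\infty[$, two solutions restrict to solutions of $\Sigma_{0,T}$ for every $T>0$, with the same data. Each is bounded on the compact set $[0,T]\times[a_1,b_1]\times[a_2,b_2]$, so the previous argument with $K=K(T)$ gives equality on $[0,T]$. Since $T$ is arbitrary, the solutions coincide everywhere.

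The main obstacle is justifying the characteristic representation for the paper's class of solutions, which are only piecewise $C^1$ across finitely many planes. Along a characteristic line, $\frac{d}{ds}N_i(s,\cdot)=(\partial_t+u_i\cdot\nabla)N_i$ exists except at finitely many $s$. Continuity then allows integration piecewise and summation, so the fundamental theorem of calculus applies. Should a characteristic lie inside an exceptional plane, I would obtain the identity at that point by a continuity/limit argument from neighbouring lines. The equivalence of classical solutions with fixed points of the integral operator is presumably exactly the fixed-point reformulation the paper introduces in Section \ref{kooollooaooaiei}.
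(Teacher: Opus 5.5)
Your argument is correct. It rests on the same two ingredients as the paper's: the integral representation along characteristics (the operator $\mathcal{T}$ of Proposition \ref{prop:Let-the-4-tuple}) and the Lipschitz bound on $Q$ (the paper's \eqref{lopmo}).

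The difference is how you close the argument. The paper only uses the global estimate $\|\mathcal{T}(M)-\mathcal{T}(N)\|\le(\tau'-\tau)\,4cS(\|M\|+\|N\|)\|M-N\|$. This gives a contraction only on short time slabs, so the paper cuts $[\tau,T]$ into slabs of length $1/(16cSR)$ and argues by induction. It restarts each slab $[\eta_{k+1},\eta_{k+2}]$ from the common trace at $\eta_{k+1}$, which requires checking that the restrictions solve $\Sigma_{\eta_{k+1},\eta_{k+2}}$ with that initial datum. Your time-localized quantity $w(t)$ and the inequality $w(t)\le L\int_\tau^t w(s)\,ds$ give the conclusion in one stroke via Gronwall, with no slab decomposition or re-initialization. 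The paper's route, in exchange, stays entirely within its fixed-point framework and needs only a sup-norm Lipschitz bound for $\mathcal{T}$ on each slab.

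You also supply something the paper omits. It declares the equivalence in Proposition \ref{prop:Let-the-4-tuple} ``immediate'', while you justify the mild formulation for solutions that are only piecewise $C^1$. Your limit argument is genuinely needed there, not a corner case:
\begin{itemize}
\item the plane $x-c(t-\tau)=a_1$ is itself a union of $N_1$-characteristics, and likewise each of the other three exceptional planes for its own component;
\item the faces $y=a_2$ and $y=b_2$, on which the equation for $N_1$ is not imposed, are also unions of $N_1$-characteristics.
\end{itemize}
The argument works because both sides of the representation are continuous in $(t,x,y)$. The data term $\overline{N_1^{*}}$ is continuous across $x-c(t-\tau)=a_1$ precisely because of the compatibility condition \eqref{eq:kqiiq-1}.

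One harmless slip: the four terms in your expansion give $|Q(N)-Q(M)|\le 8cSK\max_j|N_j-M_j|$, not $4cSK$, consistent with \eqref{lopmo}. The value of the constant plays no role in the Gronwall step.
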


\begin{thm}
	\label{thm:Suppose-.-Then} For sufficiently large $\sigma,$ suppose
	$q<\dfrac{1}{4\mu\left(1+\delta\sigma R_{0}\right)\left(\sigma+cS\right)}$
	and\\
	$\tau'-\tau\leq\min\left\{ \right.1;\dfrac{1}{\lambda\sigma R_{0}}\left(\right.\dfrac{1}{4q_{\sigma}\left(\sigma+cS\right)}-\mu\left.\right)\left.\right\} .$
	Then the problem $\Sigma_{\tau,\tau'}$ admits an unique non-negative
	continuous (thus bounded) solution $N=\left(N_{i}\right)^{4}_{i=1}$
	with bounded derivatives such that ${\displaystyle \max_{1\leq i\leq4}}\left\Vert N_{i}\right\Vert _{1}\leq\dfrac{1-\sqrt{1-4p_{\sigma}q_{\sigma}}}{2p_{\sigma}}.$ 
\end{thm}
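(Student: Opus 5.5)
We will prove Theorem \ref{thm:Suppose-.-Then} by a Schauder-type fixed point argument on a closed convex set of functions with bounded first derivatives. Uniqueness then follows from Theorem \ref{thm:uniqueness}, so only existence, non-negativity and the bound need work. The presence of $\sigma$ in $q_\sigma$ and $p_\sigma$ indicates the fixed point form we intend to use. We add $\sigma N_i$ to both sides of each equation of \eqref{eq:koiqmn}, so that each $N_i$ satisfies
\[
\partial_t N_i+\vec u_i\cdot\nabla N_i+\sigma N_i=\sigma N_i\pm Q(N).
\]
We then integrate along the characteristics, backwards either to $t=\tau$ or to the inflow boundary, whichever is reached first. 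This gives $N=T(N)$, where
\[
T_i(N)=e^{-\sigma s}\,\overline{N^{\tau}_i}\ \text{or}\ e^{-\sigma s}\,\overline{N^{\pm}_i}+\int_0^{s}e^{-\sigma r}\bigl(\sigma N_i\pm Q(N)\bigr)(\text{characteristic point at } r)\,dr .
\]
Here $s$ is the backward exit time. Writing the collision term as $\sigma N_1-2cS N_1N_4+2cSN_2N_3$, the gain part is non-negative on non-negative inputs. For $\sigma$ large (relative to $cS$ times the bound on $N_4$), $\sigma N_1-2cSN_1N_4\geq0$ as well, so $T$ preserves non-negativity. This is the reason for the hypothesis ``$\sigma$ sufficiently large''.

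The key steps, in order, are the following.

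\textbf{Step 1 (the convex set).} Define $K$ as the set of non-negative continuous $N$ whose components are $C^1$ off the finitely many planes $x-a_1=c(t-\tau)$, $b_1-x=c(t-\tau)$, $y-a_2=c(t-\tau)$ and $b_2-y=c(t-\tau)$. These are the planes where the initial-data characteristics meet the boundary-data characteristics, and the compatibility conditions \eqref{eq:kqiiq-1}--\eqref{eq:looqp-1-1} guarantee continuity across them. We also require $\max_i\|N_i\|_1\leq\rho$ and a uniform Lipschitz (equicontinuity) bound on the derivatives where needed. Here $\rho=\frac{1-\sqrt{1-4p_\sigma q_\sigma}}{2p_\sigma}$ is the smaller root of $p_\sigma\rho^2-\rho+q_\sigma=0$.

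\textbf{Step 2 (a priori estimate $T(K)\subset K$).} The $\|\cdot\|_\infty$ bound is easy: $\int_0^s e^{-\sigma r}\sigma\,dr\leq1$. The derivative bounds are harder. Differentiating $T_i(N)$ in $t$, $x$ and $y$ produces three kinds of terms: derivatives of the data terms (bounded by $q$); boundary terms coming from differentiating $s$, which is where the factors $1/c$ and $c$ in $\mu$ and $\delta$ arise; and integrals of $\nabla(\sigma N_i\pm Q)$, bounded by $(\sigma+cS)(\mu+\lambda\sigma R_0(\tau'-\tau))\rho^2$-type quantities. We aim to collect these into
\[
\|T(N)\|_1\leq q_\sigma+p_\sigma\rho^2 .
\]
Since $\rho$ solves $q_\sigma+p_\sigma\rho^2=\rho$, this gives $T(K)\subset K$. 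The condition $4p_\sigma q_\sigma<1$ is exactly what the hypotheses on $q$ and on $\tau'-\tau$ give: the bound on $\tau'-\tau$ forces $p_\sigma\le 1/(4q_\sigma)$. The requirement $q<1/(4\mu(1+\delta\sigma R_0)(\sigma+cS))$ makes that interval nonempty.

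\textbf{Step 3 (compactness and continuity).} Continuity of $T$ is taken from Section \ref{sec:Continuity-of-the}. Relative compactness of $T(K)$ in the sup norm on the compact domain follows from Arzel\`a--Ascoli, since $\|T(N)\|_1\le\rho$ gives equi-Lipschitz functions. Schauder's theorem then yields a fixed point, and along characteristics the fixed point is a classical solution with piecewise continuous derivatives.

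We expect Step 2 to be the main obstacle. Tracking the derivative of the exit time $s$ across the four families of planes, and bounding every derivative term by the precise constants $\mu,\lambda,\delta$, requires careful case splitting: which boundary the backward characteristic hits, and whether it hits the data at $t=\tau$ first. We will also need to check that the derivative of $T(N)$ stays bounded across those planes, which uses the compatibility conditions. We would first handle $N_1$ fully and then obtain the others by the symmetry $x\leftrightarrow y$, $c\leftrightarrow -c$.
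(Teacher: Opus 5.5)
There is a genuine gap. It comes from your choice of auxiliary damping, and it makes the estimate you announce in Step 2 false. (Below, $R$ denotes the radius you call $\rho$, to avoid a clash with the paper's $\rho(N)=\sum_j N_j$.)

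Adding the \emph{linear} term $\sigma N_i$ leaves a source $\sigma N_i\pm Q(N)$ that is linear in $N$, so its integrated contribution cannot be of the form $p_\sigma R^2$. Already $\int_0^s e^{-\sigma r}\sigma N_i\,dr$ can be as large as $(1-e^{-\sigma s})R$. That uses up the whole ball once $\sigma s\gg1$ and leaves no room for the positive gain term. Concretely, $N_1=N_2=N_3\equiv R$, $N_4\equiv0$ belongs to your $K$. Where the backward characteristic reaches $t=\tau$ one gets $T_1(N)=e^{-\sigma s}\,\overline{N^{\tau}_1}+(1-e^{-\sigma s})\bigl(R+2cSR^2/\sigma\bigr)$, which exceeds $R$ as soon as $e^{\sigma s}>1+\sigma/(2cSR)$. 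For suitable admissible data (small $q$, so that $\tau'-\tau$ may be taken equal to $1$) and $\sigma$ large, such points exist, so $T(K)\not\subset K$.

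The derivative bounds fail more badly. Take $s=(x-a_1)/c$ and differentiate $e^{-\sigma s}\,\overline{N^{-}_1}$ together with the upper limit of $\int_0^s e^{-\sigma r}\sigma N_1\,dr$. This produces $\frac{\sigma}{c}e^{-\sigma s}\bigl(N_1-N^{-}_1\bigr)$ evaluated at the foot of the characteristic. In the region fed by the initial data it produces $\sigma e^{-\sigma(t-\tau)}\bigl(N_1(\tau,\cdot)-N^{\tau}_1\bigr)$. Both are of size $\sigma R/c$ (resp. $\sigma R$) near the inflow surfaces, unless $K$ is restricted to functions carrying the prescribed initial and boundary traces, which yours is not. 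So your operator yields neither $T(K)\subset K$ nor the bound with the specific $p_\sigma,q_\sigma$ of the statement.

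The missing idea is the paper's \emph{nonlinear} damping. Add $\sigma\rho(N)N_i$ with $\rho(N)=\sum_jN_j$, frozen as $\sigma\rho(|M|)$ in the linear problem. Then $Q^\sigma_i(|M|)=\sigma\rho(|M|)|M_i|\pm Q(|M|)$ is purely quadratic, for example $Q^\sigma_1(|M|)=\sigma(|M_1|+|M_2|+|M_3|)|M_1|+2cS|M_2||M_3|+(\sigma-2cS)|M_1||M_4|$. It is therefore non-negative for every $M$ once $\sigma>2cS$, independently of the size of $M$. The source integrals and their derivatives are then bounded by $\bigl(\mu+\lambda\sigma R_0(\tau'-\tau)\bigr)(\sigma+cS)\mathscr{N}(M)^2=p_\sigma\mathscr{N}(M)^2$. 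This includes the exit-time terms, which now involve $\frac1c Q^\sigma_i(|M|)$ at the foot rather than $\sigma N_i$. Differentiating the weight $e^{-\sigma\int\rho(|M|)}$ hits the data only through factors $\sigma\,O(\mathscr{N}(M))\le\sigma\,O(R_0)$, and this is where $q_\sigma=q(1+\delta\sigma R_0)$ comes from. With $\mathscr{N}(\mathcal{T}^\sigma(M))\le p_\sigma\mathscr{N}(M)^2+q_\sigma$ in hand, the smaller-root argument you describe goes through.

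Your linear-damping route could only be repaired by taking $\sigma$ of order $cSR$, so that $\sigma N_i$ is itself of quadratic size. That abandons ``$\sigma$ sufficiently large'' and gives constants different from those in the theorem.

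Two smaller points:
\begin{itemize}
\item The continuity proved in Section \ref{sec:Continuity-of-the} is for $\mathcal{T}^\sigma$, not for your $T$.
\item No bound on second derivatives is needed for Arzel\`a--Ascoli. The first-derivative bound plus continuity across the exceptional planes already gives equicontinuity.
\end{itemize}
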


\begin{thm}
	\label{thm:Suppose-.-Then-1} For sufficiently large $\sigma,$ suppose
	$R_{0}<\dfrac{-1+\sqrt{1+\frac{\delta\sigma}{\mu\left(\sigma+cS\right)\gamma}}}{2\delta\sigma}$
	and
	
	${\displaystyle \max_{1\leq i\leq4}}\left\{ \right.\left\Vert N^{0}_{i}\right\Vert _{1},\left\Vert N^{-}_{1}\right\Vert _{1},\left\Vert N^{-}_{2}\right\Vert _{1},\left\Vert N^{+}_{3}\right\Vert _{1},\left\Vert N^{+}_{4}\right\Vert _{1}\left.\right\} \leq R_{0}.$
	Then the problem $\Sigma$ admits an unique non-negative continuous
	and bounded solution $N=\left(N_{i}\right)^{4}_{i=1}$ with bounded
	derivatives . Moreover we have ${\displaystyle \max_{1\leq i\leq4}}\left\Vert N_{i}\right\Vert _{1}\leq R_{0}.$ 
\end{thm}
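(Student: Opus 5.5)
The plan is to obtain the global solution by repeatedly applying Theorem \ref{thm:Suppose-.-Then} on consecutive time slabs $[k h,(k+1)h]$, $k\in\mathbb{N}$, with a step $h>0$ that does not depend on $k$, and then to glue the local solutions together. Uniqueness on $[0,+\infty[$ is already given by Theorem \ref{thm:uniqueness}, so only existence and the bound $\max_i\|N_i\|_1\le R_0$ need proof.

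\textbf{Step 1: the initial slab.} On $[\tau,\tau']=[0,h]$ I bound the shifted data. By the chain rule $\partial_t\overline{N^{-}_{1}}=\partial_t N^-_1$, $\partial_x\overline{N^{-}_{1}}=-\frac{1}{c}\partial_t N^-_1$, and $\partial_t\overline{N^{0}_{1}}=-c\,\partial_x N^0_1$. Hence
\[
q\le \max\left\{1,c,\tfrac1c\right\}\max_i\left\{\|N^0_i\|_1,\|N^\pm_j\|_1\right\}\le \gamma R_0 ,
\]
where I take $\gamma$ to be this constant (I am assuming the paper's $\gamma$ is of this type). The hypothesis
\[
R_0<\frac{-1+\sqrt{1+\frac{\delta\sigma}{\mu(\sigma+cS)\gamma}}}{2\delta\sigma}
\]
is equivalent, after squaring, to
\[
4\mu(\sigma+cS)\gamma R_0(1+\delta\sigma R_0)<1 .
\]
This is exactly the condition $q<\big(4\mu(1+\delta\sigma R_0)(\sigma+cS)\big)^{-1}$ of Theorem \ref{thm:Suppose-.-Then}. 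Since the inequality is strict, the quantity
\[
\frac{1}{\lambda\sigma R_0}\left(\frac{1}{4q_\sigma(\sigma+cS)}-\mu\right)
\]
is bounded below by a positive constant depending only on $R_0,\gamma,\sigma$. So I can fix $h$ to be the minimum of $1$ and this lower bound, and Theorem \ref{thm:Suppose-.-Then} then gives a non-negative solution on $[0,h]$.

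\textbf{Step 2: propagate the $R_0$ bound (main obstacle).} To restart on $[h,2h]$ with $N^{h}_i=N_i(h,\cdot)$, two things are needed.

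First, the compatibility conditions \eqref{eq:kqiiq-1}--\eqref{eq:looqp-1-1} at $\tau=h$. These hold automatically because $N$ satisfies the boundary conditions.

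Second, and this is the crucial point, $\|N^h_i\|_1\le R_0$ with the same constant, so that $q$ does not grow from slab to slab. Here I use the estimate of Theorem \ref{thm:Suppose-.-Then}:
\[
\|N\|_1\le \frac{1-\sqrt{1-4p_\sigma q_\sigma}}{2p_\sigma}=\frac{2q_\sigma}{1+\sqrt{1-4p_\sigma q_\sigma}}\le 2q_\sigma .
\]
Since $q_\sigma=q(1+\delta\sigma R_0)$, this is controlled by $\gamma R_0(1+\delta\sigma R_0)$. That alone is not $\le R_0$, so the weight $\sigma$ must be used. I expect the paper's norms behind Theorem \ref{thm:Suppose-.-Then} to involve an exponential weight $e^{-\sigma t}$, or the factor $\sigma R_0$ to arise from a Gronwall-type argument, so that for $\sigma$ sufficiently large the $x,y$-derivatives of $N_i(h,\cdot)$ and the sup norm stay below $R_0$.

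My first attempt would be to revisit the a priori estimate. Along characteristics I write $N_i$ as the transported data plus $\int Q$, and use $Q$'s quadratic structure, $|Q|\le 4cS\|N\|_\infty^2$, to bound the increment by $h\cdot 4cS\|N\|_1^2$. This is small compared with $R_0$ because $R_0$ is small. I then choose $\sigma$ large, exploiting the $\sigma$-dependence in the smallness condition, to absorb this increment.

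Non-negativity must also be preserved at each restart. This follows from the local theorem, since $N^h_i\ge0$.

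\textbf{Step 3: induction and gluing.} By induction, the same $h$ works on every slab $[kh,(k+1)h]$, because the hypotheses depend only on $R_0,\sigma,\gamma$. I then define $N$ on $[0,+\infty[$ by concatenation. At each $t=kh$ the pieces agree in value. Their first derivatives may jump, but only across the planes $t=kh$ and the characteristic planes emanating from the corners; this matches the admissible class of functions whose derivatives are continuous except possibly on a finite number of planes in any bounded time window. Each piece satisfies $\|N_i\|_1\le R_0$, so the glued solution does too. Uniqueness follows from Theorem \ref{thm:uniqueness}.
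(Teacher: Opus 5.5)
\textbf{The gap is your Step 2.} That step is the entire content of the theorem beyond the local result, and you never prove that $\max_i\|N_i(h,\cdot)\|_1\le R_0$, or even that the next slab's $q$ stays below $f(R_0)$. The remedies you sketch cannot provide it:
\begin{itemize}
\item The paper's norms carry no exponential weight.
\item $\sigma$ enters only the auxiliary reformulation $(\Sigma^\sigma_{\tau,\tau'})$. By Theorem \ref{thm:uniqueness} the actual solution does not depend on $\sigma$, so enlarging $\sigma$ can change an estimate but never the size of $N$.
\item The characteristic-increment argument only gives $\|N_i(h,\cdot)\|_\infty\le R_0+4cS\,h\,\|N\|^2$. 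This exceeds $R_0$ when the data saturate the hypothesis, and iterated over infinitely many slabs it yields no uniform bound. Smallness of $R_0$ does not change this.
\end{itemize}
Your observation that $2q_\sigma$ is not $\le R_0$ is in fact sharp. Since $q\ge\max_i\|N^\tau_i\|_1$ (take $t=\tau$ in $\overline{N^\tau_i}$), the bound of Theorem \ref{thm:Suppose-.-Then} satisfies $\frac{1-\sqrt{1-4p_\sigma q_\sigma}}{2p_\sigma}=\frac{2q_\sigma}{1+\sqrt{1-4p_\sigma q_\sigma}}\ge q_\sigma=q(1+\delta\sigma R_0)>q$. So it always exceeds the size of the slab's own initial data and can never be fed back into itself.

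\textbf{How the paper closes this step, and why you should not copy it.} The paper uses Corollary \ref{thm:Suppose-.-Then-2}, i.e.\ Lemma \ref{lem:For--sufficiently-1}. From $q<f(R_0)$ one has $2q_\sigma<\frac{1}{2\mu(\sigma+cS)}$, and Lemma \ref{lem:For--sufficiently} asserts this is $\le R_0$ for $\sigma$ large. Remark \ref{rem:From--we} then gives $q_{n+1}<\gamma R_0<f(R_0)$ on every slab, so the steps $g(q_n)$ are bounded below. Your fixed step $h=g(\gamma R_0)$ is a cleaner version of the paper's contradiction argument for $\sum_n g(q_n)=+\infty$. However, Lemma \ref{lem:For--sufficiently} states $f(R_0)\le\frac{R_0}{2(1+\delta\sigma R_0)}$. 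Together with the hypothesis $\gamma R_0<f(R_0)$, this would force $\gamma<\frac12$, which is impossible since $\gamma=1+c+\frac1c\ge3$.

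\textbf{The claimed bound is false for $c\neq1$.} For $c>1$, take $N^0_1=R_0\,\phi(x)$ with $\phi\in C^1$, $\phi(a_1)=0$, $0\le\phi\le1$, $\phi'(a_1)=1=\|\phi'\|_\infty$, and all other data $\equiv0$. The unique solution is $N_2=N_3=N_4\equiv0$, with $N_1=R_0\,\phi(x-ct)$ for $x-ct\ge a_1$ and $0$ otherwise. Hence $\|\partial_tN_1\|_\infty=cR_0>R_0$, however large $\sigma$ and however small $R_0$ are. For $c<1$, use $N^-_1=R_0\,\psi(t)$ with $\psi(0)=0$, $0\le\psi\le1$, $\psi'(0)=1=\|\psi'\|_\infty$, and look at $\partial_xN_1$ instead.

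\textbf{What is missing.} Both your proposal and the paper lack a genuinely time-uniform mechanism. One candidate is the finite transit time $\max\{b_1-a_1,b_2-a_2\}/c$: after it, every backward characteristic has met the inflow boundary, so a slab's initial data are flushed out and the quadratic increments stop accumulating. Even then, the target must be a bound $C(c)\,R_0$, and the example shows $C(c)>1$ when $c\neq1$.
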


\section{Reformulation of one initial-boundary value problem as fixed-point
	problems}\label{kooollooaooaiei}

Set for a fixed $\left[\tau,\tau'\right],$ $\mathscr{P}\equiv\left[\tau,\tau'\right]\times\left[a_{1};b_{1}\right]\times\left[a_{2};b_{2}\right].$
Let $\sigma>0.$ For $N=\left(N_{i}\right)^{4}_{i=1},$$N_{i}:\mathscr{P}\longrightarrow\R,$
put 
\begin{equation}
	\rho\left(N\right)=\sum^{4}_{i=1}N_{i}\label{eq:kiid}
\end{equation}
and

\begin{equation}
	\begin{cases}
		Q^{\sigma}_{1}\left(N\right)=\sigma\rho\left(N\right)N_{1}+Q\left(N\right)\\
		Q^{\sigma}_{2}\left(N\right)=\sigma\rho\left(N\right)N_{2}-Q\left(N\right)\\
		Q^{\sigma}_{3}\left(N\right)=\sigma\rho\left(N\right)N_{3}-Q\left(N\right)\\
		Q^{\sigma}_{4}\left(N\right)=\sigma\rho\left(N\right)N_{4}+Q\left(N\right)
	\end{cases}.\label{eq:opaole}
\end{equation}
Then for $\sigma>0,$ $\Sigma_{\tau,\tau'}$ is equivalent to problem
$\left(\Sigma^{\sigma}_{\tau,\tau'}\right)$ defined by \eqref{eq:ssdffz-1}-\eqref{eq:qqqqsa-1}
\begin{align}
	{\textstyle \dfrac{\p N_{1}}{\p t}+c\dfrac{\p N_{1}}{\p x}} & {\displaystyle +\sigma\rho\left(N\right)N_{1}}=Q^{\sigma}_{1}\left(N\right),\;\left(t,x,y\right)\in\mathring{\mathscr{P}}\label{eq:ssdffz-1}\\
	\dfrac{\p N_{2}}{\p t}+c\dfrac{\p N_{2}}{\p y} & +\sigma\rho\left(N\right)N_{2}=Q^{\sigma}_{2}\left(N\right),\;\left(t,x,y\right)\in\mathring{\mathscr{P}}\\
	\dfrac{\p N_{3}}{\p t}-c\dfrac{\p N_{3}}{\p y} & +\sigma\rho\left(N\right)N_{3}=Q^{\sigma}_{3}\left(N\right),\;\left(t,x,y\right)\in\mathring{\mathscr{P}}\\
	\dfrac{\p N_{4}}{\p t}-c\dfrac{\p N_{4}}{\p x} & +\sigma\rho\left(N\right)N_{4}=Q^{\sigma}_{4}\left(N\right),\;\left(t,x,y\right)\in\mathring{\mathscr{P}}\label{eq:qqqqsa-1}
\end{align}
with the conditions \eqref{eq:lsos-1}-\eqref{eq:ikioi-1-1}.

Let $M=\left(M_{i}\right)^{4}_{i=1}$ be a fixed 4-tuple of continuous
functions defined from $\begin{array}{r}
	\mathscr{P}\end{array}$ to $\R$ and put $\left|M\right|=\left(\left|M_{i}\right|\right)^{4}_{i=1}$.
Suppose $\left|M\right|$ possesses all first order partial derivatives
with respect to the variables. Consider the problem $\left(\Sigma^{\sigma,M}_{\tau,\tau'}\right)$
defined by the following linear system \eqref{eq:ssdffz-1-1}-\eqref{eq:qqqqsa-1-1}
\begin{align}
	{\textstyle \dfrac{\p N_{1}}{\p t}+c\dfrac{\p N_{1}}{\p x}} & {\displaystyle +\sigma\rho\left(\left|M\right|\right)N_{1}}=Q^{\sigma}_{1}\left(\left|M\right|\right),\;\left(t,x,y\right)\in\mathring{\mathscr{P}}\label{eq:ssdffz-1-1}\\
	\dfrac{\p N_{2}}{\p t}+c\dfrac{\p N_{2}}{\p y} & +\sigma\rho\left(\left|M\right|\right)N_{2}=Q^{\sigma}_{2}\left(\left|M\right|\right),\;\left(t,x,y\right)\in\mathring{\mathscr{P}}\label{eq:skiz}\\
	\dfrac{\p N_{3}}{\p t}-c\dfrac{\p N_{3}}{\p y} & +\sigma\rho\left(\left|M\right|\right)N_{3}=Q^{\sigma}_{3}\left(\left|M\right|\right),\;\left(t,x,y\right)\in\mathring{\mathscr{P}}\label{eq:losik}\\
	\dfrac{\p N_{4}}{\p t}-c\dfrac{\p N_{4}}{\p x} & +\sigma\rho\left(\left|M\right|\right)N_{4}=Q^{\sigma}_{4}\left(\left|M\right|\right),\;\left(t,x,y\right)\in\mathring{\mathscr{P}}\label{eq:qqqqsa-1-1}
\end{align}
with the conditions \eqref{eq:lsos-1}-\eqref{eq:ikioi-1-1}. $\left(\Sigma^{\sigma,M}_{\tau,\tau'}\right)$
admits an unique solution $\mathcal{T}^{\sigma}\left(M\right)=\left(\mathcal{T}^{\sigma}_{i}\left(M\right)\right)^{4}_{i=1}$
defined as followed: for an inequality $f\left(t,x,y\right)\leq0$
defined on $\mathscr{P}$, let $\mathbb{I}_{f\left(t,x,y\right)\leq0}$
denote the identity function of the set $\left\{ \left(t,x,y\right)\in\mathscr{P}:f\left(t,x,y\right)\leq0\right\} ;$
then (using \eqref{eq:kqiiq-1-1}-\eqref{eq:looqp-1-1-1-1}) 
\begin{multline}
	\mathcal{T}^{\sigma}_{1}\left(M\right)\left(t,x,y\right)=\mathcal{T}^{A,\sigma}_{1}\left(M\right)\left(t,x,y\right)\cdot\mathbb{I}_{x-c\left(t-\tau\right)\geq a_{1}}\left(t,x,y\right)+\\
	\mathcal{T}^{B,\sigma}_{1}\left(M\right)\left(t,x,y\right)\cdot\mathbb{I}_{x-c\left(t-\tau\right)\leq a_{1}}\left(t,x,y\right)\label{aoalal}
\end{multline}
where

\begin{multline}
	\mathcal{T}^{A,\sigma}_{1}\left(M\right)\left(t,x,y\right)=\\ \Biggl({\displaystyle \int^{t}_{\tau}}e^{\sigma\int^{s}_{\tau}\rho\left(\left|M\right|\right)\left(r,x+c\left(r-t\right),y\right)dr}\cdot Q^{\sigma}_{1}\left(\left|M\right|\right)
	\left(s,x+c\left(s-t\right),y\right)ds+\overline{N^{\tau}_{1}}\left(t,x,y\right)\Biggr)\cdot
	e^{-\sigma{\textstyle {\displaystyle \int^{t}_{\tau}}}\rho\left(\left|M\right|\right)\left(s,x+c\left(s-t\right),y\right)ds}\label{eq:olole-1}
\end{multline}

\begin{multline}
	\mathcal{T}^{B,\sigma}_{1}\left(M\right)\left(t,x,y\right)=\\
	\Biggl({\displaystyle \int^{t}_{t-\frac{1}{c}x+\frac{a_{1}}{c}}}e^{\sigma\int^{s}_{t-\frac{1}{c}x+\frac{a_{1}}{c}}\rho\left(\left|M\right|\right)\left(r,x+c\left(r-t\right),y\right)dr}\cdot Q^{\sigma}_{1}\left(\left|M\right|\right)
	\left(s,x+c\left(s-t\right),y\right)ds+\overline{N^{-}_{1}}\left(t,x,y\right)\Biggr)\cdot\\
	e^{-\sigma{\textstyle \int^{t}_{t-\frac{1}{c}x+\frac{a_{1}}{c}}}\rho\left(\left|M\right|\right)\left(s,x+c\left(s-t\right),y\right)ds}\label{eq:olole-1-1}
\end{multline}
\begin{multline}
	\mathcal{T}^{\sigma}_{2}\left(M\right)\left(t,x,y\right)=\mathcal{T}^{A,\sigma}_{2}\left(M\right)\left(t,x,y\right)\cdot\mathbb{I}_{y-c\left(t-\tau\right)\geq a_{2}}\left(t,x,y\right)+\\
	\mathcal{T}^{B,\sigma}_{2}\left(M\right)\left(t,x,y\right)\cdot\mathbb{I}_{y-c\left(t-\tau\right)\leq a_{2}}\left(t,x,y\right)\label{aoalal-1}
\end{multline}

where 
\begin{multline}
	\mathcal{T}^{A,\sigma}_{2}\left(M\right)\left(t,x,y\right)=\Biggl({\textstyle {\displaystyle \int^{t}_{\tau}}e^{\sigma\int^{s}_{\tau}\rho\left(\left|M\right|\right)\left(r,x,y+c\left(r-t\right)\right)dr}}\cdot
	{\textstyle Q^{\sigma}_{2}\left(\left|M\right|\right)\left(s,x,y+c\left(s-t\right)\right)ds}+{\textstyle \overline{N^{\tau}_{2}}\left(t,x,y\right)\Biggr)}\cdot\\
	{\textstyle e^{-\sigma{\displaystyle \int^{t}_{\tau}}\rho\left(\left|M\right|\right)\left(s,x,y+c\left(s-t\right)\right)ds}}\label{eq:olole-1-1-1-1-1-1}
\end{multline}

\begin{multline}
	\mathcal{T}^{B,\sigma}_{2}\left(M\right)\left(t,x,y\right)=
	\Biggl({\textstyle {\textstyle {\displaystyle \int^{t}_{t-\frac{1}{c}y+\frac{a_{2}}{c}}}}e^{\sigma\int^{s}_{t-\frac{1}{c}y+\frac{a_{2}}{c}}\rho\left(\left|M\right|\right)\left(r,x,y+c\left(r-t\right)\right)dr}}\cdot
	{\textstyle Q^{\sigma}_{2}\left(\left|M\right|\right)\left(s,x,y+c\left(s-t\right)\right)ds}+{\textstyle \overline{N^{-}_{2}}\left(t,x,y\right)\Biggr)}\cdot\\
	{\textstyle e^{-\sigma{\displaystyle \int^{t}_{t-\frac{1}{c}y+\frac{a_{2}}{c}}}\rho\left(\left|M\right|\right)\left(s,x,y+c\left(s-t\right)\right)ds}}\label{eq:olole-1-1-1-1-2}
\end{multline}

\begin{multline}
	\mathcal{T}^{\sigma}_{3}\left(M\right)\left(t,x,y\right)=\mathcal{T}^{A,\sigma}_{3}\left(M\right)\left(t,x,y\right)\cdot\mathbb{I}_{y+c\left(t-\tau\right)\leq b_{2}}\left(t,x,y\right)+\\
	\mathcal{T}^{B,\sigma}_{3}\left(M\right)\left(t,x,y\right)\cdot\mathbb{I}_{y+c\left(t-\tau\right)\geq b_{2}}\left(t,x,y\right)+\label{aoalal-1-1}
\end{multline}
where 
\begin{multline}
	\mathcal{T}^{A,\sigma}_{3}\left(M\right)\left(t,x,y\right)=
	\Biggl({\displaystyle \int^{t}_{\tau}}e^{\sigma\int^{s}_{0}\rho\left(\left|M\right|\right)\left(r,x,y-c\left(r-t\right)\right)dr}\cdot Q^{\sigma}_{3}\left(\left|M\right|\right)
	\left(s,x,y-c\left(s-t\right)\right)ds+\overline{N^{\tau}_{3}}\left(t,x,y\right)\Biggr)\cdot\\
	e^{-\sigma{\textstyle {\displaystyle \int^{t}_{\tau}}}\rho\left(\left|M\right|\right)\left(s,x,y-c\left(s-t\right)\right)ds}\label{eq:olole-1-2}
\end{multline}
\begin{multline}
	\mathcal{T}^{B,\sigma}_{3}\left(M\right)\left(t,x,y\right)=\\
	\Biggl({\displaystyle \int^{t}_{t+\frac{1}{c}y-\frac{b_{2}}{c}}}e^{\sigma\int^{s}_{t+\frac{1}{c}y-\frac{b_{2}}{c}}\rho\left(\left|M\right|\right)\left(r,x,y-c\left(r-t\right)\right)dr}\cdot Q^{\sigma}_{3}\left(\left|M\right|\right)
	\left(s,x,y-c\left(s-t\right)\right)ds+\overline{N^{+}_{3}}\left(t,x,y\right)\Biggr)\cdot\\
	e^{-\sigma{\textstyle {\displaystyle \int^{t}_{t+\frac{1}{c}y-\frac{b_{2}}{c}}}}\rho\left(\left|M\right|\right)\left(s,x,y-c\left(s-t\right)\right)ds}\label{eq:olole-1-1-1}
\end{multline}

\begin{multline}
	\mathcal{T}^{\sigma}_{4}\left(M\right)\left(t,x,y\right)=\mathcal{T}^{A,\sigma}_{4}\left(M\right)\left(t,x,y\right)\cdot\mathbb{I}_{x+c\left(t-\tau\right)\leq b_{1}}\left(t,x,y\right)+\\
	\mathcal{T}^{B,\sigma}_{4}\left(M\right)\left(t,x,y\right)\cdot\mathbb{I}_{x+c\left(t-\tau\right)\geq b_{1}}\left(t,x,y\right)\label{aoalal-1-1-1}
\end{multline}
where 
\begin{multline}
	\mathcal{T}^{A,\sigma}_{4}\left(M\right)\left(t,x,y\right)=
	\Biggl({\displaystyle \int^{t}_{\tau}}e^{\sigma\int^{s}_{\tau}\rho\left(\left|M\right|\right)\left(r,x-c\left(r-t\right),y\right)dr}\cdot Q^{\sigma}_{4}\left(\left|M\right|\right)
	\left(s,x-c\left(s-t\right),y\right)ds+\overline{N^{\tau}_{4}}\left(t,x,y\right)\Biggr)\cdot\\
	e^{-\sigma{\textstyle {\displaystyle \int^{t}_{\tau}}}\rho\left(\left|M\right|\right)\left(s,x-c\left(s-t\right),y\right)ds}\label{eq:olole-1-2-1}
\end{multline}
\begin{multline}
	\mathcal{T}^{B,\sigma}_{4}\left(M\right)\left(t,x,y\right)=\\
	\Biggl({\displaystyle \int^{t}_{t+\frac{1}{c}x-\frac{b_{1}}{c}}}e^{\sigma\int^{s}_{t+\frac{1}{c}x-\frac{b_{1}}{c}}\rho\left(\left|M\right|\right)\left(r,x-c\left(r-t\right),y\right)dr}\cdot Q^{\sigma}_{4}\left(\left|M\right|\right)
	\left(s,x-c\left(s-t\right),y\right)ds+\overline{N^{+}_{4}}\left(t,x,y\right)\Biggr)\cdot\\
	e^{-\sigma{\textstyle {\displaystyle \int^{t}_{t+\frac{1}{c}x-\frac{b_{1}}{c}}}}\rho\left(\left|M\right|\right)\left(s,x-c\left(s-t\right),y\right)ds}\label{eq:olole-1-1-1-2}
\end{multline}

Notice (\eqref{eq:opaole},\eqref{eq:kiid}, \eqref{eq:jjdpoa}) that

\begin{align*}
	Q^{\sigma}_{1}\left(\left|M\right|\right) & =\sigma\left(\left|M_{1}\right|+\left|M_{2}\right|+\left|M_{3}\right|\right)\left|M_{1}\right|+2cS\left|M_{2}\right|\left|M_{3}\right|+\left(\sigma-2cS\right)\left|M_{1}\right|\left|M_{4}\right|
\end{align*}
{} 
\begin{align*}
	Q^{\sigma}_{2}\left(\left|M\right|\right) & =\sigma\left(\left|M_{1}\right|+\left|M_{2}\right|+\left|M_{4}\right|\right)\left|M_{2}\right|+2cS\left|M_{1}\right|\left|M_{4}\right|+\left(\sigma-2cS\right)\left|M_{2}\right|\left|M_{3}\right|
\end{align*}

\begin{align*}
	Q^{\sigma}_{3}\left(\left|M\right|\right) & =\sigma\left(\left|M_{1}\right|+\left|M_{3}\right|+\left|M_{4}\right|\right)\left|M_{3}\right|+2cS\left|M_{1}\right|\left|M_{4}\right|+\left(\sigma-2cS\right)\left|M_{2}\right|\left|M_{3}\right|
\end{align*}

\begin{align*}
	Q^{\sigma}_{4}\left(\left|M\right|\right) & =\sigma\left(\left|M_{2}\right|+\left|M_{3}\right|+\left|M_{4}\right|\right)\left|M_{4}\right|+2cS\left|M_{2}\right|\left|M_{3}\right|+\left(\sigma-2cS\right)\left|M_{1}\right|\left|M_{4}\right|.
\end{align*}
Thus for $\sigma>2cS$ , the solution $\mathcal{T}^{\sigma}\left(M\right)=\left(\mathcal{T}^{\sigma}_{i}\left(M\right)\right)^{4}_{i=1}$
of $\left(\Sigma^{\sigma,M}_{\tau,\tau'}\right)$ is non-negative.
Let $C\left(\mathscr{P},\R\right)$ be the space of real continuous
functions on $\mathscr{P}.$ Consider the operator $\mathcal{T}^{\sigma}:C\left(\mathscr{P},\R\right)^{4}\longrightarrow C\left(\mathscr{P},\R\right)^{4},$ $M\longmapsto\mathcal{T}^{\sigma}\left(M\right).$ 
\begin{prop}
	\label{prop:pqmm} Let $\sigma>0$ be sufficiently large. Suppose
	that the 4-tuple $N=\left(N_{i}\right)^{4}_{i=1}$ is continuous on
	$\mathscr{P}$ and possesses all first-order partial derivatives.
	If $N$ is a fixed point of $\mathcal{T}^{\sigma}$ then $N$ is a
	non-negative solution of problem $\Sigma_{\tau,\tau'}.$ 
\end{prop}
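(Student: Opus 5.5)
The plan is to exploit the fact that a fixed point $N=\mathcal{T}^{\sigma}(N)$ is automatically non-negative, so that $\left|N\right|=N$. Once we know this, the linear problem $\left(\Sigma^{\sigma,N}_{\tau,\tau'}\right)$ coincides with the nonlinear problem $\left(\Sigma^{\sigma}_{\tau,\tau'}\right)$, which is equivalent to $\Sigma_{\tau,\tau'}$.

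First, take $\sigma>2cS$; this is what ``sufficiently large'' is used for. With this choice every coefficient in the explicit expansions of $Q^{\sigma}_{i}\left(\left|M\right|\right)$ is non-negative, so $Q^{\sigma}_{i}\left(\left|M\right|\right)\geq0$ for any $M$. The data $\overline{N^{\tau}_{i}}$ and $\overline{N^{\pm}_{i}}$ are non-negative and exponentials are positive. Hence each branch $\mathcal{T}^{A,\sigma}_{i}(M)$, $\mathcal{T}^{B,\sigma}_{i}(M)$ in \eqref{aoalal}--\eqref{eq:olole-1-1-1-2} is non-negative, and so is $\mathcal{T}^{\sigma}_{i}(M)$. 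Applying this with $M=N$ gives $N_{i}=\mathcal{T}^{\sigma}_{i}(N)\geq0$, that is, $\left|N\right|=N$.

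Second, I check that $N$ satisfies the differential equations and the side conditions. Since $\left|N\right|=N$ has first-order partial derivatives by hypothesis, the formula for $\mathcal{T}^{\sigma}(N)$ is the characteristic-integration solution of the linear system \eqref{eq:ssdffz-1-1}--\eqref{eq:qqqqsa-1-1} with $M=N$. Concretely, consider $N_{1}$ in the region $x-c(t-\tau)\geq a_{1}$ and fix the characteristic $s\mapsto(s,x_{0}+c(s-\tau),y)$. Along it,
\[
\phi(s)\equiv N_{1}\,e^{\sigma\int_{\tau}^{s}\rho(N)}=\int_{\tau}^{s}e^{\sigma\int_{\tau}^{r}\rho(N)}Q^{\sigma}_{1}(N)\,dr+N^{\tau}_{1}(x_{0},y).
\]
Differentiating in $s$ by the fundamental theorem of calculus, which is legitimate because the integrand is continuous, gives
\[
\left(\partial_{t}+c\,\partial_{x}\right)N_{1}+\sigma\rho(N)N_{1}=Q^{\sigma}_{1}(N).
\]
The branch $x-c(t-\tau)\leq a_{1}$ is handled the same way, with the characteristic starting on $x=a_{1}$; the components $i=2,3,4$ follow by the symmetric computation in $y$ or in $-x$, $-y$. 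Because $N$ is differentiable, the directional derivative along the characteristic equals $\partial_{t}N_{1}+c\,\partial_{x}N_{1}$. Using $Q^{\sigma}_{1}(N)-\sigma\rho(N)N_{1}=Q(N)$ from \eqref{eq:opaole}, we recover the first equation of \eqref{eq:koiqmn}, and likewise for the others.

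For the side conditions, evaluating at $t=\tau$ makes the integrals vanish and gives $N_{i}(\tau,\cdot)=N^{\tau}_{i}$. On the inflow boundaries (for example $x=a_{1}$ for $N_{1}$, where the lower limit $t-\frac{x-a_{1}}{c}$ equals $t$), the $B$-branch reduces to $\overline{N^{-}_{1}}(t,a_{1},y)=N^{-}_{1}(t,y)$. Thus \eqref{eq:lsos-1}--\eqref{eq:ikioi-1-1} hold.

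The main delicate point is the interface plane $x-c(t-\tau)=a_{1}$, and its analogues for the other components, where the two branches meet. Continuity of $N$ is assumed, and the compatibility conditions \eqref{eq:kqiiq-1}--\eqref{eq:looqp-1-1} make the branches agree there anyway. The differential equation itself only needs to be checked in the open regions on either side, since each characteristic stays within a single branch. On the plane, the assumed differentiability of $N$ extends the identity by continuity of both sides. I expect this step to be brief, but I would state it explicitly.
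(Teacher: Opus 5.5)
Your argument is the paper's own: $\sigma>2cS$ makes $Q^{\sigma}_{i}(|M|)\geq0$, so every fixed point satisfies $|N|=N$, hence $(\Sigma^{\sigma,N}_{\tau,\tau'})$ coincides with $(\Sigma^{\sigma}_{\tau,\tau'})$, which is equivalent to $\Sigma_{\tau,\tau'}$; your characteristic computation additionally checks that the explicit formula for $\mathcal{T}^{\sigma}(M)$ solves the linear problem, which the paper takes for granted. Two small fixes. First, identifying the derivative along $(1,c,0)$ with $\partial_{t}N_{1}+c\,\partial_{x}N_{1}$ needs $N$ to be differentiable (e.g.\ continuous partials), which is slightly more than the stated hypothesis; the paper tacitly assumes the same. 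Second, on the plane $x-c(t-\tau)=a_{1}$ you need no continuity extension, and none is available since the partials are not assumed continuous: that plane is itself a union of characteristics on which both branches coincide by \eqref{eq:kqiiq-1}, so the same fundamental-theorem-of-calculus argument applies there directly.
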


\begin{proof}
	$M=\left(M_{i}\right)^{4}_{i=1}\in C\left(\begin{array}{r}
		\mathscr{P}\end{array};\R\right)^{4}$ possessing all first-order partial derivatives is a fixed point of
	$\mathcal{T}^{\sigma}$ iff $\mathcal{T}^{\sigma}\left(M\right)=M$
	i.e. $M$ is solution of $\left(\Sigma^{\sigma,M}_{\tau,\tau'}\right):$\eqref{eq:ssdffz-1-1}-\eqref{eq:qqqqsa-1-1}
	with the conditions \eqref{eq:lsos-1}-\eqref{eq:ikioi-1-1}. So $M$
	is a fixed of $\mathcal{T}^{\sigma}$ iff 
	\begin{align}
		{\textstyle \dfrac{\p M_{1}}{\p t}+c\dfrac{\p M_{1}}{\p x}} & {\displaystyle +\sigma\rho\left(\left|M\right|\right)M_{1}}=Q^{\sigma}_{1}\left(\left|M\right|\right)\label{eq:ssdffz-1-1-1-1-1}\\
		\dfrac{\p M_{2}}{\p t}+c\dfrac{\p M_{2}}{\p y} & +\sigma\rho\left(\left|M\right|\right)M_{2}=Q^{\sigma}_{2}\left(\left|M\right|\right)\\
		\dfrac{\p N_{3}}{\p t}-c\dfrac{\p N_{3}}{\p y} & +\sigma\rho\left(\left|M\right|\right)N_{3}=Q^{\sigma}_{3}\left(\left|M\right|\right)\\
		\dfrac{\p N_{4}}{\p t}-c\dfrac{\p N_{4}}{\p x} & +\sigma\rho\left(\left|M\right|\right)N_{4}=Q^{\sigma}_{4}\left(\left|M\right|\right)\label{eq:qqqqsa-1-1-1-1-1}
	\end{align}
	with the conditions \eqref{eq:lsos-1}-\eqref{eq:ikioi-1-1}; but
	for $\sigma$ sufficiently large, we have $\mathcal{T}^{\sigma}\left(M\right)=M\geq0,$
	thus $\left|M\right|=M.$ Thus \eqref{eq:ssdffz-1-1-1-1-1}-\eqref{eq:qqqqsa-1-1-1-1-1}
	implies that $M$ is a solution of $\left(\Sigma^{\sigma}_{\tau,\tau'}\right):$
	\eqref{eq:ssdffz-1}-\eqref{eq:qqqqsa-1} with \eqref{eq:lsos-1}-\eqref{eq:ikioi-1-1}
	and thus of $\Sigma_{\tau,\tau'}$ which is equivalent to $\left(\Sigma^{\sigma}_{\tau,\tau'}\right)$. 
\end{proof}

Now let $M=\left(M_{i}\right)^{4}_{i=1}\in C\left(\mathscr{P},\R\right)^{4}$
be fixed and consider the problem $\left(\Sigma^{M}_{\tau,\tau'}\right)$
defined by \eqref{eq:ssdffz-1-1-1-1}-\eqref{eq:qqqqsa-1-1-1-1} 
\begin{align}
	{\textstyle \dfrac{\p N_{1}}{\p t}+c\dfrac{\p N_{1}}{\p x}} & =Q(M),\;\left(t,x,y\right)\in\mathring{\mathscr{P}}\label{eq:ssdffz-1-1-1-1}\\
	\dfrac{\p N_{2}}{\p t}+c\dfrac{\p N_{2}}{\p y} & =-Q(M),\;\left(t,x,y\right)\in\mathring{\mathscr{P}}\\
	\dfrac{\p N_{3}}{\p t}-c\dfrac{\p N_{3}}{\p y} & =-Q(M),\;\left(t,x,y\right)\in\mathring{\mathscr{P}}\\
	\dfrac{\p N_{4}}{\p t}-c\dfrac{\p N_{4}}{\p x} & =Q(M),\;\left(t,x,y\right)\in\mathring{\mathscr{P}}\label{eq:qqqqsa-1-1-1-1}
\end{align}
with the conditions \eqref{eq:lsos-1}-\eqref{eq:ikioi-1-1}. $\left(\Sigma^{M}_{\tau,\tau'}\right)$
has an unique (not necessarily non-negative) continuous solution $\mathcal{T}\left(M\right)=\left(\mathcal{T}_{i}\left(M\right)\right)^{4}_{i=1}$
defined by 
\begin{multline}
	\mathcal{T}_{1}\left(M\right)\left(t,x,y\right)=\mathcal{T}^{A}_{1}\left(M\right)\left(t,x,y\right)\cdot\mathbb{I}_{x-c\left(t-\tau\right)\geq a_{1}}\left(t,x,y\right)+\\
	\mathcal{T}^{B}_{1}\left(M\right)\left(t,x,y\right)\cdot\mathbb{I}_{x-c\left(t-\tau\right)\leq a_{1}}\left(t,x,y\right)\label{aoalal-2}
\end{multline}
where

\begin{align}
	\mathcal{T}^{A}_{1}\left(M\right)\left(t,x,y\right) & ={\displaystyle \int^{t}_{\tau}}Q\left(M\right)\left(s,x+c\left(s-t\right),y\right)ds+\overline{N^{\tau}_{1}}\left(t,x,y\right)\label{eq:olole-1-3}
\end{align}

\begin{align}
	\mathcal{T}^{B}_{1}\left(M\right)\left(t,x,y\right) & ={\displaystyle \int^{t}_{t-\frac{1}{c}x+\frac{a_{1}}{c}}}Q\left(M\right)\left(s,x+c\left(s-t\right),y\right)ds+\overline{N^{-}_{1}}\left(t,x,y\right)\label{eq:olole-1-1-2}
\end{align}
\begin{multline}
	\mathcal{T}_{2}\left(M\right)\left(t,x,y\right)=\mathcal{T}^{A}_{2}\left(M\right)\left(t,x,y\right)\cdot\mathbb{I}_{y-c\left(t-\tau\right)\geq a_{2}}\left(t,x,y\right)+\\
	\mathcal{T}^{B}_{2}\left(M\right)\left(t,x,y\right)\cdot\mathbb{I}_{y-c\left(t-\tau\right)\leq a_{2}}\left(t,x,y\right)\label{aoalal-1-2}
\end{multline}

where 
\begin{align}
	\mathcal{T}^{A}_{2}\left(M\right)\left(t,x,y\right) & ={\textstyle {\displaystyle \int^{t}_{\tau}}-Q\left(M\right)}\left(s,x,y+c\left(s-t\right)\right)ds+{\textstyle \overline{N^{\tau}_{2}}\left(t,x,y\right)}\label{eq:olole-1-1-1-1-1-1-1}
\end{align}

\begin{align}
	\mathcal{T}^{B}_{2}\left(M\right)\left(t,x,y\right) & ={\textstyle {\displaystyle \int^{t}_{t-\frac{1}{c}y+\frac{a_{2}}{c}}}}-Q\left(M\right)\left(s,x,y+c\left(s-t\right)\right)ds+\overline{N^{-}_{2}}\left(t,x,y\right)\label{eq:olole-1-1-1-1-2-1}
\end{align}

\begin{multline}
	\mathcal{T}_{3}\left(M\right)\left(t,x,y\right)=\mathcal{T}^{A}_{3}\left(M\right)\left(t,x,y\right)\cdot\mathbb{I}_{y+c\left(t-\tau\right)\leq b_{2}}\left(t,x,y\right)+\\
	\mathcal{T}^{B}_{3}\left(M\right)\left(t,x,y\right)\cdot\mathbb{I}_{y+c\left(t-\tau\right)\geq b_{2}}\left(t,x,y\right)+\label{aoalal-1-1-3}
\end{multline}
where 
\begin{align}
	\mathcal{T}^{A}_{3}\left(M\right)\left(t,x,y\right) & ={\displaystyle \int^{t}_{\tau}}-Q\left(M\right)\left(s,x,y-c\left(s-t\right)\right)ds+\overline{N^{\tau}_{3}}\left(t,x,y\right)\label{eq:olole-1-2-2}
\end{align}
\begin{align}
	\mathcal{T}^{B}_{3}\left(M\right)\left(t,x,y\right) & ={\displaystyle \int^{t}_{t+\frac{1}{c}y-\frac{b_{2}}{c}}}-Q\left(M\right)\left(s,x,y-c\left(s-t\right)\right)ds+\overline{N^{+}_{3}}\left(t,x,y\right)\label{eq:olole-1-1-1-1}
\end{align}

\begin{multline}
	\mathcal{T}_{4}\left(M\right)\left(t,x,y\right)=\mathcal{T}^{A}_{4}\left(M\right)\left(t,x,y\right)\cdot\mathbb{I}_{x+c\left(t-\tau\right)\leq b_{1}}\left(t,x,y\right)+\\
	\mathcal{T}^{B}_{4}\left(M\right)\left(t,x,y\right)\cdot\mathbb{I}_{x+c\left(t-\tau\right)\geq b_{1}}\left(t,x,y\right)\label{aoalal-1-1-1-1}
\end{multline}
where 
\begin{align}
	\mathcal{T}^{A}_{4}\left(M\right)\left(t,x,y\right) & ={\displaystyle \int^{t}_{\tau}}Q\left(M\right)\left(s,x-c\left(s-t\right),y\right)ds+\overline{N^{\tau}_{4}}\left(t,x,y\right)\label{eq:olole-1-2-1-2}
\end{align}
\begin{align}
	\mathcal{T}^{B}_{4}\left(M\right)\left(t,x,y\right) & =\int^{t}_{t+\frac{1}{c}x-\frac{b_{1}}{c}}Q\left(M\right)\left(s,x-c\left(s-t\right),y\right)ds+\overline{N^{+}_{4}}\left(t,x,y\right).\label{eq:olole-1-1-1-2-2}
\end{align}
Consider the operator $\mathcal{T}:C\left(\mathscr{P},\R\right)^{4}\longrightarrow C\left(\mathscr{P},\R\right)^{4},$$M\longmapsto\mathcal{T}\left(M\right).$
It is immediate from the statements of $\left(\Sigma^{M}_{\tau,\tau'}\right)$(
\eqref{eq:ssdffz-1-1-1-1}-\eqref{eq:qqqqsa-1-1-1-1} with \eqref{eq:lsos-1}-\eqref{eq:ikioi-1-1})
and of $\Sigma_{\tau,\tau'}$ (\eqref{eq:koiqmn} with \eqref{eq:lsos-1}-\eqref{eq:ikioi-1-1})
that we have 
\begin{prop}
	\label{prop:Let-the-4-tuple}Let the 4-tuple $N=\left(N_{i}\right)^{4}_{i=1}$
	be continuous on $\mathscr{P}$ and possess all first-order partial
	derivatives. $N$ is a solution of problem $\Sigma_{\tau,\tau'}$
	iff $N$ is a fixed point of $\mathcal{T}.$ 
\end{prop}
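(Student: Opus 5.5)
The plan is to prove the two implications separately. Both rest on one fact: for a fixed continuous forcing term, the linear transport problem along each characteristic family has exactly one solution. That solution is given by integrating along characteristics, which is exactly how $\mathcal{T}_i(M)$ is built.

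\emph{Solution implies fixed point.} Let $N$ solve $\Sigma_{\tau,\tau'}$, and take $M=N$ in $\left(\Sigma^{M}_{\tau,\tau'}\right)$. Then $N$ satisfies \eqref{eq:ssdffz-1-1-1-1}-\eqref{eq:qqqqsa-1-1-1-1} with the conditions \eqref{eq:lsos-1}-\eqref{eq:ikioi-1-1}, since the right-hand sides $\pm Q(N)$ are exactly those of \eqref{eq:koiqmn}. To see that $N=\mathcal{T}(N)$, I would restrict each equation to its characteristic line. For $N_{1}$, fix $(t,x,y)$ and set $g(s)=N_{1}(s,x+c(s-t),y)$. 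Because $N_1$ has first-order partial derivatives satisfying the equation, $g'(s)=Q(N)(s,x+c(s-t),y)$ along the line. Integrate $g'$ from the foot of the characteristic to $t$. If $x-c(t-\tau)\geq a_{1}$, the foot is at $s=\tau$ and the initial condition gives $g(\tau)=\overline{N^{\tau}_{1}}(t,x,y)$. Otherwise the foot is at $s=t-(x-a_{1})/c$, where the boundary condition \eqref{eq:losso-1} gives $g=\overline{N^{-}_{1}}(t,x,y)$. This reproduces $\mathcal{T}^{A}_{1}$ and $\mathcal{T}^{B}_{1}$ respectively. The other three components are handled the same way, using their own characteristic directions and inflow boundaries.

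\emph{Fixed point implies solution.} Let $N=\mathcal{T}(N)$. I would differentiate the explicit formulas along characteristics. On each open region $\{x-c(t-\tau)>a_{1}\}$ and $\{x-c(t-\tau)<a_{1}\}$, the operator $\partial_t+c\partial_x$ kills $\overline{N^{\tau}_{1}}$ and $\overline{N^{-}_{1}}$, because these are constant along characteristics. Applied to the integral term, it gives $Q(N)$, by the fundamental theorem of calculus in the parametrization $s\mapsto(s,x+c(s-t),y)$ and continuity of $Q(N)$. On the separating plane $x-c(t-\tau)=a_{1}$, I would use that $N_1$ possesses first-order partial derivatives by hypothesis. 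The directional derivative along the characteristic is computed from either side by the same formula, so the equation extends there. The initial and boundary conditions hold by direct evaluation: at $t=\tau$ only branch A is active and the integral vanishes, and at $x=a_{1}$ branch B gives $\overline{N^{-}_{1}}(t,a_1,y)=N^{-}_{1}(t,y)$. The compatibility conditions \eqref{eq:kqiiq-1}-\eqref{eq:looqp-1-1} ensure that the two branches agree on the separating plane, so $\mathcal{T}(N)$ is continuous.

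The main point needing care is the characteristic plane where the formula switches branches. There the pointwise equation must be justified, either from the assumed existence of partial derivatives or by reading the equation in the sense of derivatives along characteristics. Apart from that, the argument is a direct verification, as the statement suggests.
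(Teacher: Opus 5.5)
Your proposal is correct and follows the paper's route. The paper calls the statement immediate because $N=\mathcal{T}(N)$ means $N$ solves $(\Sigma^{N}_{\tau,\tau'})$, which is $\Sigma_{\tau,\tau'}$, taking for granted that $\mathcal{T}(M)$ is the unique solution of $(\Sigma^{M}_{\tau,\tau'})$; your two directions are the uniqueness and existence halves of that fact, checked along characteristics. One step to state carefully: $g'(s)=(\partial_t N_1+c\,\partial_x N_1)(s,x+c(s-t),y)$ is a chain rule, so it needs differentiability of $N_1$ (for instance continuous partials off the switching planes, as in the class $E$, plus continuity to reach characteristics lying on those planes or on $\partial\mathscr{P}$), not just existence of partial derivatives, a point the paper also leaves implicit.
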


\section{Continuity of one of the operator of the fixed-point problems}\label{sec:Continuity-of-the}
\begin{prop}
	\label{prop:Let--be}Let $C\left(\begin{array}{r}
		\mathscr{P}\end{array};\R\right)$ be equipped with the norm $\left\Vert u\right\Vert _{\infty}={\displaystyle \sup_{\left(t,x,y\right)\in\begin{array}{r}
				\mathscr{P}\end{array}}}\left|u\left(t,x,y\right)\right|$ and let $C\left(\begin{array}{r}
		\mathscr{P}\end{array};\R\right)^{4}$ be endowed with the norm $\left\Vert N\right\Vert ={\displaystyle \max_{1\leq i\leq4}}\left\Vert N_{i}\right\Vert _{\infty}$
	for $N=\left(N_{i}\right)^{4}_{i=1}\in C\left(\begin{array}{r}
		\mathscr{P}\end{array};\R\right)^{4}.$ Then the operator $\mathcal{\mathcal{T}}^{\sigma}$ is continuous. 
\end{prop}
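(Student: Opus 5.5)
The plan is to show that $\mathcal{T}^{\sigma}$ is locally Lipschitz (hence continuous) for $\|\cdot\|$. Fix $M\in C(\mathscr{P};\R)^{4}$ and let $M^{n}\to M$ in $\|\cdot\|$. Since $\big||a|-|b|\big|\le|a-b|$, we have $\||M^{n}_{i}|-|M_{i}|\|_{\infty}\le\|M^{n}-M\|$. Hence it suffices to control how each $\mathcal{T}^{\sigma}_{i}$ depends on $|M|$, and we may assume all $|M^{n}|$ and $|M|$ lie in a ball of radius $R=\|M\|+1$. I treat $\mathcal{T}^{\sigma}_{1}$ only; the other three components are identical after changing the characteristic direction ($x\pm c(s-t)$ or $y\pm c(s-t)$) and the boundary $a_{1},a_{2},b_{1},b_{2}$. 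It is convenient to rewrite $\mathcal{T}^{A,\sigma}_{1}$ and $\mathcal{T}^{B,\sigma}_{1}$ in the combined form
\[
\int_{t_{0}}^{t}e^{-\sigma\int_{s}^{t}\rho(|M|)(r,\cdot)dr}\,Q^{\sigma}_{1}(|M|)(s,\cdot)\,ds+\overline{N}(t,x,y)\,e^{-\sigma\int_{t_{0}}^{t}\rho(|M|)(r,\cdot)dr},
\]
where $t_{0}=\tau$ in region A and $t_{0}=t-(x-a_{1})/c$ in region B. The indicator splitting does not depend on $M$, so the regions A and B are the same for $M^{n}$ and $M$. Thus we only need a uniform pointwise estimate in each region separately, with $t-t_{0}\le\tau'-\tau$.

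The key elementary estimates are as follows.

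(i) $Q^{\sigma}_{1}$ is a quadratic polynomial in $|M|$. Hence
\[
\big|Q^{\sigma}_{1}(|M^{n}|)-Q^{\sigma}_{1}(|M|)\big|\le C_{1}(\sigma+cS)\,R\,\|M^{n}-M\|,
\]
with $C_{1}$ an absolute constant, and $|Q^{\sigma}_{1}(|M|)|\le C_{1}(\sigma+cS)R^{2}$.

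(ii) Since $\rho(|M|)\ge0$, every exponential factor lies in $(0,1]$. For $\alpha,\beta\ge0$ we have $|e^{-\alpha}-e^{-\beta}|\le|\alpha-\beta|$, which gives
\[
\Big|e^{-\sigma\int_{s}^{t}\rho(|M^{n}|)}-e^{-\sigma\int_{s}^{t}\rho(|M|)}\Big|\le4\sigma(\tau'-\tau)\|M^{n}-M\|.
\]

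(iii) The data $\overline{N^{\tau}_{1}}$ and $\overline{N^{-}_{1}}$ are bounded by $q$.

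Adding and subtracting mixed terms inside the integral then yields, uniformly in $(t,x,y)\in\mathscr{P}$,
\[
|\mathcal{T}^{\sigma}_{1}(M^{n})-\mathcal{T}^{\sigma}_{1}(M)|\le\Big[(\tau'-\tau)\big(C_{1}(\sigma+cS)R+4\sigma(\tau'-\tau)C_{1}(\sigma+cS)R^{2}\big)+4\sigma(\tau'-\tau)q\Big]\|M^{n}-M\|.
\]
This tends to $0$, and the same bound for $i=2,3,4$ gives $\|\mathcal{T}^{\sigma}(M^{n})-\mathcal{T}^{\sigma}(M)\|\to0$.

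The remaining point, and the step I expect to need the most care, is that $\mathcal{T}^{\sigma}$ actually maps into $C(\mathscr{P};\R)^{4}$. Within each closed region, continuity of $\mathcal{T}^{A,\sigma}_{1}$ and $\mathcal{T}^{B,\sigma}_{1}$ follows from continuity of the integrands and of the integration limits in $(t,x,y)$, via dominated convergence on a compact set. Across the plane $x-c(t-\tau)=a_{1}$, we have $t_{0}=\tau$ for both formulas, so the integral parts coincide. There $\overline{N^{\tau}_{1}}(t,x,y)=N^{\tau}_{1}(a_{1},y)$ and $\overline{N^{-}_{1}}(t,x,y)=N^{-}_{1}(\tau,y)$, and these are equal by the compatibility condition \eqref{eq:kqiiq-1}. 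So the two pieces glue continuously, and the same holds for the other components by \eqref{eq:lqoop-1}--\eqref{eq:looqp-1-1}. (One should also read the lower limit $\int_{0}^{s}$ in $\mathcal{T}^{A,\sigma}_{3}$ as $\int_{\tau}^{s}$, consistently with the other formulas.) This gives the continuity of $\mathcal{T}^{\sigma}$, indeed Lipschitz continuity on bounded sets.
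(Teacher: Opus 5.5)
Your argument is correct, but it follows a different route from the paper.

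\textbf{The paper's route.} The paper argues structurally. It writes each $\mathcal{T}^{\sigma}_{i}$ as $\mathbb{I}_{A_i}\Phi_{A_i}\Psi_{A_i}+\mathbb{I}_{B_i}\Phi_{B_i}\Psi_{B_i}$, with the factorizations $\Psi_{A_i}=\mathscr{E}\circ\Psi^{1}_{A_i}\circ\rho\circ|\cdot|$ and $\Phi_{A_i}=-\frac{1}{\sigma}\Psi^{1}_{A_i}\circ U_{A_i}+\widetilde{\mathscr{K}_{A_i}}$. Continuity then follows from that of the absolute value, of $\rho$, of the linear integration-along-characteristics map $\Psi^{1}_{A_i}$, of $u\mapsto\exp\circ u$, and of the quadratic map $Q^{\sigma}_{i}$. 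Each of these is simply declared easy to verify.

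\textbf{Your route.} You multiply out the two factors into a single damped Duhamel formula. You then use $\rho(|M|)\ge0$ to keep every exponential in $(0,1]$, which justifies $|e^{-\alpha}-e^{-\beta}|\le|\alpha-\beta|$ and gives an explicit Lipschitz constant on bounded sets.

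\textbf{What each buys.} Your approach gives a quantitative modulus. It also avoids the slightly informal functional setting of the paper's factorization: $U_{A_i}(M)$ carries the extra variable $s$, and $\Psi^{1}_{A_i}$, $\Psi^{1}_{B_i}$ only make sense on their own regions rather than on all of $\mathscr{P}$. The paper's approach is more modular and does not need the sign of $\rho$, since uniform continuity of $\exp$ on bounded sets suffices.

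\textbf{What you add.} You verify something the paper takes for granted when it declares $\mathcal{T}^{\sigma}:C(\mathscr{P},\R)^{4}\to C(\mathscr{P},\R)^{4}$. Namely, the two pieces glue continuously across $x-c(t-\tau)=a_{1}$ because of the compatibility conditions. This gluing tacitly reads the indicator formula as a piecewise definition. Taken literally, both indicators equal $1$ on that plane, and the formula would return $\mathcal{T}^{A,\sigma}_{1}+\mathcal{T}^{B,\sigma}_{1}$ there, so one of the inequalities should be strict. That is a defect of the paper's formula, not of your argument. Your remark that the lower limit $\int_{0}^{s}$ in $\mathcal{T}^{A,\sigma}_{3}$ must be read as $\int_{\tau}^{s}$ is also correct.
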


\begin{proof}
	1)The functions $\mathcal{T}^{\sigma}_{i}\left(M\right),i=1,2,3,4$
	(\eqref{aoalal}-\eqref{eq:olole-1-1-1-2}) can be written in the
	form 
	\begin{multline}
		\mathcal{T}^{\sigma}_{i}\left(M\right)\left(t,x,y\right)=\mathbb{I}_{A_{i}}\left(t,x,y\right)\cdot\Phi_{A_{i}}\left(M\right)\left(t,x,y\right)\cdot\Psi_{A_{i}}\left(M\right)\left(t,x,y\right)+\\
		\mathbb{I}_{B_{i}}\left(t,x,y\right)\cdot\Phi_{B_{i}}\left(M\right)\left(t,x,y\right)\cdot\Psi_{B_{i}}\left(M\right)\left(t,x,y\right)\label{eq:aolsmoap-1-2-1-1-1-1}
	\end{multline}
	with $A_{i}\cup B_{i}=\mathscr{P},$ and where 
	\begin{equation}
		\Phi_{A_{i}}\left(M\right)\left(t,x,y\right)=\int^{\nu_{A_{i}}\left(t,x,y\right)}_{\mu_{A_{i}}\left(t,x,y\right)}e^{\sigma\int^{s}_{\mu_{A_{i}}\left(t,x,y\right)}\rho\left(\left|M\right|\right)\circ\varphi^{i}\left(r,t,x,y\right)dr}\cdot
		Q^{\sigma}_{i}\left(\left|M\right|\right)\circ\varphi^{i}\left(s,t,x,y\right)ds+\mathscr{K}_{A_{i}}\left(t,x,y\right)\label{eq:aolsmoap-1-2-1-1-1}
	\end{equation}
	\begin{equation}
		\Psi_{A_{i}}\left(M\right)\left(t,x,y\right)=e^{-\sigma\int^{\nu_{A_{i}}\left(t,x,y\right)}_{\mu_{A_{i}}\left(t,x,y\right)}\rho\left(\left|M\right|\right)\circ\varphi^{i}\left(s,t,x,y\right)ds}\label{eq:iooslq}
	\end{equation}
	and $\Phi_{B_{i}},\Psi_{B_{i}}$ are obtained by replacing $A_{i}$
	with $B_{i}$ in the expressions \eqref{eq:aolsmoap-1-2-1-1-1} and
	\eqref{eq:iooslq}.
	
	2) We have the decomposition 
	\begin{equation}
		\Psi_{A_{i}}=\mathscr{E}\circ\Psi^{1}_{A_{i}}\circ\rho\circ\left|\cdot\right|\label{eq:ospieoj}
	\end{equation}
	where 
	\[
	\left|\cdot\right|:C\left(\begin{array}{r}
		\mathscr{P}\end{array};\R\right)^{4}\longrightarrow C\left(\begin{array}{r}
		\mathscr{P}\end{array};\R\right)^{4},M=\left(M_{i}\right)^{4}_{i=1}\longmapsto\left|M\right|=\left(\left|M_{i}\right|\right)^{4}_{i=1}
	\]
	\begin{align*}
		\rho & :C\left(\begin{array}{r}
			\mathscr{P}\end{array};\R\right)^{4}\longrightarrow C\left(\begin{array}{r}
			\mathscr{P}\end{array};\R\right),M\longmapsto\rho\left(M\right)=\sum^{4}_{i=1}M_{i}
	\end{align*}
	\[
	\Psi^{1}_{A_{i}}:C\left(\begin{array}{r}
		\mathscr{P}\end{array};\R\right)\longrightarrow C\left(\begin{array}{r}
		\mathscr{P}\end{array};\R\right),u\longmapsto\Psi^{1}_{A_{i}}\left(u\right)
	\]
	such that 
	\begin{equation}
		\Psi^{1}_{A_{i}}\left(u\right)\left(\eta_{1},\eta_{2},\eta_{3}\right)=-\sigma\int^{\nu_{A_{i}}\left(\eta_{1},\eta_{2},\eta_{3}\right)}_{\mu_{A_{i}}\left(\eta_{1},\eta_{2},\eta_{3}\right)}u\circ\varphi^{i}\left(s,\eta_{1},\eta_{2},\eta_{3}\right)ds\label{eq:oksiu}
	\end{equation}
	and 
	\begin{equation}
		\mathscr{E}:C\left(\begin{array}{r}
			\mathscr{P}\end{array};\R\right)\longrightarrow C\left(\begin{array}{r}
			\mathscr{P}\end{array};\R\right),u\longmapsto\mathscr{E}\left(u\right)=\exp\circ u
	\end{equation}
	It is easy to verify that $\left|\cdot\right|,\rho,\Psi^{1}_{A_{i}}$
	and $\mathscr{E}$ are continuous.
	
	3) We now have from \eqref{eq:aolsmoap-1-2-1-1-1} the decomposition
	\begin{equation}
		\Phi_{A_{i}}=-\dfrac{1}{\sigma}\Psi^{1}_{A_{i}}\circ U_{A_{i}}+\widetilde{\mathscr{K}_{A_{i}}}\label{eq:opqol}
	\end{equation}
	where $\widetilde{\mathscr{K}_{A_{i}}}:C\left(\mathscr{P};\R\right)\longrightarrow C\left(\mathscr{P};\R\right),$$u\longmapsto\mathscr{K}_{A_{i}}$
	and 
	\begin{equation}
		U_{A_{i}}\left(M\right)\left(\cdot\right)=e^{\sigma\int^{s}_{\mu_{A_{i}}\left(\cdot\right)}\rho\left(\left|M\right|\right)\circ\varphi^{i}\left(r,\cdot\right)dr}\cdot Q^{\sigma}_{i}\left(\left|M\right|\right)\circ\varphi^{i}\left(s,\cdot\right)
	\end{equation}
	that is with \eqref{eq:oksiu} 
	\begin{equation}
		U_{A_{i}}=\left[\left(\mathscr{E}\circ\left(-\Psi^{1}_{A_{i}}\circ\rho\right)\right)\cdot Q^{\sigma}_{i}\right]\circ\left|\cdot\right|
	\end{equation}
	We have $Q^{\sigma}_{i}\left(M\right)=\omega_{i}2cS\left(M_{2}M_{3}-M_{1}M_{4}\right)+\sigma\rho\left(M\right)M_{i}$
	with $\omega_{1}=\omega_{4}=1$ and $\omega_{2}=\omega_{3}=-1;$ thus
	$Q^{\sigma}_{i}$ is continuous. Hence from \eqref{eq:opqol}, we
	obtain that $\Phi_{A_{i}}$ is continuous. Similarly, $\Phi_{B_{i}}$
	is continuous. Then from \eqref{eq:aolsmoap-1-2-1-1-1-1} as we have
	$A_{i}\cup B_{i}=\mathscr{P},$ we can easily obtain that $\mathcal{T}^{\sigma}_{i}:C\left(\mathscr{P};\R\right)^{4}\longrightarrow C\left(\mathscr{P};\R\right)$
	is continuous. Finally we deduce that $\mathcal{T}^{\sigma}=\left(\mathcal{T}^{\sigma}_{i}\right)^{4}_{i=1}$
	is continuous. 
\end{proof}

\section{Uniqueness theorem}\label{sec:Uniqueness-theorem}
\begin{prop}
	For $M,N\in C\left(\begin{array}{r}
		\mathscr{P}\end{array};\R\right)^{4},$it holds 
	\begin{align}
		\left\Vert \mathcal{T}\left(M\right)-\mathcal{T}\left(N\right)\right\Vert  & \leq\left(\tau'-\tau\right)\cdot4cS\left(\left\Vert M\right\Vert +\left\Vert N\right\Vert \right)\left\Vert M-N\right\Vert .\label{aoalal-2-1-1-1-1-1-1-1-1-2-1-1-1}
	\end{align}
\end{prop}

\begin{proof}
	For $M,N\in C\left(\begin{array}{r}
		\mathscr{P}\end{array};\R\right)^{4},$ we have 
	\begin{equation}
		\left\Vert Q\left(M\right)-Q\left(N\right)\right\Vert _{\infty}\leq4cS\left(\left\Vert M\right\Vert +\left\Vert N\right\Vert \right)\left\Vert M-N\right\Vert ;\label{lopmo}
	\end{equation}
	this results \eqref{eq:jjdpoa} from 
	\begin{multline*}
		Q\left(M\right)-Q\left(N\right)=2cS\left(M_{2}-N_{2}\right)M_{3}+2cSN_{2}\left(M_{3}-N_{3}\right)\\
		-2cS\left(M_{1}-N_{1}\right)M_{4}-2cSN_{1}\left(M_{4}-N_{4}\right).
	\end{multline*}
	In the expression \eqref{aoalal-2}-\eqref{eq:olole-1-1-2} of $\mathcal{T}_{1}\left(M\right),$
	for $x-c\left(t-\tau\right)\geq a_{1},$ we have $\tau\leq t\leq\tau'$
	and for $x-c\left(t-\tau\right)\leq a_{1},$ we have $\tau\leq t-\frac{1}{c}x+\frac{a_{1}}{c}\leq t\leq\tau'$.
	Thus using \eqref{lopmo} we have 
	\begin{align}
		\left\Vert \mathcal{T}_{1}\left(M\right)-\mathcal{T}_{1}\left(N\right)\right\Vert _{\infty} & \leq\left(\tau'-\tau\right)\cdot4cS\left(\left\Vert M\right\Vert +\left\Vert N\right\Vert \right)\left\Vert M-N\right\Vert .\label{aoalal-2-1-1-1-1-1-1-1-1-2-1}
	\end{align}
	We have the same result for $\mathcal{T}_{i},i=2,3,4$ :
	
	\begin{align}
		\left\Vert \mathcal{T}_{i}\left(M\right)-\mathcal{T}_{i}\left(N\right)\right\Vert _{\infty} & \leq\left(\tau'-\tau\right)\cdot4cS\left(\left\Vert M\right\Vert +\left\Vert N\right\Vert \right)\left\Vert M-N\right\Vert .\label{aoalal-2-1-1-1-1-1-1-1-1-2-1-1}
	\end{align}
	Hence we have 
	\begin{align*}
		\left\Vert \mathcal{T}\left(M\right)-\mathcal{T}\left(N\right)\right\Vert  & \leq\left(\tau'-\tau\right)\cdot4cS\left(\left\Vert M\right\Vert +\left\Vert N\right\Vert \right)\left\Vert M-N\right\Vert .
	\end{align*}
\end{proof}

\begin{proof}[Proof of theorem \ref{thm:uniqueness}]
	Denote the restriction of a function $F$ to $\left[\eta,\eta'\right]\times\left[a_{1},b_{1}\right]\times\left[a_{2},b_{2}\right]$
	by $F_{\eta,\eta'}.$
	
	Suppose $M$ and $N$ are two continuous solutions to the problem
	$\Sigma_{\tau,\tau'}$(respectively $\Sigma$). Let $T<\infty$ such
	that $\left[\tau,T\right]\subset\left[\tau,\tau'\right]$(respectively
	$\left[0,T\right]\subset\left[0,\infty\right[$). Let $R\equiv\max\left\{ \left\Vert N_{\tau,T}\right\Vert ,\left\Vert M_{\tau,T}\right\Vert \right\} $(respectively
	$\max\left\{ \left\Vert N_{0,T}\right\Vert ,\left\Vert M_{0,T}\right\Vert \right\} $).
	Let $\left[\eta,\eta'\right]\subset\left[\tau,T\right]$ (respectively
	$\left[0,T\right]$). Denote by $\mathcal{T}_{\eta,\eta'}$ the operator
	such that a function $F$ is a solution of problem $\Sigma_{\eta,\eta'}$
	iff $F$ is a fixed point of $\mathcal{T}_{\eta,\eta'}.$ (see Proposition
	\ref{prop:Let-the-4-tuple}). Inequality \eqref{aoalal-2-1-1-1-1-1-1-1-1-2-1-1-1}
	writes 
	\begin{equation}
		\left\Vert \mathcal{T}_{\eta,\eta'}\left(M_{\eta,\eta'}\right)-\mathcal{T}_{\eta,\eta'}\left(N_{\eta,\eta'}\right)\right\Vert \leq\left(\eta'-\eta\right)\cdot4cS\left(\left\Vert M_{\eta,\eta'}\right\Vert +\left\Vert N_{\eta,\eta'}\right\Vert \right)\left\Vert M_{\eta,\eta'}-N_{\eta,\eta'}\right\Vert .\label{aoalal-2-1-1-1-1-1-1-1-1-2-1-1-1-1}
	\end{equation}
	But we have $\left\Vert M_{\eta,\eta'}\right\Vert \leq\left\Vert M_{\tau,T}\right\Vert $
	(resp. $\left\Vert M_{0,T}\right\Vert $) and $\left\Vert N_{\eta,\eta'}\right\Vert \leq\left\Vert N_{\tau,T}\right\Vert $
	(resp. $\left\Vert N_{0,T}\right\Vert $). Thus \ref{aoalal-2-1-1-1-1-1-1-1-1-2-1-1-1-1}
	implies 
	\begin{align}
		\left\Vert \mathcal{T}_{\eta,\eta'}\left(M_{\eta,\eta'}\right)-\mathcal{T}_{\eta,\eta'}\left(N_{\eta,\eta'}\right)\right\Vert  & \leq\left(\eta'-\eta\right)\cdot4cS\cdot2R\left\Vert M_{\eta,\eta'}-N_{\eta,\eta'}\right\Vert .\label{aoalal-2-1-1-1-1-1-1-1-1-2-1-1-1-1-1}
	\end{align}
	Hence if $\mathcal{T}_{\eta,\eta'}\left(M_{\eta,\eta'}\right)=M_{\eta,\eta'}$
	and $\mathcal{T}_{\eta,\eta'}\left(N_{\eta,\eta'}\right)=N_{\eta,\eta'},$
	we have $\left(1-\left(\eta'-\eta\right)\cdot8cSR\right)\left\Vert M_{\eta,\eta'}-N_{\eta,\eta'}\right\Vert \leq0.$
	
	Thus for all $\left[\eta,\eta'\right]\subset\left[\tau,T\right]$(respectively
	$\left[0,T\right]$) 
	\begin{equation}
		\begin{cases}
			\eta'-\eta\leq\dfrac{1}{8cSR}\\
			\mathcal{T}_{\eta,\eta'}\left(M_{\eta,\eta'}\right)=M_{\eta,\eta'}$ and $\mathcal{T}_{\eta,\eta'}\left(N_{\eta,\eta'}\right)=N_{\eta,\eta'}
		\end{cases}\implies M_{\eta,\eta'}=N_{\eta,\eta'}.\label{eq:ioslq}
	\end{equation}
	For the problem $\Sigma_{\tau,\tau'}$( resp. $\Sigma$) consider
	the integer $n$ such that $\tau+\dfrac{n-1}{16cSR}\leq T<\tau+\dfrac{n}{16cSR}$
	(resp. $\dfrac{n-1}{16cSR}\leq T<\dfrac{n}{16cSR}$ ) and $\eta_{k}=\tau+\dfrac{k}{16cSR},k\in\left\llbracket 0,n\right\rrbracket $
	(resp. $\eta_{k}=\dfrac{k}{16cSR},k\in\left\llbracket 0,n\right\rrbracket $).
	
	As $M$ and $N$ are solutions to the problem $\Sigma_{\tau,\tau'}$(respectively
	$\Sigma$), $M_{\tau,\eta_{1}}$ and $N_{\tau,\eta_{1}}$(resp. $M_{0,\eta_{1}}$
	and $N_{0,\eta_{1}}$) are solutions to the problem $\Sigma_{\tau,\eta_{1}}$(respectively
	$\Sigma_{0,\eta_{1}}$); thus $\mathcal{T}_{\tau,\eta_{1}}\left(M_{\tau,\eta_{1}}\right)=M_{\tau,\eta_{1}}$ and     $\mathcal{T}_{\tau,\eta_{1}}\left(N_{\tau,\eta_{1}}\right)=N_{\tau,\eta_{1}}$
	(resp. $\mathcal{T}_{0,\eta_{1}}\left(M_{0,\eta_{1}}\right)=M_{0,\eta_{1}}$ and $\mathcal{T}_{0,\eta_{1}}\left(N_{0,\eta_{1}}\right)=N_{0,\eta_{1}}$);
	that is $\mathcal{T}_{\eta_{0},\eta_{1}}\left(M_{\eta_{0},\eta_{1}}\right)=M_{\eta_{0},\eta_{1}}$ and  $\mathcal{T}_{\eta_{0},\eta_{1}}\left(N_{\eta_{0},\eta_{1}}\right)=N_{\eta_{0},\eta_{1}}.$
	We also have $\eta_{1}-\eta_{0}\leq\dfrac{1}{8cSR}.$ Then from \eqref{eq:ioslq},
	we deduce that $M_{\eta_{0},\eta_{1}}=N_{\eta_{0},\eta_{1}}.$
	
	Suppose by induction that $M_{\eta_{k},\eta_{k+1}}=N_{\eta_{k},\eta_{k+1}}$
	for $\tau\leq\eta_{k}<\eta_{k+1}<\eta_{k+2}\leq T$ (resp. $0\leq\eta_{k}<\eta_{k+1}<\eta_{k+2}\leq T$).
	Then we have $M\left(\eta_{k+1},x,y\right)=N\left(\eta_{k+1},x,y\right)$
	for all $\left(x,y\right)\in\left[a_{1},b_{1}\right]\times\left[a_{2},b_{2}\right].$
	Thus as $M$ and $N$ are solutions to the problem $\Sigma_{\tau,\tau'}$(respectively
	$\Sigma$), $M_{\eta_{k+1},\eta_{k+2}}$ and $N_{\eta_{k+1},\eta_{k+2}}$
	are solutions to the problem $\Sigma_{\eta_{k+1},\eta_{k+2}}$ with
	initial data $M_{\eta_{k},\eta_{k+1},i}\left(\eta_{k+1},\cdot,\cdot\right),\left(i=1,\cdots,4\right)$
	where $\left(M_{\eta_{k},\eta_{k+1},i}\right)^{4}_{i=1}=M_{\eta_{k},\eta_{k+1}}.$
	
	So we have $\mathcal{T}_{\eta_{k+1},\eta_{k+2}}\left(M_{\eta_{k+1},\eta_{k+2}}\right)=M_{\eta_{k+1},\eta_{k+2}}$
	and $\mathcal{T}_{\eta_{k+1},\eta_{k+2}}\left(N_{\eta_{k+1},\eta_{k+2}}\right)=N_{\eta_{k+1},\eta_{k+2}}.$
	As $\eta_{k+2}-\eta_{k+1}\leq\dfrac{1}{8cSR}$, it follows from \eqref{eq:ioslq}
	that $M_{\eta_{k+1},\eta_{k+2}}=N_{\eta_{k+1},\eta_{k+2}}.$
	
	We deduce that $M_{\eta_{k},\eta_{k+1}}=N_{\eta_{k},\eta_{k+1}}$
	for $k\in\left\llbracket 0,n-2\right\rrbracket .$ Thus $M\left(\eta_{n-1},x,y\right)=N\left(\eta_{n-1},x,y\right)$
	for all $\left(x,y\right)\in\left[a_{1},b_{1}\right]\times\left[a_{2},b_{2}\right].$
	Hence $M_{\eta_{n-1},T}$ and $N_{\eta_{n-1},T}$ are solutions to
	the problem $\Sigma_{\eta_{n-1},T}$ with initial data\\ $M_{\eta_{n-2},\eta_{n-1},i}\left(\eta_{n-1},\cdot,\cdot\right),\left(i=1,\cdots,4\right)$
	where $\left(M_{\eta_{n-2},\eta_{n-1},i}\right)^{4}_{i=1}=M_{\eta_{n-2},\eta_{n-1}}.$
	That means $\mathcal{T}_{\eta_{n-1},T}\left(M_{\eta_{n-1},T}\right)=M_{\eta_{n-1},T}$ and $ \mathcal{T}_{\eta_{n-1},T}\left(N_{\eta_{n-1},T}\right)=N_{\eta_{n-1},T}.$
	Then as $T-\eta_{n-1}\leq\dfrac{1}{8cSR}$ \eqref{eq:ioslq} yields
	$M_{\eta_{n-1},T}=N_{\eta_{n-1},T}.$ Now we conclude that $M_{\tau,T}=N_{\tau,T}$
	(resp. $M_{0,T}=N_{0,T}$). And as $T$ is chosen arbitrarily, we
	have $M=N.$ 
\end{proof}

\section{Convex set on which the operator is compact}\label{sec:Convex-set-on}
\begin{prop}
	: \label{cor::mzppzo-1}Suppose $M=\left(M_{i}\right)^{4}_{i=1}\in C\left(\begin{array}{r}
		\mathscr{P}\end{array};\R\right)^{4}$ such that $M\geq0$ and $\dfrac{\partial M_{i}}{\partial t},\dfrac{\partial M_{i}}{\partial x},\dfrac{\partial M_{i}}{\partial y},$
	$\left(i=1,2,3,4\right)$ are defined in $\mathring{\mathscr{P}},$
	except possibly on the planes with respective equations $x-c\left(t-\tau\right)=a_{1},y-c\left(t-\tau\right)=a_{2},y+c\left(t-\tau\right)=b_{2}$
	and $x+c\left(t-\tau\right)=b_{1}$, and are continuous and bounded.
	Then the derivatives\\ $\dfrac{\partial\mathcal{T}^{\sigma}_{i}\left(M\right)}{\partial t},\dfrac{\partial\mathcal{T}^{\sigma}_{i}\left(M\right)}{\partial x},\dfrac{\partial\mathcal{T}^{\sigma}_{i}\left(M\right)}{\partial y}$
	, $\left(i=1,2,3,4\right),$ are defined in $\mathring{\mathscr{P}},$
	except possibly on the planes with respective equations $x-c\left(t-\tau\right)=a_{1},y-c\left(t-\tau\right)=a_{2},y+c\left(t-\tau\right)=b_{2}$
	and $x+c\left(t-\tau\right)=b_{1}$, and are continuous and bounded. 
\end{prop}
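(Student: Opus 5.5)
We differentiate the explicit representation formulas \eqref{aoalal}--\eqref{eq:olole-1-1-1-2} directly. Take $i=1$; the other components are the same up to a change of the transport direction. On the open regions $\{x-c(t-\tau)>a_1\}$ and $\{x-c(t-\tau)<a_1\}$ of $\mathring{\mathscr{P}}$, the function $\mathcal{T}^{\sigma}_1(M)$ coincides with $\mathcal{T}^{A,\sigma}_1(M)$, respectively $\mathcal{T}^{B,\sigma}_1(M)$. So it suffices to show that each of these is $C^1$ with bounded derivatives on its region, away from the exceptional planes. The separating plane $x-c(t-\tau)=a_1$ is the only place where the gluing could fail to be differentiable, and it is allowed by the statement.

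The first step is to parametrize the characteristics. With $\varphi^1(s,t,x,y)=(s,x+c(s-t),y)$, the change of variable $s=t-\theta$ puts both $\mathcal{T}^{A,\sigma}_1$ and $\mathcal{T}^{B,\sigma}_1$ in the form
\[
\int_0^{L(t,x)} F\bigl(t-\theta,x-c\theta,y\bigr)\,e^{-\sigma\int_0^{\theta}\rho(M)(t-r,x-cr,y)\,dr}\,d\theta+\overline{K}(t,x,y)\,e^{-\sigma\int_0^{L}\rho(M)(t-r,x-cr,y)\,dr}.
\]
Here $F=Q^{\sigma}_1(M)$; since $M\ge0$ we have $|M|=M$, so $\rho(M)$ and $Q^\sigma_1(M)$ are polynomials in the $M_j$. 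The length is $L=t-\tau$ in region $A$ and $L=(x-a_1)/c$ in region $B$, and $\overline{K}$ is $\overline{N^{\tau}_1}$ or $\overline{N^{-}_1}$. Both $L$ are affine, so differentiating the upper limit contributes only boundary terms of the form (integrand at $\theta=L$) times a constant ($1$ or $1/c$). The data terms $\overline{K}$ have bounded continuous derivatives by hypothesis. Under the integrals, $\partial_t,\partial_x,\partial_y$ act on $F$ and $\rho(M)$ evaluated along the characteristic, and these derivatives are polynomial combinations of $M_j$ and $\partial M_j$, hence bounded.

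The main obstacle is justifying differentiation under the integral sign, because $\partial M_j$ exists only off the four planes. For fixed $(t,x,y)$, the segment $\theta\mapsto(t-\theta,x-c\theta,y)$ is contained in the plane $x-c(t-\tau)=\text{const}$, hence it lies in (or never meets) the plane $x-c(t-\tau)=a_1$. It is transversal to the other three planes, so it meets each of them in at most one value of $\theta$, i.e.\ in a null set. I would write each difference quotient as an integral of derivatives along segments (mean value or fundamental theorem of calculus, valid since $M_j$ is continuous and piecewise $C^1$ with bounded derivatives, hence Lipschitz), and then apply dominated convergence with the uniform bound on $\partial M_j$. For the $x$-derivative in region $B$, the whole segment may lie in the critical plane. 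To handle this, I would rewrite the integral so that the $x$-variation shifts the characteristic parallel to itself, which moves it off the plane except at the single point $x-c(t-\tau)=a_1$, which is excluded. Alternatively, I would use the Lipschitz property of $M$ together with a.e.\ differentiability along transversal directions.

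Continuity of the resulting derivative expressions off the planes then follows from continuity of the integrands in $(t,x,y)$ and dominated convergence again. Boundedness follows from $|e^{-\sigma\int\rho}|\le1$, $L\le\max(\tau'-\tau,(b_1-a_1)/c)$ and the bounds on $M$, $\partial M$ and the data. Finally, $i=2,3,4$ are handled identically, with characteristic directions $(1,0,c)$, $(1,0,-c)$ and $(1,-c,0)$ and the corresponding planes.
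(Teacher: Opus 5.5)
Your proposal is correct and follows essentially the paper's argument: you differentiate the explicit characteristic representation under the integral sign, using that the characteristic through an admissible point stays in a plane parallel to (hence disjoint from) its own exceptional plane and crosses the other three transversally at single points, and your Lipschitz plus dominated-convergence reasoning supplies the justification that the paper only asserts. The one slip is the aside on region $B$: there the characteristic lies in $\{x-c(t-\tau)=\mathrm{const}<a_1\}$, so it never meets the critical plane, and the proposed workaround is unnecessary.
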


\begin{proof}
	Let $\left(t,x,y\right)$ such that $x-c\left(t-\tau\right)\neq a_{1},$
	$y-c\left(t-\tau\right)\neq a_{2},$ $y+c\left(t-\tau\right)\neq b_{2}$
	and $x+c\left(t-\tau\right)\neq b_{1}.$ For all $s\in\R$ the point
	$\left(s,x+c\left(s-t\right),y\right)$ is not in the plane with equation
	$x-c\left(t-\tau\right)=a_{1}$; each one of the equations $y-c\left(t-\tau\right)=a_{2},$
	$y+c\left(t-\tau\right)=b_{2}$ and $x+c\left(t-\tau\right)=b_{1}$
	yields an unique value for $s.$ As $\left|M\right|=M,$ we obtain
	that the integrands in the formula \eqref{aoalal}-\eqref{eq:olole-1-1}
	are functions of the variable $s$ that are differentiable except
	in at most $4$ points. As these integrands are continuous in $\left(s,t,x,y\right)$
	and have all first order partial derivatives with respect to $t,x,y$
	that are continuous, we obtain that $\mathcal{T}^{\sigma}_{1}\left(M\right)$
	has all first order partial derivatives with respect to $t,x,y$ except
	possibly on the plane with equation $x-c\left(t-\tau\right)=a_{1}$
	and that we can differentiate under the integral sign, the integrands
	in $\mathcal{T}^{\sigma}_{1}\left(M\right).$ Hence, we obtain immediately
	that $\dfrac{\partial\mathcal{T}^{\sigma}_{1}\left(M\right)}{\partial t},\dfrac{\partial\mathcal{T}^{\sigma}_{1}\left(M\right)}{\partial x},\dfrac{\partial\mathcal{T}^{\sigma}_{1}\left(M\right)}{\partial y}$
	are continuous and bounded. We similarly obtain from \eqref{aoalal-1}-\eqref{eq:olole-1-1-1-2}
	that $\dfrac{\partial\mathcal{T}^{\sigma}_{i}\left(M\right)}{\partial t},\dfrac{\partial\mathcal{T}^{\sigma}_{i}\left(M\right)}{\partial x},\dfrac{\partial\mathcal{T}^{\sigma}_{i}\left(M\right)}{\partial y}$$\left(i=2,3,4\right)$
	are defined in $\mathring{\mathscr{P}},$ except possibly respectively
	in the planes with equations $y-c\left(t-\tau\right)=a_{2},y+c\left(t-\tau\right)=b_{2}$
	and $x+c\left(t-\tau\right)=b_{1}$, and are continuous and bounded. 
\end{proof}

\begin{rem}
	\label{loppz}Let us consider the subspace $E$ of $C\left(\begin{array}{r}
		\mathscr{P}\end{array};\R\right)$ consisting of functions $u$ that are continuous on $\mathscr{P}$
	such that $\dfrac{\partial u}{\partial t},\dfrac{\partial u}{\partial x},\dfrac{\partial u}{\partial y}$
	are defined in $\mathring{\mathscr{P}}$ except possibly in the planes
	with respective equations $x-c\left(t-\tau\right)=a_{1},y-c\left(t-\tau\right)=a_{2},y+c\left(t-\tau\right)=b_{2}$
	and $x+c\left(t-\tau\right)=b_{1}$, and are continuous and bounded.
	Then it follows from Proposition \ref{cor::mzppzo-1}that $\forall M\in E^{4},M\geq0,\mathcal{T}^{\sigma}\left(M\right)\in E^{4}.$ 
\end{rem}

\begin{notation} For $N=\left(N_{i}\right)^{4}_{i=1}\in E^{4},$
	put 
	\begin{multline}
		\mathscr{N}\left(N\right)\equiv\max_{1\leq i\leq4}\left\Vert N_{i}\right\Vert _{1}={\displaystyle \max_{1\leq i\leq4}\left\{ \left\Vert N_{i}\right\Vert _{\infty},\left\Vert \dfrac{\partial N_{i}}{\partial t}\right\Vert _{\infty},\left\Vert \dfrac{\partial N_{i}}{\partial x}\right\Vert _{\infty},\left\Vert \dfrac{\partial N_{i}}{\partial y}\right\Vert _{\infty}\right\} }\\
		=\max\left\{ \left\Vert N\right\Vert ,\left\Vert \dfrac{\partial N}{\partial t}\right\Vert ,\left\Vert \dfrac{\partial N}{\partial x}\right\Vert ,\left\Vert \dfrac{\partial N}{\partial y}\right\Vert \right\} .\label{eq:oolsosp}
	\end{multline}
\end{notation}
\begin{prop}
	\label{prop::odlp}For all $R>0,$ put $\mathscr{M}_{R}\equiv\left\{ N\in E^{4}:\mathscr{N}\left(N\right)\leq R\right\} .$
	The $\mathscr{M}_{R},\left(R>0\right)$ and consequently $\mathscr{M}^{+}_{R}\equiv\left\{ N\in E^{4}:N\geq0\text{ and }\mathscr{N}\left(N\right)\leq R\right\} $
	are non empty convex subsets of $C\left(\begin{array}{r}
		\mathscr{P}\end{array};\R\right)^{4}$ and are relatively compact in $\left(C\left(\begin{array}{r}
		\mathscr{P}\end{array};\R\right)^{4},\left\Vert \cdot\right\Vert \right).$ 
\end{prop}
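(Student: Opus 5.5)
Non-emptiness and convexity come first, then relative compactness via Arzelà--Ascoli. The zero 4-tuple lies in $\mathscr{M}^{+}_{R}$, so both sets are non empty. For convexity, take $N,N'\in\mathscr{M}_{R}$ and $\theta\in[0,1]$. The combination $\theta N+(1-\theta)N'$ is continuous. Its partial derivatives exist wherever those of $N$ and $N'$ do, which is off the four fixed planes $x-c(t-\tau)=a_{1}$, $y-c(t-\tau)=a_{2}$, $y+c(t-\tau)=b_{2}$, $x+c(t-\tau)=b_{1}$; since the exceptional set is the same for every element of $E$, this is where convexity of $E$ is used. There the derivatives equal $\theta\partial N+(1-\theta)\partial N'$, so they are continuous, and by the triangle inequality $\mathscr{N}(\theta N+(1-\theta)N')\le\theta\mathscr{N}(N)+(1-\theta)\mathscr{N}(N')\le R$. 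Non-negativity is preserved under convex combinations, so $\mathscr{M}^{+}_{R}=\mathscr{M}_{R}\cap\{N\ge0\}$ is convex as well. Relative compactness of $\mathscr{M}^{+}_{R}$ will follow from that of $\mathscr{M}_{R}$, since it is a subset.

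For relative compactness, note that $\mathscr{P}$ is compact and the norm $\|\cdot\|$ on $C(\mathscr{P};\R)^{4}$ is the max of sup norms. By Arzelà--Ascoli applied componentwise, it suffices to show that $\mathscr{M}_{R}$ is uniformly bounded and equicontinuous. Uniform boundedness is immediate: $\|N_{i}\|_{\infty}\le R$. Equicontinuity is the main point, and I will prove a uniform Lipschitz bound $|N_{i}(P)-N_{i}(P')|\le CR|P-P'|$ for $P,P'\in\mathscr{P}$, with $C$ depending only on the geometry (for instance $C=\sqrt{3}$).

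The obstacle is that $N_{i}$ is only known to be differentiable in $\mathring{\mathscr{P}}$ off four planes, so the mean value theorem cannot be applied directly along the segment $[P,P']$. I will handle this as follows. First take $P,P'\in\mathring{\mathscr{P}}$ with the segment $[P,P']$ not contained in any of the four planes. Then the segment meets the planes in at most four points $s_{1}<\dots<s_{k}$. On each open subsegment the function $g(s)=N_{i}(P+s(P'-P))$ is differentiable with $|g'(s)|\le\sqrt{3}R|P'-P|$, since the gradient has components bounded by $R$. Because $g$ is continuous on $[0,1]$, applying the mean value theorem on each closed piece (continuous on the closed piece, differentiable inside) and summing gives $|g(1)-g(0)|\le\sqrt{3}R|P'-P|$.

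It remains to remove the two restrictions. Segments lying inside one of the planes, and segments with an endpoint on $\partial\mathscr{P}$, are handled by perturbing the endpoints slightly to generic interior points, applying the bound there, and passing to the limit using continuity of $N_{i}$ on $\mathscr{P}$. This works because the set of pairs whose segment lies in a plane is negligible and $\mathscr{P}$ is convex with non-empty interior. The resulting Lipschitz estimate, uniform over $\mathscr{M}_{R}$, gives equicontinuity, and Arzelà--Ascoli then yields that $\mathscr{M}_{R}$, and hence $\mathscr{M}^{+}_{R}$, is relatively compact in $(C(\mathscr{P};\R)^{4},\|\cdot\|)$.
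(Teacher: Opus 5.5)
Your proposal is correct and follows the same route as the paper: the zero tuple gives non-emptiness, convexity follows because $E$ has a fixed exceptional set and $\mathscr{N}$ is a max of sup norms, and relative compactness comes from uniform boundedness plus equicontinuity via a mean-value bound, then Arzelà--Ascoli, with the subset argument for $\mathscr{M}^{+}_{R}$. Your piecewise mean-value argument along segments crossing the (at most four) exceptional planes, followed by approximation for degenerate segments, is more careful than the paper's version, which applies the mean value inequality only to segments avoiding the planes and then extends ``by continuity'' — a density argument that does not by itself reach nearby points lying on opposite sides of a plane.
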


\begin{proof}
	1) $\mathscr{M}_{R}$ is non empty for it contains the null function.
	Let $M,N\in\mathscr{M}_{R}.$ Let $\lambda\in\R$ such that $0\leq\lambda\leq1;$
	we have $\lambda M+\left(1-\lambda\right)N\in E^{4}.$ From \eqref{eq:oolsosp}
	we have $\mathscr{N}\left(\lambda M+\left(1-\lambda\right)N\right)\leq R.$
	Hence $\lambda M+\left(1-\lambda\right)N\in\mathscr{M}_{R}.$ Thus
	$\mathscr{M}_{R}$ is convex.
	
	2) $\mathcal{\mathcal{\mathscr{M}}}_{R}$ is bounded in $\left(C\left(\begin{array}{r}
		\mathscr{P}\end{array};\R\right)^{4},\left\Vert \cdot\right\Vert \right).$ Let us show that $\mathcal{\mathcal{\mathscr{M}}}_{R}$ is equicontinuous
	in
	
	$\left(C\left(\begin{array}{r}
		\mathscr{P}\end{array};\R\right)^{4},\left\Vert \cdot\right\Vert \right):$ $\dfrac{\partial M_{i}}{\partial t},\dfrac{\partial M_{i}}{\partial x},\dfrac{\partial M_{i}}{\partial y},$
	$\left(i=1,2,3,4\right)$ are uniformly bounded and continuous, when
	$M$ varies in $\mathcal{\mathscr{M}}_{R}$. Thus there exists a constant
	$b^{i}>0$ independent of $M$ such that $\forall M\in\mathcal{\mathscr{M}}_{R},\forall\left(t,x,y\right)$
	in the domain of definition of $\left(\dfrac{\partial M_{i}}{\partial t},\dfrac{\partial M_{i}}{\partial x},\dfrac{\partial M_{i}}{\partial y}\right)_{i},$
	we have $\left\Vert d\left(M_{i}\right)\left(t,x,y\right)\right\Vert _{\mathscr{L}\left(\R^{3},\R\right)}\leq b^{i}$
	where $d\left(M_{i}\right)\left(t,x,y\right)$ denotes the differential
	of $M_{i}$ at $\left(t,x,y\right)$ with $d\left(M_{i}\right)\left(t,x,y\right)\in\mathscr{L}\left(\R^{3},\R\right),$
	space of continuous linear functionals on $\R^{3}.$
	
	$\mathscr{P}$ is convex. For $\left(t,x,y\right),\left(t',x',y'\right)\in\mathscr{P},$
	if for all $\left(i=1,2,3,4\right)$ $\dfrac{\partial M_{i}}{\partial t},\dfrac{\partial M_{i}}{\partial x},\dfrac{\partial M_{i}}{\partial y}$
	are defined on the segment $\left[\left(t,x,y\right),\left(t',x',y'\right)\right]\equiv\left\{ \left(t,x,y\right)+\alpha\left(t'-t,x'-x,y'-y\right)\vert\left(0\leq\alpha\leq1\right)\right\} $,
	then by the mean value inequality, for all $i=1,2,3,4,$ 
	\begin{align}
		\left|M_{i}\left(t,x,y\right)-M_{i}\left(t',x',y'\right)\right| & \leq b^{i}\left\Vert \left(t,x,y\right)-\left(t',x',y'\right)\right\Vert _{\R^{3}}.
	\end{align}
	Thus for all $\varepsilon>0,$ with ${\displaystyle \eta_{\varepsilon}=\min_{1\leq i\leq4}\dfrac{\varepsilon}{b^{i}}},$we
	have
	
	\begin{align}
		\left\Vert \left(t,x,y\right)-\left(t',x',y'\right)\right\Vert _{\R^{3}} & <\eta_{\varepsilon}\implies\left\Vert M\left(t,x,y\right)-M\left(t',x',y'\right)\right\Vert _{\R^{4}}\leq\varepsilon.\label{eq:ayueacw-1-1}
	\end{align}
	Now, the set of points $\left(t,x,y\right)$ where $\dfrac{\partial M_{i}}{\partial t},\dfrac{\partial M_{i}}{\partial x},\dfrac{\partial M_{i}}{\partial y}$
	are not defined is included in a finite union of planes; hence the
	points where these derivatives are not defined, are adherents to the
	domain of $\left(\dfrac{\partial M_{i}}{\partial t},\dfrac{\partial M_{i}}{\partial x},\dfrac{\partial M_{i}}{\partial y}\right)_{i}.$
	Thus, by the continuity of $M$ on $\mathscr{P}$, the relation \eqref{eq:ayueacw-1-1}
	can be extended to $\mathscr{P}$: 
	\begin{multline}
		\forall\varepsilon>0,\exists\eta_{\varepsilon}>0,\forall M\in\mathscr{M}_{R},\forall\left(t,x,y\right),\left(t',x',y'\right)\in\mathscr{P}:\\
		\left\Vert \left(t,x,y\right)-\left(t',x',y'\right)\right\Vert _{\R^{3}}<\eta_{\varepsilon}\implies\\
		\left\Vert M\left(t,x,y\right)-M\left(t',x',y'\right)\right\Vert _{\R^{4}}\leq\varepsilon.
	\end{multline}
	Thus $\mathscr{M}_{R}$ is equicontinuous in $\left(C\left(\begin{array}{r}
		\mathscr{P}\end{array};\R\right)^{4},\left\Vert \cdot\right\Vert \right).$ By the Arzelà-Ascoli's theorem, $\mathscr{M}_{R}$ is relatively
	compact in $\left(C\left(\begin{array}{r}
		\mathscr{P}\end{array};\R\right)^{4},\left\Vert \cdot\right\Vert \right)$ . \\
	3) $\mathscr{M}^{+}_{R}$ is non-empty, convex and relatively compact
	as the intercession of two convex sets containing the null function,
	one being relatively convex.
\end{proof}

Let $R_{0}>0$ be fixed. In the sequel, we consider the $M$ such
that $\mathscr{N}\left(M\right)\leq R_{0}$ and we suppose that $\tau'-\tau\leq1.$ 
\begin{prop}
	\label{prop::opps-1} There exists parameters $p_{\sigma},q_{\sigma}$,
	( $p_{\sigma}$ dependent of $\tau'-\tau$ , and $q_{\sigma}$ dependent
	of the data) such that $\forall R\leq R_{0},$ 
	\begin{equation}
		\mathcal{T}^{\sigma}\left(\mathscr{M}^{+}_{R}\right)\subset\mathscr{M}_{p_{\sigma}R^{2}+q_{\sigma}}.\label{oloeppe}
	\end{equation}
	The operator $\mathcal{T}^{\sigma}$ is compact on $\mathscr{M}^{+}_{R}$
	for all $R>0$ such that $R\leq R_{0}.$ 
\end{prop}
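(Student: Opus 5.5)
The plan is to estimate separately $\left\Vert \mathcal{T}^{\sigma}_{i}(M)\right\Vert _{\infty}$ and the sup norms of its three first-order derivatives, for $M\in\mathscr{M}^{+}_{R}$ with $R\leq R_{0}$ and $\tau'-\tau\leq1$. Since $M\geq0$ we have $\left|M\right|=M$, $0\leq\rho(M)\leq4R$, and from the explicit expansions of $Q^{\sigma}_{i}(\left|M\right|)$ given before Proposition \ref{prop:pqmm} we get $0\leq Q^{\sigma}_{i}(M)\leq c_{1}(\sigma+cS)R^{2}$ with an absolute constant $c_{1}$. We also use the derivative bounds $\left|\partial Q^{\sigma}_{i}(M)\right|,\left|\partial\rho(M)\right|\lesssim(\sigma+cS)R^{2}$ and $\lesssim R$, valid off the exceptional planes. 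We treat the representative case $i=1$, with the two pieces $\mathcal{T}^{A,\sigma}_{1}$ and $\mathcal{T}^{B,\sigma}_{1}$; the other components follow by the symmetries $x\leftrightarrow y$ and $c\leftrightarrow-c$.

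For the sup bound, we write $\mathcal{T}^{A,\sigma}_{1}(M)=\int_{\tau}^{t}e^{-\sigma\int_{s}^{t}\rho\,dr}Q^{\sigma}_{1}\,ds+\overline{N^{\tau}_{1}}e^{-\sigma\int_{\tau}^{t}\rho}$. Since the exponents are $\leq0$, this gives $\left|\mathcal{T}^{A,\sigma}_{1}\right|\leq(\tau'-\tau)c_{1}(\sigma+cS)R^{2}+q$, and the same bound holds for the $B$ piece.

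For the derivatives, we differentiate under the integral sign, as justified in Proposition \ref{cor::mzppzo-1}. The derivatives fall on three places. On the data term they produce $\left\Vert \overline{N^{\tau}_{1}}\right\Vert _{1}\leq q$, multiplied by the derivative of the exponential factor, which is bounded by $\sigma\cdot4R\cdot(1+c)(\tau'-\tau)$ plus boundary-term contributions. This is where $q_{\sigma}=q(1+\delta\sigma R_{0})$ comes from: we bound $R\leq R_{0}$ in this linear-in-$R$ factor. On the integrand they produce terms of size $(\sigma+cS)R^{2}$, times either $(\tau'-\tau)$ or $(\tau'-\tau)\cdot\sigma R\cdot(\tau'-\tau)$, from differentiating the inner exponential. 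Finally, for the $B$ piece the lower limit $t-(x-a_{1})/c$ depends on $(t,x)$. Differentiating it gives a boundary term equal to $Q^{\sigma}_{1}(M)$ evaluated at the inflow point, which is $\leq c_{1}(\sigma+cS)R^{2}$, times $1$ or $1/c$. It also produces the derivative of $\overline{N^{-}_{1}}$ through the chain rule, which is already included in $\left\Vert \overline{N^{-}_{1}}\right\Vert _{1}$. Using $\tau'-\tau\leq1$ and $R\leq R_{0}$ to absorb the higher-order factors, each derivative is bounded by $\left(\mu+\lambda\sigma R_{0}(\tau'-\tau)\right)(\sigma+cS)R^{2}+q(1+\delta\sigma R_{0})$. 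This is exactly $p_{\sigma}R^{2}+q_{\sigma}$, and it proves \eqref{oloeppe}. Here $\mu$ collects the $\tau'-\tau$, $1$ and $1/c$ factors; $\lambda$ collects the terms involving the derivative of the exponential; and $\delta$ collects the $4\sigma R_{0}(1+c+1/c)$ type contributions on the data.

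\textbf{Main obstacle.} The hardest step is the derivative bookkeeping. The constants must come out exactly as $\mu,\lambda,\delta$, and this requires treating carefully the moving lower limit in the $B$ pieces, the chain-rule factors $c$ from the characteristic $(s,x+c(s-t),y)$, and the fact that $\partial_{t}$ and $\partial_{x}$ act on both the limits and the integrand. If the exact constants do not match, I would settle for some explicit $p_{\sigma},q_{\sigma}$ of the same form, since the statement only asserts existence.

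For compactness, Remark \ref{loppz} and the estimate above give $\mathcal{T}^{\sigma}(\mathscr{M}^{+}_{R})\subset\mathscr{M}_{p_{\sigma}R^{2}+q_{\sigma}}$. By Proposition \ref{prop::odlp} this set is relatively compact in $(C(\mathscr{P};\R)^{4},\left\Vert \cdot\right\Vert )$. Combined with the continuity of $\mathcal{T}^{\sigma}$ from Proposition \ref{prop:Let--be}, this shows that $\mathcal{T}^{\sigma}$ is compact on $\mathscr{M}^{+}_{R}$.
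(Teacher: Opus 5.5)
Your proposal is correct and follows essentially the same route as the paper. You bound $\mathcal{T}^{A,\sigma}_{1}$, $\mathcal{T}^{B,\sigma}_{1}$ and their first derivatives directly by differentiating under the integral sign, with the moving lower limit supplying the $1/c$ terms. You then absorb the higher-order factors using $\tau'-\tau\le 1$ and $R\le R_{0}$ to reach $p_{\sigma}R^{2}+q_{\sigma}$, and deduce compactness from the relative compactness of $\mathscr{M}_{p_{\sigma}R^{2}+q_{\sigma}}$ (Proposition \ref{prop::opps-1}'s predecessor, Proposition \ref{prop::odlp}).

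One small bookkeeping slip: the $t$-derivative of the inner exponent $-\sigma\int_{s}^{t}\rho\,dr$ also produces the boundary term $-\sigma\rho(t,x,y)$. That integrand contribution therefore carries a single factor $(\tau'-\tau)$, not $(\tau'-\tau)^{2}$. It is still exactly of the form $\lambda\sigma R_{0}(\tau'-\tau)(\sigma+cS)R^{2}$, so your final bound is unaffected.
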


\begin{proof}
	It holds from Remark \ref{loppz} that $M\in\mathscr{M}^{+}_{R}\subset E^{4}\implies\mathcal{T}^{\sigma}\left(M\right)\in E^{4}.$
	\\
	A) Let $M\in\mathscr{M}^{+}_{R}.$ $\dfrac{\partial\left|M\right|}{\partial t},\dfrac{\partial\left|M\right|}{\partial x},\dfrac{\partial\left|M\right|}{\partial y}$
	exist.\\
	1) From \eqref{eq:kiid}, \eqref{eq:opaole} and \eqref{eq:oolsosp}, we have, $\left\Vert Q^{\sigma}_{i}\left(\left|M\right|\right)\right\Vert _{\infty}\leq\left(4\sigma+4cS\right)\left(\mathscr{N}\left(M\right)\right)^{2},$\\ $\left\Vert \left(\dfrac{\partial}{\partial t},\dfrac{\partial}{\partial x},\dfrac{\partial}{\partial y}\right)Q^{\sigma}_{i}\left(\left|M\right|\right)\right\Vert \leq\left(8\sigma+8cS\right)\left(\mathscr{N}\left(M\right)\right)^{2},$  $\left\Vert \rho\left(\left|M\right|\right)\right\Vert _{\infty}\leq4\mathscr{N}\left(M\right)$ 
	and \\ $\left\Vert \left(\dfrac{\partial}{\partial t},\dfrac{\partial}{\partial x},\dfrac{\partial}{\partial y}\right)\rho\left(\left|M\right|\right)\right\Vert \leq4\mathscr{N}\left(M\right).$ 
	Thus \ref{eq:olole-1} yields
	
	\begin{equation}
		\left\Vert \mathcal{T}^{A,\sigma}_{1}\left(M\right)\right\Vert _{\infty}\leq\left(\tau'-\tau\right)\left(4\sigma+4cS\right)\left(\mathscr{N}\left(M\right)\right)^{2}+\left\Vert \overline{N^{\tau}_{1}}\right\Vert _{1}\label{eq:hqiqok-2-1}
	\end{equation}
	\begin{multline}
		\left\Vert \dfrac{\partial}{\partial t}\mathcal{T}^{A,\sigma}_{1}\left(M\right)\right\Vert _{\infty}\leq\left(4\sigma+4cS\right)\left(\mathscr{N}\left(M\right)\right)^{2}+
		\left(\tau'-\tau\right)\left\{ \right.\sigma\left(\right.4\mathscr{N}\left(M\right)+\left(\tau'-\tau\right)c\cdot4\mathscr{N}\left(M\right)\left.\right)\cdot\\
		\left(4\sigma+4cS\right)\left(\mathscr{N}\left(M\right)\right)^{2}+c\left(8\sigma+8cS\right)\left(\mathscr{N}\left(M\right)\right)^{2}\left.\right\} +\\
		\sigma\left(\right.4\mathscr{N}\left(M\right)+\left(\tau'-\tau\right)c\cdot4\mathscr{N}\left(M\right)\left.\right)\cdot\left\Vert \overline{N^{\tau}_{1}}\right\Vert _{1}+\left\Vert \overline{N^{\tau}_{1}}\right\Vert _{1}\label{eq:oookkz-2-5}
	\end{multline}
	(then using $\mathscr{N}\left(M\right)\leq R_{0}$ and $\tau'-\tau\leq1$
	)
	
	\begin{multline}
		\left\Vert \dfrac{\partial}{\partial t}\mathcal{T}^{A,\sigma}_{1}\left(M\right)\right\Vert _{\infty}\leq
		\left[\right.4+8c+\left(\right.16+16c\left.\right)\sigma R_{0}\left(\tau'-\tau\right)\left.\right]\left(\sigma+cS\right)\left(\mathscr{N}\left(M\right)\right)^{2}+\\
		\left[\right.1+\left(\right.4+4c\left.\right)\sigma R_{0}\left.\right]\left\Vert \overline{N^{\tau}_{1}}\right\Vert _{1}\label{eq:oookkz-2-4}
	\end{multline}
	
	\begin{multline}
		\left\Vert \dfrac{\partial}{\partial x}\mathcal{T}^{A,\sigma}_{1}\left(M\right)\right\Vert _{\infty}\leq \\ \left(\tau'-\tau\right)\left\{ \right.\sigma\cdot\left(\tau'-\tau\right)\cdot4\mathscr{N}\left(M\right)
		\left(4\sigma+4cS\right)\left(\mathscr{N}\left(M\right)\right)^{2}+\left(8\sigma+8cS\right)\left(\mathscr{N}\left(M\right)\right)^{2}\left.\right\} +\\
		\sigma\left(\tau'-\tau\right)\cdot4\mathscr{N}\left(M\right)\left\Vert \overline{N^{\tau}_{1}}\right\Vert _{1}+\left\Vert \overline{N^{\tau}_{1}}\right\Vert _{1}\label{eq:oookkz-2-5-1}
	\end{multline}
	
	(then using $\mathscr{N}\left(M\right)\leq R_{0}$ and $\tau'-\tau\leq1$
	) 
	\begin{equation}
		\left\Vert \dfrac{\partial}{\partial x}\mathcal{T}^{A,\sigma}_{1}\left(M\right)\right\Vert _{\infty}\leq\left[\right.8+16\sigma R_{0}\left(\tau'-\tau\right)\left.\right]\left(\sigma+cS\right)\left(\mathscr{N}\left(M\right)\right)^{2}+\\
		\left[\right.1+4\sigma R_{0}\left.\right]\left\Vert \overline{N^{\tau}_{1}}\right\Vert _{1},\label{eq:oookkz-2-1-3}
	\end{equation}
	and similarly 
	\begin{equation}
		\left\Vert \dfrac{\partial}{\partial y}\mathcal{T}^{A,\sigma}_{1}\left(M\right)\right\Vert _{\infty}\leq\left[\right.8+16\sigma R_{0}\left(\tau'-\tau\right)\left.\right]\left(\sigma+cS\right)\left(\mathscr{N}\left(M\right)\right)^{2}+
		\left[\right.1+4\sigma R_{0}\left.\right]\left\Vert \overline{N^{\tau}_{1}}\right\Vert _{1}.\label{eq:oookkz-2-1-3-1}
	\end{equation}
	Now \eqref{eq:hqiqok-2-1}, \eqref{eq:oookkz-2-4}, \eqref{eq:oookkz-2-1-3}
	and \eqref{eq:oookkz-2-1-3-1} yield 
	\begin{multline}
		\left\Vert \mathcal{T}^{A,\sigma}_{1}\left(M\right)\right\Vert _{1}\leq
		\left[\right.8+8c+\left(\right.16+16c\left.\right)\sigma R_{0}\left(\tau'-\tau\right)\left.\right]\left(\sigma+cS\right)\left(\mathscr{N}\left(M\right)\right)^{2}+\\
		\left[\right.1+\left(\right.4+4c\left.\right)\sigma R_{0}\left.\right]\left\Vert \overline{N^{\tau}_{1}}\right\Vert _{1}.\label{eq:oookkz-2-2-3}
	\end{multline}
	2) From \eqref{eq:olole-1-1} we have by similar calculations and
	estimations 
	\begin{equation}
		\left\Vert \mathcal{T}^{B,\sigma}_{1}\left(M\right)\right\Vert _{\infty}\leq\left(\tau'-\tau\right)\left(4\sigma+4cS\right)\left(\mathscr{N}\left(M\right)\right)^{2}+\left\Vert \overline{N^{-}_{1}}\right\Vert _{1}\label{eq:omplo-1}
	\end{equation}
	\begin{multline}
		\left\Vert \dfrac{\partial}{\partial t}\mathcal{T}^{B,\sigma}_{1}\left(M\right)\right\Vert _{\infty}\leq\left[\right.8+8c+\left(\right.16+16c\left.\right)\sigma R_{0}\left(\tau'-\tau\right)\left.\right]
		\left(\sigma+cS\right)\left(\mathscr{N}\left(M\right)\right)^{2}+\\ \left[\right.1+\left(\right.8+4c\left.\right)\sigma R_{0}\left.\right]\left\Vert \overline{N^{-}_{1}}\right\Vert _{1}\label{eq:oookkz-2-3-1}
	\end{multline}
	\begin{multline}
		\left\Vert \dfrac{\partial}{\partial x}\mathcal{T}^{B,\sigma}_{1}\left(M\right)\right\Vert _{\infty}\leq\left[\right.\dfrac{4}{c}+8+16\sigma R_{0}\left(\tau'-\tau\right)\left.\right]\left(\sigma+cS\right)\left(\mathscr{N}\left(M\right)\right)^{2}+\\
		\left[\right.1+\left(\dfrac{4}{c}+4\right)\sigma R_{0}\left.\right]\left\Vert \overline{N^{-}_{1}}\right\Vert _{1}\label{eq:oookkz-2-1-2-1}
	\end{multline}
	\begin{equation}
		\left\Vert \dfrac{\partial}{\partial y}\mathcal{T}^{B,\sigma}_{1}\left(M\right)\right\Vert _{\infty}\leq\left[\right.8+16\sigma R_{0}\left(\tau'-\tau\right)\left.\right]\left(\sigma+cS\right)\left(\mathscr{N}\left(M\right)\right)^{2}+
		\left(1+4\sigma R_{0}\right)\left\Vert \overline{N^{-}_{1}}\right\Vert _{1}.\label{eq:oookkz-2-1-1-1-1}
	\end{equation}
	From \eqref{eq:omplo-1} -\eqref{eq:oookkz-2-1-1-1-1} we have 
	\begin{multline}
		\left\Vert \mathcal{T}^{B,\sigma}_{1}\left(M\right)\right\Vert _{1}\leq\left[\right.\dfrac{4}{c}+8+8c+
		\left(\right.16+16c\left.\right)\sigma R_{0}\left(\tau'-\tau\right)\left.\right]\left(\sigma+cS\right)\left(\mathscr{N}\left(M\right)\right)^{2}+\\
		\left[\right.1+\left(\right.\dfrac{4}{c}+8+4c\left.\right)\sigma R_{0}\left.\right]\left\Vert \overline{N^{-}_{1}}\right\Vert _{1}.\label{eq:oookkz-2-2-2-3}
	\end{multline}
	3) From \eqref{eq:oookkz-2-2-3} and \eqref{eq:oookkz-2-2-2-3} with
	\eqref{aoalal} we deduce that 
	\begin{multline}
		\left\Vert \mathcal{T}^{\sigma}_{1}\left(M\right)\right\Vert _{1}\leq
		\left[\right.\dfrac{4}{c}+8+8c+\left(\right.16+16c\left.\right)\sigma R_{0}\left(\tau'-\tau\right)\left.\right]\left(\sigma+cS\right)\left(\mathscr{N}\left(M\right)\right)^{2}+\\
		\left[\right.1+\left(\right.\dfrac{4}{c}+8+4c\left.\right)\sigma R_{0}\left.\right]\max\left\{ \right.\left\Vert \overline{N^{\tau}_{1}}\right\Vert _{1},\left\Vert \overline{N^{-}_{1}}\right\Vert _{1}\left.\right\} .\label{eq:oookkz-2-2-2-2-2}
	\end{multline}
	By analogy (\eqref{aoalal}-\eqref{eq:olole-1-1-1-2}) we have for
	$i=2,3,4$ 
	\begin{multline}
		\left\Vert \mathcal{T}^{\sigma}_{i}\left(M\right)\right\Vert _{1}\leq
		\left[\right.\dfrac{4}{c}+8+8c+\left(\right.16+16c\left.\right)\sigma R_{0}\left(\tau'-\tau\right)\left.\right]\left(\sigma+cS\right)\left(\mathscr{N}\left(M\right)\right)^{2}+\\
		\left[\right.1+\left(\right.\dfrac{4}{c}+8+4c\left.\right)\sigma R_{0}\left.\right]
		\max\left\{ \right.\left\Vert \overline{N^{\tau}_{i}}\right\Vert _{1},\left\Vert \overline{N^{-}_{i}}\right\Vert _{1}\left.\right\} \label{eq:oookkz-2-2-2-2-2-1}
	\end{multline}
	or 	
	\begin{multline}
		\left\Vert \mathcal{T}^{\sigma}_{i}\left(M\right)\right\Vert _{1}\leq
		\left[\right.\dfrac{4}{c}+8+8c+\left(\right.16+16c\left.\right)\sigma R_{0}\left(\tau'-\tau\right)\left.\right]\left(\sigma+cS\right)\left(\mathscr{N}\left(M\right)\right)^{2}+\\
		\left[\right.1+\left(\right.\dfrac{4}{c}+8+4c\left.\right)\sigma R_{0}\left.\right]
		\max\left\{ \right.\left\Vert \overline{N^{\tau}_{i}}\right\Vert _{1},\left\Vert \overline{N^{+}_{i}}\right\Vert _{1}\left.\right\} .\label{eq:oookkz-2-2-2-2-2-1-1}
	\end{multline}
	Now from \eqref{eq:oookkz-2-2-2-2-2}-\eqref{eq:oookkz-2-2-2-2-2-1-1}
	and \eqref{eq:oolsosp}, we obtain that
	
	\begin{multline}
		\mathscr{N}\left(\mathcal{T}^{\sigma}\left(M\right)\right)\leq\left[\right.\dfrac{4}{c}+8+8c+
		\left(\right.16+16c\left.\right)\sigma R_{0}\left(\tau'-\tau\right)\left.\right]\left(\sigma+cS\right)\left(\mathscr{N}\left(M\right)\right)^{2}+\\
		\left[\right.1+\left(\right.\dfrac{4}{c}+8+4c\left.\right)\sigma R_{0}\left.\right]\cdot
		\max_{1\leq i\leq4}\Biggl\{\left\Vert \overline{N^{\tau}_{i}}\right\Vert _{1},\left\Vert \overline{N^{-}_{1}}\right\Vert _{1},\left\Vert \overline{N^{-}_{2}}\right\Vert _{1},
		\left\Vert \overline{N^{+}_{3}}\right\Vert _{1},\left\Vert \overline{N^{+}_{4}}\right\Vert _{1}\Biggr\}.\label{eq:oookkz-2-2-2-2-1}
	\end{multline}
	B) We conclude from \eqref{eq:oookkz-2-2-2-2-1} that for $M\in\mathscr{M}^{+}_{R},$$\mathscr{N}\left(\mathcal{T}^{\sigma}\left(M\right)\right)\leq p_{\sigma}R^{2}+q_{\sigma}$
	where 
	
	\begin{align}
		p_{\sigma} & \equiv\left[\right.8+\dfrac{4}{c}+8c+\left(\right.16+16c\left.\right)\sigma R_{0}\left(\tau'-\tau\right)\left.\right]\left(\sigma+cS\right)\label{eq:olqpq}
	\end{align}
	\begin{align}
		q_{\sigma} & \equiv\left[\right.1+\left(\right.\dfrac{4}{c}+8+4c\left.\right)\sigma R_{0}\left.\right]q\label{eq:qoplq}
	\end{align}
	with 
	\begin{align}
		q & \equiv\max_{1\leq i\leq4}\Biggl\{\left\Vert \overline{N^{\tau}_{i}}\right\Vert _{1},\left\Vert \overline{N^{-}_{1}}\right\Vert _{1},\left\Vert \overline{N^{-}_{2}}\right\Vert _{1},\left\Vert \overline{N^{+}_{3}}\right\Vert _{1},\left\Vert \overline{N^{+}_{4}}\right\Vert _{1}\Biggr\}.\label{eq:loso-1-1}
	\end{align}
	Thus $\mathcal{T}^{\sigma}\left(\mathscr{M}^{+}_{R}\right)\subset\mathscr{M}_{p_{\sigma}R^{2}+q_{\sigma}}$.
	But $\mathscr{M}_{p_{\sigma}R^{2}+q_{\sigma}}$ is relatively compact
	in $\left(C\left(\begin{array}{r}
		\mathscr{P}\end{array};\R\right),\left\Vert \cdot\right\Vert \right)^{4},$ hence $\mathcal{T}^{\sigma}$ is respectively compact on $\mathscr{M}^{+}_{R}$
	$\left(\forall R\leq R_{0}\right).$ 
\end{proof}

\section{Convex set stable under the operator}

Let $R\leq R_{0}$ and $\tau'-\tau\leq1.$ Put 
\begin{equation}
	\begin{cases}
		p_{\sigma}\equiv\left(\mu+\lambda\sigma R_{0}\left(\tau'-\tau\right)\right)\left(\sigma+cS\right)\\
		q_{\sigma}\equiv q\left(1+\delta\sigma R_{0}\right)
	\end{cases}\label{eq:iiaoldp}
\end{equation}
with $\mu\equiv8+\dfrac{4}{c}+8c$, $\lambda\equiv16+16c$ and $\delta\equiv\dfrac{4}{c}+8+4c.$ 
\begin{lem}
	\label{lem:For--sufficiently}For $\sigma$ sufficiently large, we
	have 
	\begin{equation}
		\dfrac{1}{4\mu\left(1+\delta\sigma R_{0}\right)\left(\sigma+cS\right)}\leq\dfrac{R_{0}}{2\left(1+\delta\sigma R_{0}\right)}.
	\end{equation}
\end{lem}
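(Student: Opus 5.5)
The inequality is elementary. The common positive factor $1+\delta\sigma R_{0}$ appears in both denominators, so I would first multiply both sides by $2\left(1+\delta\sigma R_{0}\right)>0$. This reduces the claim to the equivalent inequality
\[
\dfrac{1}{2\mu\left(\sigma+cS\right)}\leq R_{0},
\]
which can be rewritten as
\[
\sigma\geq\dfrac{1}{2\mu R_{0}}-cS.
\]
Here $\mu=8+\frac{4}{c}+8c>0$, $c,S>0$ and $R_{0}>0$ are fixed constants independent of $\sigma$. So the right-hand side is a fixed real number, and every $\sigma$ exceeding it satisfies the reduced inequality. Reversing the equivalences then gives the stated inequality.

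The order of steps is therefore: note that all factors are positive, cancel $1+\delta\sigma R_{0}$, and solve the resulting linear inequality in $\sigma$. To be compatible with the earlier requirement $\sigma>2cS$ (needed for non-negativity of $\mathcal{T}^{\sigma}$ and in Proposition \ref{prop:pqmm}), I would make the threshold explicit:
\[
\sigma\geq\max\left\{ 2cS,\dfrac{1}{2\mu R_{0}}-cS\right\}.
\]
This makes "sufficiently large" concrete and consistent with the rest of the paper.

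There is essentially no obstacle. The only point needing care is positivity, so that multiplying through does not flip the inequality; this holds because $\sigma,R_{0},\delta,\mu,c,S>0$. The role of the lemma is presumably to guarantee that the smallness condition on $q$ in Theorem \ref{thm:Suppose-.-Then} is compatible with $R\leq R_{0}$. It is also worth noting in passing that the left side decays like $\sigma^{-2}$ while the right side decays only like $\sigma^{-1}$, which is another way to see that the inequality holds for large $\sigma$.
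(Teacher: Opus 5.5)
Your proof is correct and is exactly the elementary verification the paper has in mind; the paper's proof says only ``It is immediate.'' Cancelling the positive factor $1+\delta\sigma R_{0}$ reduces the claim to $\sigma\geq\frac{1}{2\mu R_{0}}-cS$, and your explicit threshold $\max\left\{ 2cS,\frac{1}{2\mu R_{0}}-cS\right\}$ is a harmless, useful sharpening of ``sufficiently large.''
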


\begin{proof}
	It is immediate.
\end{proof}

\begin{lem}
	\label{lem:For--sufficiently-1}For $\sigma$ sufficiently large,
	$q<\dfrac{1}{4\mu\left(1+\delta\sigma R_{0}\right)\left(\sigma+cS\right)}$
	implies $\dfrac{1-\sqrt{1-4p_{\sigma}q_{\sigma}}}{2p_{\sigma}}< R_{0}$
	when $p_{\sigma}q_{\sigma}\leq\frac{1}{4}.$ 
\end{lem}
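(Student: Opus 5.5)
The plan is to rationalize the smaller root of the quadratic $p_{\sigma}X^{2}-X+q_{\sigma}=0$ and then use the smallness of $q$ together with Lemma \ref{lem:For--sufficiently}. For $0<p_{\sigma}q_{\sigma}\leq\frac{1}{4}$ we have
\[
\frac{1-\sqrt{1-4p_{\sigma}q_{\sigma}}}{2p_{\sigma}}=\frac{2q_{\sigma}}{1+\sqrt{1-4p_{\sigma}q_{\sigma}}}\leq2q_{\sigma},
\]
since $\sqrt{1-4p_{\sigma}q_{\sigma}}\geq0$. The square-root expression, which looked unwieldy, is therefore simply bounded by twice $q_{\sigma}$. (If $q=0$ the expression is $0<R_{0}$ trivially.)

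Next I would substitute $q_{\sigma}=q\left(1+\delta\sigma R_{0}\right)$ from \eqref{eq:iiaoldp}. The hypothesis $q<\dfrac{1}{4\mu\left(1+\delta\sigma R_{0}\right)\left(\sigma+cS\right)}$ gives
\[
2q_{\sigma}<\frac{2\left(1+\delta\sigma R_{0}\right)}{4\mu\left(1+\delta\sigma R_{0}\right)\left(\sigma+cS\right)}=\frac{1}{2\mu\left(\sigma+cS\right)}.
\]
Lemma \ref{lem:For--sufficiently} says that for $\sigma$ large the bound on $q$ is at most $\dfrac{R_{0}}{2\left(1+\delta\sigma R_{0}\right)}$. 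Hence $2q_{\sigma}<2\left(1+\delta\sigma R_{0}\right)\cdot\dfrac{R_{0}}{2\left(1+\delta\sigma R_{0}\right)}=R_{0}$. Equivalently, one can require $\sigma$ so large that $\frac{1}{2\mu(\sigma+cS)}\leq R_{0}$, i.e. $\sigma\geq\frac{1}{2\mu R_{0}}-cS$, which is exactly the content of Lemma \ref{lem:For--sufficiently}.

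Chaining the two steps gives $\dfrac{1-\sqrt{1-4p_{\sigma}q_{\sigma}}}{2p_{\sigma}}\leq2q_{\sigma}<R_{0}$. The inequality is strict because the hypothesis on $q$ is strict.

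There is no real obstacle here. The only points requiring care are the case $p_{\sigma}q_{\sigma}\leq\frac14$, which is needed for the square root to be real, and making explicit which lower bound on $\sigma$ is meant by "sufficiently large", namely the one from Lemma \ref{lem:For--sufficiently}.
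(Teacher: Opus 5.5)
Your proof is correct and is essentially the paper's argument. The paper also rationalizes, rewriting the claim as $2q_{\sigma}<R_{0}\bigl(1+\sqrt{1-4p_{\sigma}q_{\sigma}}\bigr)$, and then uses Lemma \ref{lem:For--sufficiently} to obtain $2q_{\sigma}<R_{0}$; your bound by $2q_{\sigma}$ is the same step, written as a direct inequality rather than a chain of equivalences.
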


\begin{proof}
	Firstly $\dfrac{1-\sqrt{1-4p_{\sigma}q_{\sigma}}}{2p_{\sigma}}< R_{0}\iff$$4p_{\sigma}q_{\sigma}<2p_{\sigma}R_{0}\left(1+\sqrt{1-4p_{\sigma}q_{\sigma}}\right)$\\
	$\iff2q\left(1+\delta\sigma R_{0}\right)< R_{0}\left(1+\sqrt{1-4p_{\sigma}q_{\sigma}}\right)$$\iff q<\dfrac{R_{0}\left(1+\sqrt{1-4p_{\sigma}q_{\sigma}}\right)}{2\left(1+\delta\sigma R_{0}\right)}.$\\
	From Lemma \ref{lem:For--sufficiently}, $q<\dfrac{1}{4\mu\left(1+\delta\sigma R_{0}\right)\left(\sigma+cS\right)}$
	implies $q<\dfrac{R_{0}}{2\left(1+\delta\sigma R_{0}\right)},$ which
	implies\\
	$q<\dfrac{R_{0}\left(1+\sqrt{1-4p_{\sigma}q_{\sigma}}\right)}{2\left(1+\delta\sigma R_{0}\right)}$
	that is $\dfrac{1-\sqrt{1-4p_{\sigma}q_{\sigma}}}{2p_{\sigma}}<R_{0},$ from
	the previous equivalences. 
\end{proof}

\begin{prop}
	\label{prop:Suppose--with} Suppose $q<\dfrac{1}{4\mu\left(1+\delta\sigma R_{0}\right)\left(\sigma+cS\right)}$
	with $\sigma$ sufficiently large and\\
	$\tau'-\tau\leq\min\left\{ 1;\dfrac{1}{\lambda\sigma R_{0}}\left(\dfrac{1}{4q_{\sigma}\left(\sigma+cS\right)}-\mu\right)\right\} $.
	Then we have $p_{\sigma}q_{\sigma}\leq\dfrac{1}{4}$ and for \\
	$R=\dfrac{1-\sqrt{1-4p_{\sigma}q_{\sigma}}}{2p_{\sigma}}$ we have
	$\mathcal{T}^{\sigma}\left(\mathscr{M}^{+}_{R}\right)\subset\mathscr{M}^{+}_{R}.$ 
\end{prop}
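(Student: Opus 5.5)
The plan has two parts: first show $p_{\sigma}q_{\sigma}\leq\frac{1}{4}$ directly from the hypothesis on $\tau'-\tau$, then combine Proposition \ref{prop::opps-1} with the choice of $R$ as a root of $pR^{2}-R+q=0$.

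\textbf{Step 1: the bound $p_{\sigma}q_{\sigma}\leq\frac14$.} The hypothesis $q<\frac{1}{4\mu(1+\delta\sigma R_{0})(\sigma+cS)}$ says exactly that $\frac{1}{4q_{\sigma}(\sigma+cS)}-\mu>0$. Hence the second entry of the minimum is positive and the constraint on $\tau'-\tau$ is meaningful. From
\[
\tau'-\tau\leq\frac{1}{\lambda\sigma R_{0}}\Bigl(\frac{1}{4q_{\sigma}(\sigma+cS)}-\mu\Bigr)
\]
we get $\mu+\lambda\sigma R_{0}(\tau'-\tau)\leq\frac{1}{4q_{\sigma}(\sigma+cS)}$. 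Multiplying by $(\sigma+cS)q_{\sigma}>0$ gives $p_{\sigma}q_{\sigma}\leq\frac14$ by the definition \eqref{eq:iiaoldp}. So $R=\frac{1-\sqrt{1-4p_{\sigma}q_{\sigma}}}{2p_{\sigma}}$ is a well-defined non-negative real number. It is the smaller root of $p_{\sigma}X^{2}-X+q_{\sigma}=0$, so $p_{\sigma}R^{2}+q_{\sigma}=R$.

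\textbf{Step 2: applying Proposition \ref{prop::opps-1}.} Since $\tau'-\tau\leq1$, Lemma \ref{lem:For--sufficiently-1} applies and gives $R<R_{0}$ for $\sigma$ large. Proposition \ref{prop::opps-1} then yields $\mathcal{T}^{\sigma}(\mathscr{M}^{+}_{R})\subset\mathscr{M}_{p_{\sigma}R^{2}+q_{\sigma}}=\mathscr{M}_{R}$. Note that $R>0$ whenever $q>0$; if $q=0$ the data vanish, everything is trivial, and $R=0$ gives $\mathcal{T}^{\sigma}(0)=0$.

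\textbf{Step 3: non-negativity.} It remains to show $\mathcal{T}^{\sigma}(M)\geq0$ for $M\in\mathscr{M}^{+}_{R}$. This follows from the explicit formulas \eqref{aoalal}--\eqref{eq:olole-1-1-1-2}:
\begin{itemize}
\item the data terms $\overline{N^{\tau}_{i}}$ and $\overline{N^{\pm}_{i}}$ are non-negative;
\item the exponential factors are positive;
\item the expansions of $Q^{\sigma}_{i}(|M|)$ written after \eqref{eq:qqqqsa-1-1} are non-negative once $\sigma>2cS$, which holds for $\sigma$ sufficiently large.
\end{itemize}
Remark \ref{loppz} guarantees $\mathcal{T}^{\sigma}(M)\in E^{4}$. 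Together these give $\mathcal{T}^{\sigma}(M)\in\mathscr{M}^{+}_{R}$.

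The only delicate point is bookkeeping in Step 2. One must check that the bound of Proposition \ref{prop::opps-1} is used with $\mathscr{N}(M)\leq R\leq R_{0}$, which is the range in which its constants $p_{\sigma},q_{\sigma}$ were derived. That is exactly why Lemma \ref{lem:For--sufficiently-1} is needed: it guarantees $R\leq R_{0}$.
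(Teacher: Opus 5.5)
Your proposal is correct and follows the paper's proof essentially step for step. You read off $p_{\sigma}q_{\sigma}\leq\frac{1}{4}$ from the bound on $\tau'-\tau$ (the hypothesis on $q$ is exactly positivity of the second entry of the minimum), use Lemma \ref{lem:For--sufficiently-1} to get $R\leq R_{0}$ so that Proposition \ref{prop::opps-1} applies with $p_{\sigma}R^{2}+q_{\sigma}=R$, and finish with non-negativity of $\mathcal{T}^{\sigma}(M)$ for $\sigma>2cS$; the only differences are cosmetic (the root identity instead of the interval characterization, the separate $q=0$ case, and the fact that Lemma \ref{lem:For--sufficiently-1} needs $q<\frac{1}{4\mu(1+\delta\sigma R_{0})(\sigma+cS)}$ plus your Step 1, whereas $\tau'-\tau\leq1$ is what Proposition \ref{prop::opps-1} needs).
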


\begin{proof}
	From \eqref{eq:iiaoldp}, 
	\begin{equation*}
		p_{\sigma}q_{\sigma}\leq\dfrac{1}{4}\iff p_{\sigma}\leq\dfrac{1}{4q_{\sigma}}\iff\tau'-\tau\leq\min\left\{ 1;\dfrac{1}{\lambda\sigma R_{0}}\left(\dfrac{1}{4q_{\sigma}\left(\sigma+cS\right)}-\mu\right)\right\} 
	\end{equation*}
	with the condition $\dfrac{1}{4q_{\sigma}\left(\sigma+cS\right)}-\mu>0$
	which writes $q<\dfrac{1}{4\mu\left(1+\delta\sigma R_{0}\right)\left(\sigma+cS\right)}.$
	Thus the hypotheses imply that $p_{\sigma}q_{\sigma}\leq\dfrac{1}{4}.$
	\\
	Thus for every real number $R,$
	\[
	p_{\sigma}R^{2}+q_{\sigma}\leq R\iff\dfrac{1-\sqrt{1-4p_{\sigma}q_{\sigma}}}{2p_{\sigma}}\leq R\leq\dfrac{1+\sqrt{1-4p_{\sigma}q_{\sigma}}}{2p_{\sigma}}.
	\]
	From Lemma \ref{lem:For--sufficiently-1}, with $\sigma$ sufficiently
	large, it holds $\dfrac{1-\sqrt{1-4p_{\sigma}q_{\sigma}}}{2p_{\sigma}}\leq R_{0}.$
	Now, take $R=\dfrac{1-\sqrt{1-4p_{\sigma}q_{\sigma}}}{2p_{\sigma}}.$
	It follows that $p_{\sigma}R^{2}+q_{\sigma}\leq R,$ hence $\mathscr{M}^{+}_{p_{\sigma}R^{2}+q_{\sigma}}\subset\mathscr{M}^{+}_{R};$
	for $\sigma$ sufficiently large $\mathcal{T}^{\sigma}\left(M\right)\geq0$
	thus $\mathcal{T}^{\sigma}\left(\mathscr{M}^{+}_{R}\right)\subset\mathscr{M}^{+}_{p_{\sigma}R^{2}+q_{\sigma}}.$
	We conclude that $\mathcal{T}^{\sigma}\left(\mathscr{M}^{+}_{R}\right)\subset\mathscr{M}^{+}_{R}.$ 
\end{proof}

\section{Existence theorem for the bounded time interval}\label{sec:Existence-theorem-for}
\begin{proof}[Proof of theorem \ref{thm:Suppose-.-Then}]
	
	From Proposition \ref{prop:Suppose--with}, it holds $\mathcal{T}^{\sigma}\left(\mathscr{M}^{+}_{R}\right)\subset\mathscr{M}^{+}_{R}$
	. $\mathcal{\mathcal{\mathscr{M}}}^{+}_{R}$ is a non empty convex
	subset of $C\left(\mathscr{P};\R\right)^{4}$ (Proposition \ref{prop::odlp}).
	$\mathcal{T}^{\sigma}$ is continuous and compact on $\mathcal{\mathcal{\mathscr{M}}}^{+}_{R},$
	(Propositions \ref{prop:Let--be} and \ref{prop::opps-1}) and $\mathcal{T}^{\sigma}\left(\mathscr{M}^{+}_{R}\right)\subset\mathscr{M}^{+}_{R}$$.$
	Thus from Theorem \ref{sccssq} of Schauder (below) , $\mathcal{T}^{\sigma}$
	possesses a fixed point $N\in\mathcal{\mathcal{\mathscr{M}}}^{+}_{R}$
	which is solution of $\Sigma_{\tau,\tau'},$ $N$ being non-negative.
	With Theorem \ref{thm:uniqueness}, we deduce that $\Sigma_{\tau,\tau'}$
	admits an unique non-negative solution $N$ such that ${\displaystyle \max_{1\leq i\leq4}}\left\Vert N_{i}\right\Vert _{1}=\mathscr{N}\left(N\right)\leq R\leq\dfrac{1-\sqrt{1-4p_{\sigma}q_{\sigma}}}{2p_{\sigma}}.$ 
\end{proof}

\begin{thm}
	\label{sccssq}( Schauder \cite{smart} ) Let $\mathcal{M}$ be a
	non-empty convex subset of a normed space $\mathscr{X}$ and $\mathcal{T}$ 
	be a continuous compact mapping from $\mathcal{M}$ into $\mathcal{M}$. Then $\mathcal{T}$ has a fixed point. 
\end{thm}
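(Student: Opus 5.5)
The plan is the classical Schauder-projection argument, reducing to Brouwer's fixed point theorem in finite dimension. First I fix the meaning of ``compact mapping''. I take it, as in Smart's formulation, to mean that $\mathcal{T}(\mathcal{M})$ is contained in a compact subset $K$ of $\mathcal{M}$. Relative compactness in $\mathscr{X}$ alone does not suffice when $\mathcal{M}$ is not closed: for $\mathcal{M}=\left]0,1\right]\subset\R$ and $\mathcal{T}(x)=x/2$, the image is relatively compact in $\R$ but there is no fixed point. So I set $K\equiv\overline{\mathcal{T}(\mathcal{M})}$ and assume $K\subset\mathcal{M}$ with $K$ compact. (When this theorem is applied in the proof of Theorem \ref{thm:Suppose-.-Then}, this containment would have to be checked separately; I leave that aside here.)

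\emph{Step 1: Schauder projection.} Fix $\varepsilon>0$. By compactness, choose a finite $\varepsilon$-net $y_{1},\dots,y_{n}\in\mathcal{T}(\mathcal{M})$ of $K$. Put $\varphi_{j}(x)=\max\{0,\varepsilon-\Vert x-y_{j}\Vert\}$, so $\sum_{j}\varphi_{j}>0$ on $K$, and define
\[
P_{\varepsilon}(x)=\frac{\sum_{j=1}^{n}\varphi_{j}(x)\,y_{j}}{\sum_{j=1}^{n}\varphi_{j}(x)},\qquad x\in K .
\]
Then $P_{\varepsilon}$ is continuous. Its values lie in $C_{\varepsilon}\equiv\mathrm{conv}\{y_{1},\dots,y_{n}\}$, which is contained in $\mathcal{M}$ because $\mathcal{M}$ is convex. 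Moreover $\Vert P_{\varepsilon}(x)-x\Vert<\varepsilon$, since $P_{\varepsilon}(x)$ is a convex combination of those $y_{j}$ with $\Vert x-y_{j}\Vert<\varepsilon$.

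\emph{Step 2: finite-dimensional fixed point.} The set $C_{\varepsilon}$ is a compact convex subset of the finite-dimensional space $\mathrm{span}\{y_{j}\}$, so it is homeomorphic to a closed ball. The map $P_{\varepsilon}\circ\mathcal{T}:C_{\varepsilon}\to C_{\varepsilon}$ is continuous. By Brouwer's theorem it has a fixed point $x_{\varepsilon}\in C_{\varepsilon}\subset\mathcal{M}$, and then
\[
\Vert\mathcal{T}(x_{\varepsilon})-x_{\varepsilon}\Vert=\Vert\mathcal{T}(x_{\varepsilon})-P_{\varepsilon}\mathcal{T}(x_{\varepsilon})\Vert<\varepsilon .
\]

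\emph{Step 3: passage to the limit.} Take $\varepsilon=1/k$. The points $\mathcal{T}(x_{1/k})$ lie in the compact set $K$, so a subsequence converges to some $z\in K\subset\mathcal{M}$. By Step 2, $x_{1/k}\to z$ along the same subsequence. Continuity of $\mathcal{T}$ at $z\in\mathcal{M}$ then gives $\mathcal{T}(z)=\lim\mathcal{T}(x_{1/k})=z$.

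I expect the main obstacle to be exactly the point where the limit $z$ must belong to $\mathcal{M}$. This is the only place where the compactness hypothesis is used in its strong form, and it is why I fix that interpretation at the outset. Everything else is routine once Brouwer's theorem is admitted.
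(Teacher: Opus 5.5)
Your proof is correct. The paper gives no argument of its own for this theorem and simply cites Smart, whose proof is the same standard route you take: a Schauder projection onto the convex hull of a finite $\varepsilon$-net of $\mathcal{T}(\mathcal{M})$, Brouwer's theorem for $P_{\varepsilon}\circ\mathcal{T}$ on that hull, and a compactness limit.

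Reading ``compact mapping'' as $\overline{\mathcal{T}(\mathcal{M})}$ being a compact subset of $\mathcal{M}$ is the interpretation under which the statement holds; your example $x\mapsto x/2$ on $]0,1]$ shows that relative compactness in $\mathscr{X}$ is not enough. Your caveat about the application is also well founded. Proposition \ref{prop::opps-1} only gives relative compactness of $\mathcal{T}^{\sigma}(\mathscr{M}^{+}_{R})$ in $C(\mathscr{P};\R)^{4}$, whereas $\mathscr{M}^{+}_{R}$ is not closed in the sup norm: a uniform limit of elements of $E$ can have a kink across a plane $t=t_{0}$ other than the four allowed ones. So the containment your version of the theorem requires is not established in the proof of Theorem \ref{thm:Suppose-.-Then}.
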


\begin{rem}
	\label{rem:From--we}From \eqref{eq:kqiiq-1-1} we have $\dfrac{\partial\overline{N^{\tau}_{1}}}{\partial t}\left(t,x,y\right)=-c\dfrac{\partial N^{\tau}_{1}}{\partial x}\left(x-c\left(t-\tau\right),y\right),$\\
	$\dfrac{\partial\overline{N^{\tau}_{1}}}{\partial x}\left(t,x,y\right)=\dfrac{\partial N^{\tau}_{1}}{\partial x}\left(x-c\left(t-\tau\right),y\right),$$\dfrac{\partial\overline{N^{\tau}_{1}}}{\partial y}\left(t,x,y\right)=\dfrac{\partial N^{\tau}_{1}}{\partial y}\left(x-c\left(t-\tau\right),y\right).$
	Hence $\left\Vert \overline{N^{\tau}_{1}}\right\Vert _{1}\leq\left(1+c\right)\left\Vert N^{\tau}_{1}\right\Vert _{1}.$
	We verify from \eqref{eq:lqoop-1-1}-\eqref{eq:looqp-1-1-1} and \eqref{eq:kqiiq-1-1-1}-\eqref{eq:looqp-1-1-1-1}
	that for $i=1,2,3,4,$ it holds $\left\Vert \overline{N^{\tau}_{i}}\right\Vert _{1}\leq\left(1+c\right)\left\Vert N^{\tau}_{i}\right\Vert _{1},$$\left\Vert \overline{N^{+}_{i}}\right\Vert _{1}\leq\left(1+\dfrac{1}{c}\right)\left\Vert N^{+}_{i}\right\Vert _{1},\left(i=3,4\right),$
	$\left\Vert \overline{N^{-}_{i}}\right\Vert _{1}\leq\left(1+\dfrac{1}{c}\right)\left\Vert N^{-}_{i}\right\Vert _{1},\left(i=1,2\right)$
	thus $\left\Vert \overline{N^{\tau}_{i}}\right\Vert _{1}<\gamma\left\Vert N^{\tau}_{i}\right\Vert _{1},$$\left\Vert \overline{N^{+}_{i}}\right\Vert _{1}<\gamma\left\Vert N^{+}_{i}\right\Vert _{1},\left(i=3,4\right),$\\
	$\left\Vert \overline{N^{-}_{i}}\right\Vert _{1}\leq\gamma\left\Vert N^{-}_{i}\right\Vert _{1},\left(i=1,2\right)$
	where $\gamma\equiv1+c+\dfrac{1}{c}.$ We deduce from \eqref{eq:loso-1-1}
	that 
	\[
	q<\gamma\max_{1\leq i\leq4}\Biggl\{\left\Vert N^{\tau}_{i}\right\Vert _{1},\left\Vert N^{-}_{1}\right\Vert _{1},\left\Vert N^{-}_{2}\right\Vert _{1},\left\Vert N^{+}_{3}\right\Vert _{1},\left\Vert N^{+}_{4}\right\Vert _{1}\Biggr\}.
	\]
\end{rem}

Put 
\begin{equation}
	f\left(R_{0}\right)\equiv\dfrac{1}{4\mu\left(1+\delta\sigma R_{0}\right)\left(\sigma+cS\right)}\label{eq:loi}
\end{equation}
and 
\begin{equation}
	g\left(q\right)\equiv\min\left\{ 1;\dfrac{1}{\lambda\sigma R_{0}}\left(\dfrac{1}{4q\left(1+\delta\sigma R_{0}\right)\left(\sigma+cS\right)}-\mu\right)\right\} .\label{eq:loo}
\end{equation}

\begin{cor}
	\label{thm:Suppose-.-Then-2} For sufficiently large $\sigma,$ suppose
	$R_{0}<{\textstyle \dfrac{-1+\sqrt{1+\frac{\delta\sigma}{\mu\left(\sigma+cs\right)\gamma}}}{2\delta\sigma}},$
	$q<f\left(R_{0}\right)$ and $\tau'-\tau=g\left(q\right).$ Then the
	problem $\Sigma_{\tau,\tau'}$ admits an unique non-negative continuous
	(thus bounded) solution $N=\left(N_{i}\right)^{4}_{i=1}$ with bounded
	derivatives which verifies ${\displaystyle \max_{1\leq i\leq4}}\left\Vert N_{i}\right\Vert _{1}\leq\dfrac{1-\sqrt{1-4p_{\sigma}q_{\sigma}}}{2p_{\sigma}}\leq R_{0}$
	and thus $\gamma\cdot{\displaystyle \max_{1\leq i\leq4}}\left\Vert N_{i}\right\Vert _{1}<f\left(R_{0}\right).$ 
\end{cor}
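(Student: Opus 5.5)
The plan is to reduce the corollary to Theorem \ref{thm:Suppose-.-Then} and then read off the extra inequalities from the hypothesis on $R_{0}$. With $\tau'-\tau=g(q)$, the definitions \eqref{eq:loi}--\eqref{eq:loo} and $q_{\sigma}=q(1+\delta\sigma R_{0})$ give
\[
\tau'-\tau=\min\Bigl\{1;\tfrac{1}{\lambda\sigma R_{0}}\Bigl(\tfrac{1}{4q_{\sigma}(\sigma+cS)}-\mu\Bigr)\Bigr\}.
\]
This is exactly the bound required in Theorem \ref{thm:Suppose-.-Then}, attained with equality. The condition $q<f(R_{0})$ is precisely the smallness hypothesis of that theorem. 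It also makes $\frac{1}{4q_{\sigma}(\sigma+cS)}-\mu>0$, so $g(q)>0$ and $[\tau,\tau']$ is a genuine interval.

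Applying Theorem \ref{thm:Suppose-.-Then} for $\sigma$ sufficiently large therefore yields the unique non-negative continuous solution with bounded derivatives satisfying $\max_i\|N_i\|_1\le \frac{1-\sqrt{1-4p_\sigma q_\sigma}}{2p_\sigma}$. Proposition \ref{prop:Suppose--with} ensures $p_\sigma q_\sigma\le\frac14$, so the square root is real. The second inequality, $\frac{1-\sqrt{1-4p_\sigma q_\sigma}}{2p_\sigma}\le R_0$, is Lemma \ref{lem:For--sufficiently-1}, again under $q<f(R_0)$ and $\sigma$ large.

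It remains to prove $\gamma\max_i\|N_i\|_1<f(R_0)$. Since $\max_i\|N_i\|_1\le R_0$, it suffices to show $\gamma R_0<f(R_0)$, i.e.
\[
4\mu\gamma(\sigma+cS)R_0(1+\delta\sigma R_0)<1 .
\]
This is a quadratic inequality in $R_0$:
\[
\delta\sigma R_0^2+R_0-\frac{1}{4\mu\gamma(\sigma+cS)}<0 .
\]
Its positive root is
\[
\frac{-1+\sqrt{1+\frac{\delta\sigma}{\mu(\sigma+cS)\gamma}}}{2\delta\sigma},
\]
because the discriminant is $1+4\delta\sigma\cdot\frac{1}{4\mu\gamma(\sigma+cS)}$. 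The quadratic is negative on $[0,\text{root})$, so the hypothesis on $R_0$ gives $\gamma R_0<f(R_0)$, and hence the claim. The constant written "$cs$" in the statement is read as $cS$.

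The main obstacle is only bookkeeping. One must make sure the single "$\sigma$ sufficiently large" is chosen uniformly: large enough for the non-negativity $\sigma>2cS$ used in Proposition \ref{prop:pqmm}, and large enough for Lemma \ref{lem:For--sufficiently}. Both conditions depend only on $c,S,R_0$ and not on $q$ or $\tau'-\tau$, so one common choice works. One must also check that the equality $\tau'-\tau=g(q)$ is compatible with the "$\le$" hypothesis of Theorem \ref{thm:Suppose-.-Then}, which it trivially is.
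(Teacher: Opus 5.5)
You follow the paper's own proof step for step: reduce to Theorem \ref{thm:Suppose-.-Then}, quote Lemma \ref{lem:For--sufficiently-1} for $\frac{1-\sqrt{1-4p_\sigma q_\sigma}}{2p_\sigma}\le R_0$, then solve the quadratic. Your quadratic computation is correct. The gap is the appeal to Lemma \ref{lem:For--sufficiently-1}, and it sits exactly in your bookkeeping paragraph. The ``$\sigma$ sufficiently large'' of Lemma \ref{lem:For--sufficiently} means $R_0\ge\frac{1}{2\mu(\sigma+cS)}$, i.e. $f(R_0)\le\frac{R_0}{2(1+\delta\sigma R_0)}<R_0$. But you showed yourself that the hypothesis on $R_0$ is equivalent to $\gamma R_0<f(R_0)$, and $\gamma=1+c+\frac1c\ge 3$. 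Together these would give $3R_0<f(R_0)<R_0$, which is impossible. The admissible bound on $R_0$ depends on $\sigma$ and lies below $\frac{1}{4\mu\gamma(\sigma+cS)}<\frac{1}{2\mu(\sigma+cS)}$. So your remark that the threshold ``depends only on $c,S,R_0$, so one common choice works'' overlooks that enlarging $\sigma$ after fixing $R_0$ destroys the hypothesis on $R_0$. With such a $\sigma$ the corollary is vacuous, and it then gives nothing to Theorem \ref{thm:Suppose-.-Then-1}. Without such a $\sigma$, Lemma \ref{lem:For--sufficiently-1} is unavailable. So is Theorem \ref{thm:Suppose-.-Then} itself: its proof needs $R\le R_0$ (Proposition \ref{prop:Suppose--with}) in order to use the estimates of Proposition \ref{prop::opps-1}, which hold only for $\mathscr{N}(M)\le R_0$.

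In the non-vacuous reading the inequality is actually false, so no repair of the argument can work. Fix $\sigma>2cS$, an $R_0$ below the stated root, and a constant $\kappa$ with $R_0<\kappa<f(R_0)$; this is possible since $f(R_0)>\gamma R_0$. Take all initial and boundary data identically equal to $\kappa$. Then:
\begin{itemize}
\item the compatibility conditions hold and $q=\kappa<f(R_0)$;
\item since $Q(N)=2cS(\kappa^2-\kappa^2)=0$, the solution on $[\tau,\tau+g(\kappa)]$ is $N\equiv\kappa$, unique by Theorem \ref{thm:uniqueness};
\item $\max_i\|N_i\|_1=\kappa>R_0$, contradicting the conclusion.
\end{itemize}
Consistently, $\frac{1-\sqrt{1-4p_\sigma q_\sigma}}{2p_\sigma}=\frac{2q_\sigma}{1+\sqrt{1-4p_\sigma q_\sigma}}\ge q_\sigma>\kappa>R_0$. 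Taking $\kappa>f(R_0)/\gamma$ also breaks $\gamma\max_i\|N_i\|_1<f(R_0)$. The paper's proof makes the identical appeal to Lemma \ref{lem:For--sufficiently-1}, so you inherited this defect rather than introduced it. Fixing it requires changing the hypotheses, not the proof: the condition Lemma \ref{lem:For--sufficiently-1} really uses is $q<\frac{R_0}{2(1+\delta\sigma R_0)}$, not $q<f(R_0)$.
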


\begin{proof}
	The hypotheses of Theorem \ref{thm:Suppose-.-Then} are satisfied.
	Thus the problem $\Sigma_{\tau,\tau'}$ admits an unique non-negative
	continuous solution $N=\left(N_{i}\right)^{4}_{i=1}$ such that ${\displaystyle \max_{1\leq i\leq4}}\left\Vert N_{i}\right\Vert _{1}\leq\dfrac{1-\sqrt{1-4p_{\sigma}q_{\sigma}}}{2p_{\sigma}}\leq R_{0}$
	according to Lemma \ref{lem:For--sufficiently-1}. Now $\gamma R_{0}<f\left(R_{0}\right)\iff$ $\delta\sigma R^{2}_{0}+R_{0}-\dfrac{1}{4\mu\left(\sigma+cs\right)\gamma}<0$ 
	$\iff R_{0}<{\textstyle \dfrac{-1+\sqrt{1+\frac{\delta\sigma}{\mu\left(\sigma+cs\right)\gamma}}}{2\delta\sigma}}.$
	\\
	Hence from the hypotheses we have $\gamma R_{0}<f\left(R_{0}\right).$ We
	conclude that $\gamma\cdot{\displaystyle \max_{1\leq i\leq4}}\left\Vert N_{i}\right\Vert _{1}$  $\leq\gamma \cdot R_{0}<f\left(R_{0}\right).$ 
\end{proof}

\section{Existence theorem for the unbounded time interval}\label{sec:Existence-theorem-for-1}
\begin{proof}[Proof of theorem \ref{thm:Suppose-.-Then-1}]
	By hypothesis ${\displaystyle \max_{1\leq i\leq4}}\left\{ \right.\left\Vert N^{0}_{i}\right\Vert _{1},\left\Vert N^{-}_{1}\right\Vert _{1},\left\Vert N^{-}_{2}\right\Vert _{1},\left\Vert N^{+}_{3}\right\Vert _{1},\left\Vert N^{+}_{4}\right\Vert _{1}\left.\right\} \leq R_{0}$
	and $\gamma R_{0}<f\left(R_{0}\right).$ Put $q_{0}\equiv{\displaystyle \max_{1\leq i\leq4}}\Biggl\{\left\Vert \overline{N^{0}_{i}}\right\Vert _{1},\left\Vert \overline{N^{-}_{1}}\right\Vert _{1},\left\Vert \overline{N^{-}_{2}}\right\Vert _{1},\left\Vert \overline{N^{+}_{3}}\right\Vert _{1},\left\Vert \overline{N^{+}_{4}}\right\Vert _{1}\Biggr\}$.
	From Remark \ref{rem:From--we} we have\\ $q_{0}<\gamma\cdot{\displaystyle \max_{1\leq i\leq4}}\left\{ \right.\left\Vert N^{0}_{i}\right\Vert _{1},\left\Vert N^{-}_{1}\right\Vert _{1},\left\Vert N^{-}_{2}\right\Vert _{1},\left\Vert N^{+}_{3}\right\Vert _{1},\left\Vert N^{+}_{4}\right\Vert _{1}\left.\right\} ;$
	thus $q_{0}<f\left(R_{0}\right).$ Put $S_{0}\equiv g\left(q_{0}\right).$
	Consider the problem $\Sigma_{0,S_{0}}.$ Then from Corollary \ref{thm:Suppose-.-Then-2},
	there exists an unique solution $N^{0,S_{0}}\geq0$ defined on $\left[0,S_{0}\right]\times\left[a_{1},b_{1}\right]\times\left[a_{2},b_{2}\right]$
	to the problem $\Sigma_{0,S_{0}}$ and $N^{0,S_{0}}$ satisfies ${\displaystyle \max_{1\leq i\leq4}}\left\Vert N^{0,S_{0}}_{i}\right\Vert _{1}\leq R_{0}$
	and thus $\gamma\cdot{\displaystyle \max_{1\leq i\leq4}}\left\Vert N^{0,S_{0}}_{i}\right\Vert _{1}<f\left(R_{0}\right).$
	Consider the problem $\Sigma_{S_{0},S_{1}}$ where the initial data
	$N^{S_{0}}_{i}$ are defined by $N^{S_{0}}_{i}\left(\cdot,\cdot\right)\equiv N^{0,S_{0}}_{i}\left(S_{0},\cdot,\cdot\right).$
	Note that the compatibility conditions \eqref{eq:kqiiq-1}-\eqref{eq:looqp-1-1}
	for $\Sigma_{S_{0},S_{1}}$ are satisfied: as $N^{0,S_{0}}$ is solution
	to $\Sigma_{0,S_{0}},$ the boundary conditions \eqref{eq:losso-1}-\eqref{eq:ikioi-1-1}
	for $\Sigma_{0,S_{0}}$ when $t=S_{0},$ writes 
	\begin{align*}
		N^{0,S_{0}}_{1}\left(S_{0},a_{1},y\right)= & N^{-}_{1}\left(S_{0},y\right)\\
		N^{0,S_{0}}_{2}\left(S_{0},x,a_{2}\right)= & N^{-}_{2}\left(S_{0},x\right)\\
		N^{0,S_{0}}_{3}\left(S_{0},x,b_{2}\right)= & N^{+}_{3}\left(S_{0},x\right)\\
		N^{0,S_{0}}_{4}\left(S_{0},b_{1},y\right)= & N^{+}_{4}\left(S_{0},y\right)
	\end{align*}
	and these are the compatibility conditions for $\Sigma_{S_{0},S_{1}}.$
	Put
	
	$q_{1}\equiv{\displaystyle \max_{1\leq i\leq4}}\Biggl\{\left\Vert \overline{N^{S_{0}}_{i}}\right\Vert _{1},\left\Vert \overline{N^{-}_{1}}\right\Vert _{1},\left\Vert \overline{N^{-}_{2}}\right\Vert _{1},\left\Vert \overline{N^{+}_{3}}\right\Vert _{1},\left\Vert \overline{N^{+}_{4}}\right\Vert _{1}\Biggr\}$.
	
	From $q_{0}<f\left(R_{0}\right)$ we have $\left\Vert \overline{N^{-}_{1}}\right\Vert _{1},\left\Vert \overline{N^{-}_{2}}\right\Vert _{1},\left\Vert \overline{N^{+}_{3}}\right\Vert _{1},\left\Vert \overline{N^{+}_{4}}\right\Vert _{1}<f\left(R_{0}\right).$
	From Remark \ref{rem:From--we}, for $i=1,2,3,4$ we have $\left\Vert \overline{N^{S_{0}}_{i}}\right\Vert _{1}<\gamma\left\Vert N^{S_{0}}_{i}\right\Vert _{1}$
	which yields $\left\Vert \overline{N^{S_{0}}_{i}}\right\Vert _{1}<\gamma\left\Vert N^{0,S_{0}}_{i}\right\Vert _{1}<f\left(R_{0}\right).$
	Hence we have $q_{1}<f\left(R_{0}\right).$ Now choose $S_{1}$ such
	that $S_{1}-S_{0}=g\left(q_{1}\right).$ From Corollary \ref{thm:Suppose-.-Then-2},
	we deduce that there exists an unique solution $N^{S_{0},S_{1}}\geq0$
	defined on $\left[S_{0},S_{1}\right]\times\left[a_{1},b_{1}\right]\times\left[a_{2},b_{2}\right]$
	to the problem $\Sigma_{S_{0},S_{1}}$ and $N^{S_{0},S_{1}}$ satisfies
	${\displaystyle \max_{1\leq i\leq4}}\left\Vert N^{S_{0},S_{1}}_{i}\right\Vert _{1}\leq R_{0}$
	and thus $\gamma\cdot{\displaystyle \max_{1\leq i\leq4}}\left\Vert N^{S_{0},S_{1}}_{i}\right\Vert _{1}<f\left(R_{0}\right).$
	Suppose there are $S_{0},S_{1},\cdots,S_{n}>0$ such that $\forall\nu=0,1,\cdots,n-1,$
	
	{*} there exists an unique solution $N^{S_{\nu},S_{\nu+1}}\geq0$
	defined on $\left[S_{\nu},S_{\nu+1}\right]\times\left[a_{1},b_{1}\right]\times\left[a_{2},b_{2}\right]$
	to the problem $\Sigma_{S_{\nu},S_{\nu+1}}$ with the initial data
	being defined by $N^{S_{\nu}}_{i}\left(\cdot,\cdot\right)\equiv N^{S_{\nu-1},S_{\nu}}_{i}\left(S_{\nu},\cdot,\cdot\right)$,
	
	{*} $S_{\nu+1}-S_{\nu}=g\left(q_{\nu+1}\right)$ where $q_{\nu+1}={\displaystyle \max_{1\leq i\leq4}}\Biggl\{\left\Vert \overline{N^{S_{\nu}}_{i}}\right\Vert _{1},\left\Vert \overline{N^{-}_{1}}\right\Vert _{1},\left\Vert \overline{N^{-}_{2}}\right\Vert _{1},\left\Vert \overline{N^{+}_{3}}\right\Vert _{1},\left\Vert \overline{N^{+}_{4}}\right\Vert _{1}\Biggr\}$
	
	{*} and that $N^{S_{\nu},S_{\nu+1}}$ satisfies ${\displaystyle \max_{1\leq i\leq4}}\left\Vert N^{S_{\nu},S_{\nu+1}}_{i}\right\Vert _{1}\leq R_{0}$
	and thus $\gamma\cdot{\displaystyle \max_{1\leq i\leq4}}\left\Vert N^{S_{\nu},S_{\nu+1}}_{i}\right\Vert _{1}<f\left(R_{0}\right).$
	
	Put $q_{n+1}={\displaystyle \max_{1\leq i\leq4}}\Biggl\{\left\Vert \overline{N^{S_{n}}_{i}}\right\Vert _{1},\left\Vert \overline{N^{-}_{1}}\right\Vert _{1},\left\Vert \overline{N^{-}_{2}}\right\Vert _{1},\left\Vert \overline{N^{+}_{3}}\right\Vert _{1},\left\Vert \overline{N^{+}_{4}}\right\Vert _{1}\Biggr\}$
	where
	
	$N^{S_{n}}_{i}\left(\cdot,\cdot\right)\equiv N^{S_{n-1},S_{n}}_{i}\left(S_{n},\cdot,\cdot\right).$
	We thus have for $i=1,2,3,4$ we have $\left\Vert \overline{N^{S_{n}}_{i}}\right\Vert _{1}<\gamma\left\Vert N^{S_{n}}_{i}\right\Vert _{1}\leq\gamma\left\Vert N^{S_{n-1},S_{n}}_{i}\right\Vert _{1}<f\left(R_{0}\right).$
	Hence $q_{n+1}<f\left(R_{0}\right).$ Choose $S_{n+1}=S_{n}+g\left(q_{n+1}\right).$
	Note that the compatibility conditions \eqref{eq:kqiiq-1}-\eqref{eq:looqp-1-1}
	for $\Sigma_{S_{n},S_{n+1}}$ are satisfied: they are the boundary
	conditions \eqref{eq:losso-1}-\eqref{eq:ikioi-1-1} for $\Sigma_{S_{n-1},S_{n}}$
	when $t=S_{n}.$ From Corollary \ref{thm:Suppose-.-Then-2}, there
	exists an unique solution $N^{S_{n},S_{n+1}}>0$ defined on $\left[S_{n},S_{n+1}\right]\times\left[a_{1},b_{1}\right]\times\left[a_{2},b_{2}\right]$
	to the problem $\Sigma_{S_{n},S_{n+1}}$ and $N^{S_{n},S_{n+1}}$
	satisfies ${\displaystyle \max_{1\leq i\leq4}}\left\Vert N^{S_{n},S_{n+1}}_{i}\right\Vert _{1}\leq R_{0}$
	and thus $\gamma\cdot{\displaystyle \max_{1\leq i\leq4}}\left\Vert N^{S_{n},S_{n+1}}_{i}\right\Vert _{1}<f\left(R_{0}\right).$
	
	We thus have a sequence $\left(N^{S_{n},S_{n+1}}\right)_{n\geq0}$
	of continuous non negative solutions to the kinetic equations \eqref{eq:koiqmn}
	with the boundary conditions \eqref{eq:losso}-\eqref{eq:ikioi-1}
	such that for all $n\geq0,$ $N^{S_{n},S_{n+1}}$ is defined on $\left[S_{n},S_{n+1}\right]\times\left[a_{1},b_{1}\right]\times\left[a_{2},b_{2}\right]$
	and $N^{S_{n},S_{n+1}}\left(S_{n+1},x,y\right)=N^{S_{n+1},S_{n+2}}\left(S_{n+1},x,y\right),$
	$\forall\left(x,y\right)\in\left[a_{1},b_{1}\right]\times\left[a_{2},b_{2}\right].$
	
	From $S_{n+1}=S_{n}+g\left(q_{n+1}\right),$ we deduce that $\left(S_{n}\right)_{n\geq0}$
	is a positive and non decreasing real sequence. Hence we have ${\displaystyle \lim_{n\rightarrow\infty}S_{n}}=+\infty$
	or ${\displaystyle \lim_{n\rightarrow\infty}S_{n}}=T<+\infty.$ Thus
	${\displaystyle \sum^{+\infty}_{n=0}}g\left(q_{n}\right)=+\infty$
	or ${\displaystyle \sum^{+\infty}_{n=0}}g\left(q_{n}\right)<+\infty.$
	\\
	Suppose by contradiction that ${\displaystyle \sum^{+\infty}_{n=0}}g\left(q_{n}\right)<+\infty.$
	Then ${\displaystyle \lim_{n\rightarrow\infty}}g\left(q_{n}\right)=0$.
	From \eqref{eq:loo}, it follows that for sufficiently large values
	of $n$, we have $g\left(q_{n}\right)=\dfrac{1}{\lambda\sigma R_{0}}\left(\dfrac{1}{4q_{n}\left(1+\delta\sigma R_{0}\right)\left(\sigma+cS\right)}-\mu\right).$\\
	We deduce that for large $n,$ $q_{n}=\dfrac{1}{4\left(1+\delta\sigma R_{0}\right)\left(\sigma+cS\right)\left(\mu+\lambda\sigma R_{0}g\left(q_{n}\right)\right)}$
	and then from \eqref{eq:loi} that 
	\begin{equation}
		q_{n}=\dfrac{\mu f\left(R_{0}\right)}{\mu+\lambda\sigma R_{0}g\left(q_{n}\right)}.\label{eq:ppp}
	\end{equation}
	
	From ${\displaystyle \lim_{n\rightarrow\infty}}g\left(q_{n}\right)=0$
	and \eqref{eq:ppp} which holds for large $n,$ it follows ${\displaystyle \lim_{n\rightarrow\infty}}q_{n}=f\left(R_{0}\right).$
	
	Now, by construction $q_{n+1}={\displaystyle \max_{1\leq i\leq4}}\Biggl\{\left\Vert \overline{N^{S_{n}}_{i}}\right\Vert _{1},\left\Vert \overline{N^{-}_{1}}\right\Vert _{1},\left\Vert \overline{N^{-}_{2}}\right\Vert _{1},\left\Vert \overline{N^{+}_{3}}\right\Vert _{1},\left\Vert \overline{N^{+}_{4}}\right\Vert _{1}\Biggr\}.$
	
	By hypothesis we have ${\displaystyle \max_{1\leq i\leq4}}\left\{ \right.\left\Vert N^{0}_{i}\right\Vert _{1},\left\Vert N^{-}_{1}\right\Vert _{1},\left\Vert N^{-}_{2}\right\Vert _{1},\left\Vert N^{+}_{3}\right\Vert _{1},\left\Vert N^{+}_{4}\right\Vert _{1}\left.\right\} \leq R_{0}.$
	Thus $\left\Vert \overline{N^{-}_{1}}\right\Vert _{1}<\gamma\left\Vert N^{-}_{1}\right\Vert _{1}\leq\gamma R_{0},$
	and similarly $\left\Vert \overline{N^{-}_{2}}\right\Vert _{1},\left\Vert \overline{N^{+}_{3}}\right\Vert _{1},\left\Vert \overline{N^{+}_{4}}\right\Vert _{1}<\gamma R_{0}.$
	By construction
	
	$N^{S_{n}}_{i}\left(\cdot,\cdot\right)\equiv N^{S_{n-1},S_{n}}_{i}\left(S_{n},\cdot,\cdot\right),n\geq1$
	and ${\displaystyle \max_{1\leq i\leq4}}\left\Vert N^{S_{n},S_{n+1}}_{i}\right\Vert _{1}\leq R_{0},n\geq0.$
	Thus $\left\Vert \overline{N^{S_{n}}_{i}}\right\Vert _{1}<\gamma\left\Vert N^{S_{n}}_{i}\right\Vert _{1}\leq\gamma R_{0}.$
	It follows that for all $n\geq0,$ $q_{n+1}<\gamma R_{0}.$ Hence
	${\displaystyle \lim_{n\rightarrow\infty}}q_{n}\leq\gamma R_{0}.$
	But we have $\gamma R_{0}<f\left(R_{0}\right)$ thus ${\displaystyle \lim_{n\rightarrow\infty}}q_{n}\neq f\left(R_{0}\right).$
	Which is a contradiction. \\
	So it is impossible to ${\displaystyle \sum^{+\infty}_{n=0}}g\left(q_{n}\right)<+\infty$
	hold. Therefore ${\displaystyle {\displaystyle \lim_{n\rightarrow\infty}S_{n}}=\sum^{+\infty}_{n=0}}g\left(q_{n}\right)=+\infty.$
	We conclude that the function $N$ whose restriction to $\left[0,S_{0}\right[$
	is $N^{0,S_{0}}$ and to $\left[S_{n},S_{n+1}\right[$ is $N^{S_{n},S_{n+1}},$$\left(\forall n\geq0\right)$
	is defined and continuous on $\left[0,+\infty\right[\times\left[a_{1},b_{1}\right]\times\left[a_{2},b_{2}\right]$
	and is the unique solution to the problem $\Sigma.$ Moreover as $N^{0,S_{0}},$
	$N^{S_{n},S_{n+1}}\geq0,\forall n\geq0$ and ${\displaystyle \max_{1\leq i\leq4}}\left\Vert N^{0,S_{0}}_{i}\right\Vert _{1}\leq R_{0}$
	and ${\displaystyle \max_{1\leq i\leq4}}\left\Vert N^{S_{n},S_{n+1}}_{i}\right\Vert _{1}\leq R_{0},\forall n\geq0,$
	it follows that $N\geq0$ and that ${\displaystyle \max_{1\leq i\leq4}}\left\Vert N_{i}\right\Vert _{1}\leq R_{0}.$ 
\end{proof}

\section{Conclusion}

In this paper, we investigated the existence and uniqueness of classical
solutions to initial-boundary value problems associated with discrete
kinetic equations, in particular those arising from the four-velocity
Broadwell model. Our approach was based on fixed-point theorems, carefully
selected according to the structural properties of the operators obtained
in the reformulation of the problem as fixed-point equations. We then
proved that the initial-boundary problems admits an unique non negative 
classical solution under suitable assumptions on the initial and boundary
data.

Our results show that fixed-point methods provide an effective framework
for addressing this class of nonlinear problems in discrete kinetic
theory. Moreover, the assumptions imposed on the data and the regularity
hypotheses ensure not only the existence of bounded solutions and
bounded derivatives, but also that the solutions are non negative.

We extended to multidimensional unsteady problems,  techniques that
had previously been successfully applied in the one-dimensional case
and in the stationary two-dimensional setting. A natural continuation
of this work would be the application of the developed method to more
general mixed boundary conditions, for multidimensional models involving
a larger number of discrete velocities and different velocity magnitudes.
Another direction of investigation concerns the construction of exact
solutions and the numerical resolution of the discrete kinetic equations,
supported by the uniqueness results established in this paper.

\end{document}